\newtheorem{mthm}{Theorem}
\newtheorem{thm}{Theorem}[section]
\newtheorem{lem}[thm]{Lemma}
\newtheorem{prop}[thm]{Proposition}
\theoremstyle{definition}
\newtheorem{rem}[thm]{Remark}
\newtheorem*{rem*}{Remark}
\newtheorem{defn}[thm]{Definition}
\newtheorem{notn}[thm]{Notation}
\newtheorem{example}[thm]{Example}
\def\cC{\mathcal{C}}
\def\cL{\ensuremath{\mathcal{L}}}
\def\cO{\mathcal{O}}
\def\cX{\mathcal{X}}
\def\cm{\mathfrak{m}}
\def\bA{{\mathbb A}}
\def\bC{{\mathbb C}}
\def\bG{{\mathbb G}}
\def\bK{{\mathbb K}}
\def\bN{{\mathbb N}}
\def\bP{{\mathbb P}}
\def\bQ{{\mathbb Q}}
\def\bR{{\mathbb R}}
\def\bZ{{\mathbb Z}}
\def\sxymat{\xymatrix@C=1.5ex@R=2ex}
\def\ubK{\underline{\bK}}
\newcommand{\lr}[1]{\langle{#1}\rangle}
\DeclareMathOperator{\Span}{Span}
\DeclareMathOperator{\Spec}{Spec}
\DeclareMathOperator{\Proj}{Proj}
\DeclareMathOperator{\sst}{sst}
\DeclareMathOperator{\ST}{ST}
\DeclareMathOperator{\Sym}{Sym}
\DeclareMathOperator{\Hom}{Hom}
\DeclareMathOperator{\red}{red}
\DeclareMathOperator{\id}{id}
\DeclareMathOperator{\ord}{ord}
\DeclareMathOperator{\Bl}{Bl}
\DeclareMathOperator{\Star}{Star}
\DeclareMathOperator{\supp}{supp}
\DeclareMathOperator{\IC}{IC}
\DeclareMathOperator{\Stab}{Stab}
\DeclareMathOperator{\conv}{conv}
\DeclareMathOperator{\wt}{wt}
\DeclareMathOperator{\class}{cl}
\DeclareMathOperator{\cell}{cell}
\newcommand{\cupdot}{\mathbin{\mathaccent\cdot\cup}}
\def\tensor{\otimes}
\def\tto{\twoheadrightarrow}
\def\oST{\overline{\ST}}
\def\mmod{/\!/}
\newcommand{\map}[1]{\stackrel{#1}{\longrightarrow}}
\newcommand{\un}[1]{\underline{#1}}
\def\dar{\ar@<-0.5ex>[r]\ar@<0.5ex>[r]}
\def\tto{\twoheadrightarrow}
\definecolor{lightgray}{gray}{0.9}
\definecolor{darkgreen}{RGB}{0,100,0}
\definecolor{darkblue}{RGB}{0,0,180}
\definecolor{darkred}{RGB}{210,0,0}
\definecolor{gb}{RGB}{0,100,180}
\numberwithin{equation}{section}
\title{On the moment measure conjecture}%
\author[J.~Heinloth]{Jochen Heinloth}
\address{Universität Duisburg-Essen, Essen, Germany}
\email{jochen.heinloth@uni-due.de}
\author[X.~Zhang]{Xucheng Zhang}
\address{Yau Mathematical Sciences Center, Tsinghua University, Beijing, China}
\email{zhangxucheng@mail.tsinghua.edu.cn}
\begin{document}
\begin{abstract}
	The moment measure conjecture of Bia\l ynicki-Birula and Sommese gives a combinatorial characterization of all open substacks of a global quotient stack for a torus action on a normal projective variety that admit a proper good moduli space, in other words it characterizes the invariant open subvarieties that admit a proper quotient. In this article we prove the conjecture for actions on smooth varieties. This gives an instance of stability conditions defined by cohomological invariants that are not given by Chern classes of line bundles.  
\end{abstract}

\maketitle

\section{Introduction}

The existence result for moduli spaces of algebraic stacks \cite{ahlh} gives a new motivation for the problem to find methods, that single out those open substacks of a given algebraic stack that admit proper moduli spaces. One instance where a conjectural answer to this problem is known is the case of global quotient stacks $[X/T]$ that arise from a torus acting on a projective variety, as  this is the content of the moment measure conjecture of  Bia\l ynicki-Birula and Sommese. This case in particular covers examples of quotient spaces that are proper but not projective and therefore cannot be obtained from a GIT construction. In this article we prove a version of this conjecture using a cohomological interpretation of the combinatorial data of moment measures. In contrast to GIT, in which the numerical criterion depends on the Chern class of a line bundle, i.e., an element of the second cohomology of the stack, our interpretation of moment measures is given in terms of (co)homology classes in the degree corresponding to the top cohomological degree of the moduli space. In short, we observe that any open subset of a smooth algebraic stack that admits a proper coarse moduli space is uniquely determined by the linear algebra datum of a one dimensional quotient defined by the top cohomology group of the moduli space and show that this description can be used effectively to classify such substacks in the example of torus quotients.

The first version of the moment measure conjecture \cite[Conjecture (1.3)]{BBS-conjecture} of Bia\l ynicki-Birula and Sommese proposes an explicit combinatorial characterization of the invariant open subsets of a normal projective variety $X$ equipped with a torus action, that admit a proper geometric quotient. Bia\l ynicki-Birula and Sommese also formulate a version for good quotients \cite[Conjecture (1.4)]{BBS-conjecture} (see also \cite{BBQuotientsSurvey} for a survey on these conjectures). The sufficiency of the conditions was known to the authors when they formulated the conjecture, so the main point is to show that the combinatorial criterion is indeed necessary. 

Before formulating the conjecture, the authors had proved the result for $1$-dimen\-sion\-al tori \cite[Main Theorem]{BBS-QuotientsRk1} 
and for geometric quotients of $2$-dimensional tori acting meromorphically on a compact Kähler manifold \cite[(3.1) Main Theorem]{BBS-rank2}. Later Bia\l ynicki-Birula and Świ\polhk{e}cicka \cite{BBSwiecickaReciepe},  Bia\l ynicki-Birula \cite{BBMomentMeasureProjective} and Hausen \cite{HausenCompleteAffine} proved that the conjecture holds for actions on affine spaces, projective spaces, and affine varieties. 

To formulate our result let $T=\bG_m^r$ be a torus over an algebraically closed field, acting on a smooth proper variety $X$. We assume that there exists a $T$-linearized line bundle $\cL$ on $X$ that restricts to an ample bundle on all orbit closures of closed points (\Cref{def:positiveL}, \Cref{rem:projective_orbits}). 

From this datum Bia\l ynicki-Birula and Sommese defined a cell complex $\cC(X)$, whose $i$-dimensional cells record which {collections of} fixed point {components} can appear in the closure of $i$-dimensional $T$-orbits of closed points (see \Cref{sec:moment_complex}). Their conjecture states that the invariant open subsets $U \subseteq X$ which admit proper geometric and proper good quotients can be described combinatorially in terms of functions $m\colon \cC(X) \to \{0,1\}$ called (geometric) moment measures (see \Cref{def:MomentMeasure}). 
A moment measure defines an invariant open subset $U\subseteq X$ consisting of those points $x$ such that the orbit closure $\overline{T.x}$ contains an orbit on whose cell the moment measure takes value $1$.  

One reason why this conjecture is surprising, is that the set of points of $X$ that define the same cell, i.e., the set of points whose orbit closure intersects the same collection of fixed point {components}, can be disconnected and it is unclear why stability conditions do not depend on the connected components of these subsets. A second point is that although the definition of the moment complex $\cC(X)$ is built from data coming from orbit closures, one of the combinatorial conditions includes cell decompositions that do not have to arise geometrically from a degeneration of an orbit closure. 

The main result of the article is the following theorem.

\begin{mthm}[Conjecture of Bia\l ynicki-Birula--Sommese]\label{thm:GeometricQuotients}
	Let $X$ be a proper smooth variety over an algebraically closed field equipped with a torus action $T\times X \to X$ that admits a line bundle $\cL$ that is ample on $\bG_m$-orbit closures of closed points. 
	
	A $T$-invariant open subscheme $U\subseteq X$ admits a proper geometric quotient if and only if it is defined by a geometric moment measure on the moment complex $\cC(X)$.
	
	A $T$-invariant open subscheme $U\subseteq X$ admits a proper good quotient if and only if it is defined by a moment measure on the moment complex $\cC(X)$.
\end{mthm}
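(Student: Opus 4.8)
The plan is to prove the two equivalences separately. The sufficiency of a (good, resp.\ geometric) moment measure is essentially the direction already understood by Bia\l ynicki-Birula and Sommese, so the new content is the necessity, for which I would use a cohomological reformulation of the combinatorial data. For sufficiency, given a moment measure $m$ one forms the associated invariant open $U=U(m)\subseteq X$ and studies the stack $[U/T]$, which is of finite type with affine diagonal since $X$ is separated and $T$ is a torus. By the existence theorem of \cite{ahlh}, $[U/T]$ admits a separated good moduli space $M$ as soon as it is $\Theta$-reductive and S-complete, and $M$ is proper once $[U/T]$ additionally satisfies the existence part of the valuative criterion for properness; all three properties are tested on one-parameter subgroups $\lambda\colon\bG_m\to T$ and on $R$-points of $\Theta=[\bA^1/\bG_m]$ and of the S-completeness test stack of \cite{ahlh}. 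Because the hypothesis on $\cL$ makes the closure of every $\bG_m$-orbit of a closed point a projective toric variety, the relevant limits always exist inside $\overline{T.x}$, and whether a given degeneration stays in $U(m)$ and whether its limiting closed orbit is unique can be read off the combinatorics of $\cC(X)$: the ``completeness''-type axioms for $m$ give $\Theta$-reductivity together with properness of $M$, the ``separatedness''-type axioms give S-completeness together with separatedness of $M$, and for a geometric moment measure the strengthened axiom forces the stabilizer of every closed point of $[U(m)/T]$ to be finite, so that the good quotient is in fact geometric.

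For necessity, suppose $U\subseteq X$ is $T$-invariant open and $\pi\colon[U/T]\to M$ is a proper good moduli space. The decisive input, highlighted in the introduction, is that an open substack of a smooth stack admitting a proper coarse space is pinned down by a one-dimensional quotient living in the top cohomological degree of that space; concretely $U$ is recovered from its \emph{polystable locus} $P\subseteq U$, the union of the $T$-orbits that are closed in $[U/T]$, equivalently the preimage under $\pi$ of the closed points of $M$. Indeed, properness of $M$ forces $\overline{T.x}\cap P\neq\emptyset$ for every closed point $x\in U$; conversely, if $x\in X$ is a closed point with $\overline{T.x}\cap P\neq\emptyset$ then $U\cap\overline{T.x}$ is a non-empty, hence dense, open subset of the irreducible projective variety $\overline{T.x}$, so it meets the dense orbit $T.x$, and $T$-invariance of $U$ gives $x\in U$. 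Thus $U$ is exactly the set of closed points $x\in X$ with $\overline{T.x}\cap P\neq\emptyset$, and the task reduces to recognising $P$ inside the moment complex.

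Here the reconstruction principle enters decisively: it says that $U$ depends only on a function $m\colon\cC(X)\to\{0,1\}$ of combinatorial types, and this is the place where the surprising disconnectedness of type-strata is accounted for, since the one-dimensional top-cohomology datum that determines $U$ cannot distinguish the connected components of a single stratum. Concretely one sets $m(\sigma)=1$ when a generic orbit of combinatorial type $\sigma$ is closed in $[U/T]$; the substance --- and the main obstacle --- is to prove that this value is independent of the connected component of the type-$\sigma$ locus and of the choice of orbit within the type. Given this, $U=U(m)$ follows from the two inclusions above: $U(m)\subseteq U$ by the density argument applied to an orbit of a type with $m=1$, and $U\subseteq U(m)$ because properness of $M$ places a polystable orbit, lying on a cell with $m=1$, in the closure of every closed point of $U$. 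It then remains to check that $m$ satisfies the axioms of \Cref{def:MomentMeasure}, which comes out by translating the properties of $M$: the existence part of the valuative criterion becomes the axiom that every cell has a face on which $m=1$; separatedness of $M$ becomes the axiom excluding the configurations in which an orbit admits two distinct polystable degenerations; and the subtler axiom involving subdivisions of cells that need not be realised by an actual degeneration of an orbit closure is matched with $\Theta$-reductivity and S-completeness of $[U/T]$, detected by maps out of $\Theta$ and the test stack of \cite{ahlh}, whose behaviour is governed by the polytopes of orbit closures and their coherent subdivisions --- again using the top-degree rigidity to see that the required inequalities persist for the non-geometric subdivisions. Finally, the dichotomy of the statement is part of the same bookkeeping: $M$ is a geometric quotient exactly when every polystable stabilizer is finite, which is exactly the condition distinguishing geometric moment measures from general ones.
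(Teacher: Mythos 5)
Your outline of the sufficiency direction matches the paper's (check $\Theta$-reductivity, $S$-completeness and the existence criterion of \cite{ahlh} via the combinatorics of subdivisions of cells), and your reduction of necessity to recognising the polystable locus is also the right frame. But the proposal has a genuine gap exactly at the point you yourself flag as ``the substance and the main obstacle'': you never prove that whether an orbit of a given cell $c$ lies in $U$ (resp.\ is closed in $U$) is independent of the choice of orbit within the type-$c$ locus and of its connected component. This is not a routine verification that can be deferred; it is the entire content of the necessity direction, and nothing in your proposal supplies it. The paper's mechanism is to attach to each orbit closure its equivariant cycle class in $H^{*}([X/T],\bK)$, to prove via localization to the fixed-point components and an explicit computation of cycle classes of orbit closures in linear representations that $s_x\cdot\class_{[X/T]}([\overline{T.x}/T])$ depends only on $\cell(x)$ (\Cref{prop:CycleClassByCell}), and then to prove that $x\in U$ if and only if this class restricts to a nonzero element of $H^*([U/T],\bK)$ (\Cref{prop:nontrivclass}). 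The second step is itself delicate for good (non-geometric) quotients: it uses that the intersection cohomology of $U\mmod T$ is a direct summand of $H^*([U/T],\bK)$ (the decomposition-theorem input of the appendix), Kirwan-type injectivity of Gysin maps for fixed-point strata of positive-dimensional stabilizers, and a Bia\l ynicki-Birula stratification argument. Your appeal to ``a one-dimensional quotient in top cohomological degree'' gestures at this but constructs none of it, and in the good-quotient case the top cohomology of $[U/T]$ is not even one-dimensional, which is precisely why the extra machinery is needed.

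A second, related gap is the verification of the subdivision axiom for subdivisions that are not realised by degenerations of orbit closures. You say this is ``matched with $\Theta$-reductivity and $S$-completeness \dots using the top-degree rigidity,'' but those valuative criteria only see geometric degenerations; they cannot by themselves control a formal subdivision of a cell into other cells of $\cC(X)$ that never arises from a specialization. The paper handles this by proving that the cycle classes of cells are \emph{additive under arbitrary subdivisions} (\Cref{lem:AddCycleClass}), which is a purely local statement about cones of weights in tangent representations at fixed points (\Cref{lem:AdditiveCones}) and holds whether or not the subdivision is geometric; combined with the fact that for each maximal-dimensional cell the restricted class is either $0$ or the fixed generator of the top cohomology, additivity forces exactly one cell of any subdivision to have $m=1$. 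For non-maximal cells and genuine good quotients one additionally needs the star-of-a-cell bookkeeping and the saturated (Kirwan) blow-up to reduce to the maximal-dimensional case. Without these ingredients the proposal does not close.
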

\begin{rem*} Our positivity assumption on the line bundle is more general than in the original formulation of the conjecture. This is useful as in the case of moduli problems it is often easier to prove positivity on the rational lines defined by one parameter degenerations than to check a global condition like ampleness. For example, we will apply this in joint work of L. Halle, K. Hulek and the first author, to explain how  the construction of \cite{GHH-HilbertScheme} works for non-projective degenerations.
\end{rem*}
\begin{rem*}
The original formulation of the conjecture allows for normal instead of smooth varieties, probably because this would facilitate an inductive approach to the conjecture, which we avoid. As moment complexes behave well under blow-ups, we expect that in characteristic $0$ existence of equivariant resolution could be used to reduce the normal case to the case of a smooth variety.  
\end{rem*}

Our strategy to construct a moment measure from an open subset $U$ is the following: The closure of any $T$-orbit $\overline{T.x} \subseteq X$ defines an equivariant cohomological cycle class $$\class_{[X/T]}([\overline{T.x}/T]) \in H^{2 (\dim [X/T]-\dim [T.x/T])}([X/T],\bK),$$
where cohomology can either be taken as singular cohomology with rational coefficients $\bK=\bQ$ if $k\subseteq \bC$ or as étale cohomology with coefficients in $\bK=\bQ_\ell$ for some prime $\ell\neq \text{char}(k)$.

If $U\subseteq X$ admits a proper geometric quotient $U\mmod T$, the cohomology group in top degree $H^{2\dim(U\mmod T)}(U\mmod T,\bK)$ is 1-dimensional and in this case the cycle class of an orbit closure of maximal dimension will restrict to a non-zero class in $$H^{2\dim(U\mmod T)}([U/T],\bK)\cong H^{2\dim(U\mmod T)}(U\mmod T,\bK)$$ if and only if the orbit closure intersects $U$. To obtain a moment measure from this datum, we observe that we can alternatively compute the cycle class in the cohomology of the quotient stack $H^{2\dim(U\mmod T)}([X/T],\bK)$ by equivariant localization to the fixed points of the torus action. In this description it turns out that the cycle class of an orbit closure is uniquely  determined by its cell (\Cref{prop:CycleClassByCell}). The corresponding moment measure can thus be defined as the function selecting those cells that define a cohomology class in $H^*([X/T],\bK)$ that restricts to a generator of the top cohomology of the moduli space.

For good quotients a similar strategy works, i.e., points of $U$ can still be detected as those points whose cycle class does not restrict to $0$ in $H^*([U/T],\bK)$, but in this case we have to consider cycle classes of orbits of different dimensions, that occur in different cohomological degrees. Moreover in this case, due to positive dimensional stabilizers of points in $U$ the cohomology group in degree $2\dim [U/T]$ is no longer one dimensional and thus not isomorphic to the corresponding cohomology group of the good quotient. This complicates the argument for good quotients.

\subsection*{Acknowledgments} The approach to the problem was found during the special semester on Moduli and Algebraic Cycles at the Mittag-Leffler Institute. JH would like to thank the organizers John Cristian Ottem, Dan Petersen and David Rydh for the inspiring semester and the Mittag Leffler Institute for the excellent environment. JH would also like to thank Daniel Greb for discussions about the conjecture and Marc Levine for many discussions on equivariant geometry and cycles. We would like to thank our referees for very helpful comments and suggestions. Part of this work was supported by the DFG-Research Training Group 2553. 
The authors have no conflict of interest to declare. The article has no associated data.

\subsection*{Notation and conventions} We will work over an algebraically closed field $k$. 
  We fix a torus $T=\bG_m^r$. We write $X^*(T)=\Hom(T,\bG_m) \cong \bZ^r$ for the character lattice of $T$, $X_*(T):=\Hom(\bG_m,T)$ for the cocharacter lattice and $$\lr{\quad,\quad}\colon X^*(T) \times X_*(T) \to \bZ$$ for the natural pairing of characters and cocharacters. We will write $X^*(T)_\bR:=X^*(T)\tensor_{\bZ}\bR$ and $X^*(T)_\bQ:=X^*(T)\tensor_{\bZ}\bQ$ for the real and rational vector spaces defined by the character lattice.
 
 We will write $H^*(\un{\quad},\bK)$ to denote either étale cohomology with $\bQ_\ell$-coefficients with $\ell\neq \text{char}(k)$ if $k\not\subseteq \bC$ or  singular cohomology with $\bQ$-coefficients if $k\subseteq \bC$. 
 
 Given a $T$-action on a proper variety $X$, a cocharacter $\lambda\colon \bG_m \to T$ and $x\in X(k)$ a closed point we write $\overline{\lambda.x}\colon \bP^1 \to X$ for the completion of the orbit map $\lambda.x\colon \bG_m \to X$ given by $t\mapsto \lambda(t).x$. We write $\lim_{t\to 0} \lambda(t).x$ (resp. $\lim_{t\to \infty} \lambda(t).x$) for the point defined by evaluation of $\overline{\lambda.x}$ at $0\in \bP^1$ (resp. $\infty\in \bP^1$).
 
\section{Recollection on moment measures}\label{sec:momentmeasure}	

In this section we recall the definition of moment measures and the moment complex of a variety with a torus action as defined by Bia\l ynicki-Birula and Sommese. As our main result allows for a slightly weaker hypothesis than projectivity of the underlying variety, we include proofs for the basic results on the structure of moment complexes that allow for this slightly weaker assumption. The arguments needed are however close to the original proofs and sometimes reduce to the projective situation.  

Throughout we will consider an action of the $r$-dimensional  torus $T=\bG_m^r$ on a proper, connected, normal variety $X$. 

\subsection{Recollection on the geometry of torus actions}
The closed subscheme of $T$-fixed points is denoted by $X^T\subseteq X$. We denote the set of connected components of $X^T$ by $\pi_0(X^T)$ and the decomposition into {connected components} by
$$X^T = \coprod_{i\in \pi_0(X^T)} F_i.$$

As closed orbits in $X$ are proper and the only proper quotients of $T$ are points, the only closed orbits of $T$ in $X$ are $T$-fixed points. If $X$ is smooth, the scheme of fixed points $X^T$ is smooth as well (\cite[Theorem 2.1]{BBactions}), so that in this case the connected components $F_i$ are smooth and irreducible.

By Sumihiro's theorem \cite[Corollary 2]{SumihiroEquivariantCompletion} any point $x\in X$ is contained in an affine open $T$-{invariant} neighborhood $U\subseteq X$, so we can find a closed $T$-equivariant embedding $U\hookrightarrow \bA^N$ into an affine space, equipped with a linear $T$-action. In particular the fixed points in $U$ are then the intersection of $U$ with the affine subspace $\bA^{n_0}\subseteq\bA^N$ on which $T$ acts trivially.  

This description allows to reduce computations of orbit closures to the case of linear actions on affine spaces. In particular any specialization of orbits can be realized by limits of cocharacters, i.e., if $x\in X(k)$ is a closed point and $T.x_1\subseteq \overline{T.x}$ is an orbit contained in the orbit closure, then there exists a cocharacter $\lambda\in X_*(T)$ such that $\lim_{t\to 0} \lambda(t).x\in T.x_1$. 

\subsection{Definition of the moment complex}\label{sec:moment_complex}

For any closed point $x\in X(k)$ the {\em cell} of $x$ is the subset
$$\cell(x):= \{ i\in \pi_0(X^T) \,|\, \overline{T.x} \cap F_i \neq \varnothing\}.$$
We will consider this subset as a $\dim (T.x)$-dimensional cell of a combinatorial cell complex. We will give a geometric realization of this cell in \Cref{prop:geometric_cells} (1). 

\begin{rem}
It will sometimes be convenient to extend this definition to all points of $X$, so for any field extension $K/k$  and $x_K\in X(K)$ we again denote the orbit closure of $x_K$ in $X_K:=X\times_k \Spec(K)$ by $\overline{T.x_K}\subseteq X_K$ and define $$\cell(x_K):= \{ i\in \pi_0(X^T) \,|\, \overline{T.x_K} \cap F_{i,K} \neq \varnothing\}.$$ As the set of points $x\in X$ whose cell is a given subset $c\subseteq \pi_0(X^T)$ is constructible, any cell of a $K$-point also appears as a cell of a closed point.
\end{rem}
 
\begin{defn}[Moment complex \cite{BBS-conjecture}]
	For a normal proper variety $X$ equipped with a torus action $T\times X \to X$, the {\em moment complex} $$\cC(X):= \{ c \subseteq \pi_0(X^T) \mid c=\cell(x) \text{ for some }x\in X\}$$ is the partially ordered set, given by the subsets of $\pi_0(X^T)$ that arise as  cells of points of $X$. 
	
	The {\em generic cell} is the cell defined by the generic point of $X$.  
	
	The {\em boundary} of a cell $c$ consists of those subsets $c^\prime \subsetneq c$ that happen to be cells. 
\end{defn}
The motivation to consider the set $\cC(X)$ and to equip it with the structure of a cell complex, stems from projective varieties $X$ over the complex numbers, where one can identify the cells with the moment polytopes of the orbit closures. To get such a description over general base fields it is convenient to describe the moment polytopes algebraically. We will recall this in the next section and adapt the argument to cover our slightly more general setup.

\subsection{Geometric realization of the moment complex}

Given a line bundle $\cL$ on $X$, the $T$-action on $X$ lifts to a power of $\cL$, i.e., some power of $\cL$ admits a $T$-linearization, because we assumed $X$ to be normal. This choice is moreover unique up to a choice of a character of $T$, because the difference of two linearizations defines a linearization of the trivial line bundle. Let $\cL$ be a fixed $T$-linearized line bundle on $X$. We will assume the following positivity condition on $\cL$, that is automatic if $\cL$ is ample.
\begin{defn}\label{def:positiveL}
	A line bundle $\cL$ on a proper $T$-variety $X$ is called {\em ample on $\bG_m$-orbit closures of closed points} if for all cocharacters $\lambda \in X_*(T)$ and closed points $x\in X$ such that $x$ is not a $\lambda$-fixed point, the pull back of $\cL$ along the morphism $\overline{\lambda.x} \colon \bP^1 \to X$ defined by the completion of the orbit map $\lambda.x$ is ample.
\end{defn}
\begin{rem}
This positivity condition already appears naturally when considering families $X\to S$ with a fibrewise torus action, where ampleness on the fibers and ampleness on $X$ differ. To apply a GIT construction, one would need to use an ample bundle on $X$, which may not exist if $S$ is a non-projective variety, although in the Hilbert-Mumford criterion of GIT only restrictions to orbit closure appear. This situation appears for example when one tries to use inductive arguments to reduce problems on torus quotients to quotients by one-dimensional tori.
\end{rem}

For our computations it will be helpful to translate this positivity property in terms of weights on fixed points. 

For any $T$-fixed point $x \in X^T$ we denote by $$\wt(\cL,x)\in X^*(T)=\Hom(T,\bG_m)$$ the character of $T$ given by the action of $T$ on the fiber $\cL_x$ of $\cL$ at $x$. In this article we will use the sign convention taking the weights of the action on the fiber of the sheaf of sections $\cL$, which is the one used in \cite{ahlh}. In \cite{heinloth-stability} we used the opposite sign convention taking weights on the geometric realization $L=\Spec (\Sym^\bullet \cL^\vee)$. Let us spell this out concretely in the basic example that is the key for computations.	
\begin{example}\label{ex:weight-degree}
	Let $\bG_m \times \bA^1 \to \bA^1$ be the standard action given on points by $t.a:=ta$. On the ring of functions $k[x_1]$ of $\bA^1=\Spec k[x_1]$ this is the action for which $x_1$ has weight $-1$. The action extends to $\bP^1=\Proj(k[x_0,x_1])$, such that the homogeneous coordinates $x_0$ and $x_1$ have weight $0$ and $-1$ respectively. For the induced linearization of line bundle $\cO(d)$  corresponding to homogeneous sections of degree $d$ we have $$\wt(\cO(d),0)=0, \wt(\cO(d),\infty)=-d$$
	because the fibers of $\cO(d)$ at $0$ and $\infty$ are generated by the global sections $x_0^d$ and $x_1^d$ respectively.
	Thus $$\deg(\cO(d))= \wt(\cO(d),0)-\wt(\cO(d),\infty).$$
	As any other linearization of $\cO(d)$ differs from the standard one by a character, this shows that for any $\bG_m$-linearized line bundle $\cL$ on $\bP^1$ the degree is determined by the weights:
	$$\deg(\cL)= \wt(\cL,0)-\wt(\cL,\infty).$$
\end{example}
Returning to our general setup, we now assume that $\cL$ is a $T$-linearized line bundle on $X$ that is ample on $\bG_m$-orbit closures of closed points. To give a geometric interpretation of cells of points note that the weight $\wt(\cL,\un{\quad})$ is constant on the connected components $F_i$ of $X^T$ and thus defines a map
\begin{align*}
	\wt_{\cL} \colon \pi_0(X^T) &\to X^*(T)\\
	i &\mapsto \wt_{\cL}(i):= \wt(\cL, x) \text{ for any }x\in F_i.
\end{align*}
For complex projective varieties and Kähler manifolds Bia\l ynicki-Birula and Sommese proved the following geometric interpretation of cells, using that in complex geometry the convex hull $$ |\cell(x)|:= \conv(\wt_{\cL}(\cell(x))) \subseteq X^*(T)_\bR,$$ is the moment polytope of the orbit and the moment map identifies the topological quotient $\overline{T.x}/(S^1)^r$ with the moment polytope. We will recall an algebraic argument for this result below. 
We will refer to the polytope $|\cell(x)|\subseteq X^*(T)_\bR$ as the {\em geometric realization} of $\cell(x)$ and write $\mathring{|\cell(x)|}$ for the relative interior of the polytope, i.e. the complement of the geometric realization of the proper faces of $|\cell(x)|$.

\begin{prop}\label{prop:geometric_cells}
	Let $\cL$ be a $T$-linearized line bundle on $X$ that is ample on $\bG_m$-orbit closures of closed points.
	\begin{enumerate}
		\item\label{item:polytope} For any point $x\in X$ the characters $\{\wt_{\cL}(i) \mid i\in \cell(x) \}$ are the vertices of the convex polytope $|\cell(x)|\subseteq X^*(T)_{\bR}$ and the real dimension of the polytope is equal to the algebraic dimension of the orbit $T.x\subseteq X$. 
		\item\label{item:subdivision} Let $R$ be a DVR with fraction field $K$ and residue field $R/\cm$ and $x_R\in X(R)$ an $R$-valued point and let $Z_R:= \overline{T.x_R} \subseteq X \times_k \Spec R$ be the closure of the $T$-orbit of $x_R$. Let $Z_0=\cup \overline{T.x_j}$ be the maximal dimensional $T$-orbits of the special fiber $Z_0:=Z_R\times_R \Spec(R/\cm)$. Then the geometric realizations of $|\cell(x_j)|$ define a subdivision of $|\cell(x_K)|$.  
	\end{enumerate}
\end{prop}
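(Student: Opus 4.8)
My plan is to identify $|\cell(x)|$ with the moment polytope of the orbit closure $\overline{T.x}$, and then to deduce the subdivision statement from the structure of one–parameter degenerations, after reducing to a polarized toric variety over $R$.

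\emph{Part (1).} I would set $Z:=\overline{T.x}$ and $\bar T:=T/\Stab(x)$, a torus of dimension $\dim T.x$ for which $Z$ is a (possibly non-normal) complete toric variety with dense orbit $T.x$, and pass to the normalization $\nu\colon\widetilde Z\to Z$, a finite birational $T$-equivariant morphism with $\widetilde Z$ a normal complete toric variety for $\bar T$. The first point is that $\nu^*\cL$ is ample on $\widetilde Z$: by the toric Nakai criterion it suffices to check $\deg(\nu^*\cL|_C)>0$ for every $T$-invariant curve $C\subseteq\widetilde Z$; writing $C$ as the closure of a one-dimensional orbit, choosing a closed point $y$ of that orbit and a cocharacter $\lambda$ mapping to a nonzero element of $X_*(T/\Stab(y))$, the map $\overline{\lambda.y}\colon\bP^1\to\widetilde Z$ is a finite cover of $C$ with $\nu\circ\overline{\lambda.y}=\overline{\lambda.\nu(y)}$ and $\nu(y)$ not $\lambda$-fixed, so \Cref{def:positiveL} and \Cref{ex:weight-degree} give $\deg(\overline{\lambda.y}^*\nu^*\cL)>0$, hence $\deg(\nu^*\cL|_C)>0$. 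Next, since $\Stab(x)$ acts trivially on $\widetilde Z$ it acts on $\nu^*\cL$ through a single character, so all fixed-point weights $\wt(\cL,\widetilde z)$ ($\widetilde z\in\widetilde Z^T$) lie in one coset of $X^*(\bar T)$ in $X^*(T)$; as $\nu^*\cL$ is ample, the moment polytope $P:=\conv\{\wt(\cL,\widetilde z)\mid\widetilde z\in\widetilde Z^T\}$ is full-dimensional in that coset, so $\dim P=\dim T.x$, and the vertices of $P$ are exactly the weights $\wt(\cL,\widetilde z)$, one per $T$-fixed point of $\widetilde Z$. Finally $\widetilde Z^T\to Z^T$ is surjective (any fibre is a nonempty proper $T$-scheme, hence has a fixed point) and $\wt_\cL$ is constant on components of $X^T$, so $\{\wt(\cL,\widetilde z)\}=\{\wt_\cL(i)\mid i\in\cell(x)\}$; a short argument with the vertex--fixed-point bijection shows $i\mapsto\wt_\cL(i)$ is injective on $\cell(x)$. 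Hence $|\cell(x)|=P$, its vertices are the $\wt_\cL(i)$ and $\dim|\cell(x)|=\dim T.x$.

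\emph{Part (2).} Here I would first reduce to the toric situation. Since $|\cell(x_K)|$ and the $|\cell(x_j)|$ depend, via Part (1), only on the corresponding orbit closures up to finite surjective morphisms, I may replace $Z_R$ by its normalization and assume $Z_R$ normal, flat over $R$, with generic fibre the normal complete toric variety $\widetilde Z_K$ whose moment polytope is $P:=|\cell(x_K)|$. By Part (1) (using that ampleness is insensitive to finite surjective maps) $\cL$ is ample on $\widetilde Z_K$ and on each component of the special fibre, hence on every fibre of $Z_R\to\Spec R$, hence --- as $\Spec R$ is affine --- ample on $Z_R$; so $Z_R\to\Spec R$ is a projective, flat, $T$-equivariant degeneration of the polarized toric variety $(\widetilde Z_K,\cL)$. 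Then I would extract the subdivision from this family: the $T$-stable $R$-lattice $H^0(Z_R,\cL^{\otimes n})\subseteq H^0(\widetilde Z_K,\cL^{\otimes n})=\bigoplus_{\chi\in nP\cap X^*(T)}K\cdot s_\chi$ is of the form $\bigoplus_\chi\pi^{a_n(\chi)}R\cdot s_\chi$; the functions $\tfrac1n a_n$ stabilize to a convex piecewise-linear function on $P$ whose maximal domains of linearity form a polyhedral subdivision of $P$, and comparing with the weight decomposition of $H^0(Z_0,\cL^{\otimes n})=H^0(Z_R,\cL^{\otimes n})\otimes_Rk$ and its restrictions to the components of $Z_0$ identifies the maximal cells of this subdivision with the moment polytopes $|\cell(x_j)|$ of the top-dimensional orbit closures of $Z_0$ (each such component being of dimension $\dim Z_K$ by flatness and toric, by the degeneration picture). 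This is exactly the assertion that $\{|\cell(x_j)|\}_j$ subdivides $|\cell(x_K)|$.

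The step I expect to be the main obstacle is this last one: reading off the polyhedral subdivision from the flat family over $R$ and matching its cells with the fixed-point incidences of the components of the special fibre. This is the classical theory of toric degenerations (Mumford; Gel'fand--Kapranov--Zelevinsky), but it has to be run under the weakened positivity hypothesis of \Cref{def:positiveL}, where $\cL$ is only assumed ample on $\bG_m$-orbit closures; so either one invokes that dictionary after the reduction to the projective toric situation above, or one reproves the part of it that is needed directly from the lattice computation $H^0(Z_R,\cL^{\otimes n})$.
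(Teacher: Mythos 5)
Your proposal is correct and follows the same two-step strategy as the paper --- first upgrade the hypothesis of \Cref{def:positiveL} to actual ampleness of $\cL$ on orbit closures, then read off the polytope and the subdivision from the polarized toric picture --- but the implementation of each step is genuinely different. For the first step, the paper proves ampleness on $\overline{T.x}$ via \Cref{lem:orbitproj}: Sumihiro's theorem gives the fan of limit cones in $X_*(T)$, and the degree formula of \Cref{ex:weight-degree} shows the support function $\lambda\mapsto\lr{\wt(\cL,\lim\lambda(t).x),\lambda}$ is strictly convex, whence ampleness by Fulton's criterion; you instead pass to the normalization and apply the toric Kleiman criterion, checking positivity on invariant curves directly from \Cref{def:positiveL}. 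These are two faces of the same computation (wall-crossing differences of the support function are exactly degrees on invariant curves), and both are complete; yours has the mild extra burden of the finite-surjective-descent facts for ampleness, while the paper's fan argument also yields part (1) en route. For the second step the paper avoids the toric-degeneration machinery you invoke: after reducing to $X\subseteq\bP(H^0(X,\cL))$ it writes $x_K=[x_\chi]$ in projective coordinates and observes that the support of the specialization of a translate $\lambda(\pi')t_{R'}.x_K$ consists of the $\chi$ minimizing $\lr{\lambda,\chi}+|x_\chi|$, so the cells of the special fibre are exactly the domains of linearity of the maximal convex piecewise-linear function $v_x$ with $v_x(\chi)\le-|x_\chi|$. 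This two-line computation is precisely the step you flag as the main obstacle: your section-lattice $\bigoplus_\chi\pi^{a_n(\chi)}R\cdot s_\chi$ encodes the same convex function, but matching its maximal cells with the top-dimensional orbits of $Z_0$ still requires identifying which orbits actually occur in the special fibre, which is what the coordinate computation delivers directly. So your route works, but you should either carry out that identification (e.g.\ by the same specialization-of-translates argument applied to the coordinates of $x_K$) or accept that the citation to Mumford/GKZ is doing real work under the weakened positivity hypothesis --- the reduction to the relatively ample projective situation over $\Spec R$, which you do establish, is what makes that citation legitimate.
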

\begin{proof}[Proof for ample line bundles]
Let us first recall the argument in the case that $\cL$ is ample: Replacing $\cL$ by a power if necessary, we may assume that $\cL$ is very ample, because $\wt_{\cL^n}=n\cdot \wt_{\cL}$ and thus this replacement does not affect statements (1) and (2).  Let $X\subseteq \bP(H^0(X,\cL))$ be the $T$-equivariant projective embedding defined by $\cL$, let $\Phi(\cL)=\{\chi_1,\dots,\chi_N\} \subseteq X^*(T)$ be the weights of $T$ on $H^0(X,\cL)$. The $T$-fixed points in $\bP(H^0(X,\cL))$ are exactly the projective subspaces defined by the weight-spaces $\bP(H^0(X,\cL)_\chi)\subseteq \bP(H^0(X,\cL))$ for some $\chi\in \Phi(\cL)$ and the weight of $\cL$ at a point $x_\chi$ in the weight-space $\bP(H^0(X,\cL)_{\chi})\subseteq \bP(H^0(X,\cL))$ is by construction $$\wt(\cL,x_\chi)=\chi.$$  

For $x\in X$ let $\supp(x)\subseteq \Phi(\cL)$ be the subset of those weights for which $x$ has a non-zero coordinate in the corresponding weight space.

For given $x\in X$ and any cocharacter $\lambda \in X_*(T)$ the limit $x_{\lambda,0}:=\lim_{t\to 0}\lambda(t).x$ exists in $X$ and $\supp(x_{\lambda,0}) \subseteq \supp(x)$ is the subset of those weights $\chi\in \supp(x)$, for which $\langle \chi,\lambda\rangle$ is minimal. Moreover, the limit point $x_{\lambda,0}$ is a fixed point for the $T$-action if and only if $\supp(x_{\lambda,0})=\{\chi\}$ is a single character. Conversely, a weight $\chi\in \supp(x)$ appears in this way if and only if there exists a cocharacter $\lambda\in X_*(T)$ for which $\lr{\chi,\lambda}$ is the minimal value of $\lambda$ on $\supp(x)$, i.e., if and only if $\chi$ is a vertex of the convex hull of $\supp(x)$. In this case $\chi = \wt(\cL,(x_{\lambda,0}))$ and in particular these weights are the vertices of the convex polytope $|\cell(x)| \subseteq X^*(T)_{\bR}$. This proves (\ref{item:polytope}) for projective $X$. 

For projective $X$ part (\ref{item:subdivision}) also follows from a direct computation:  Write $x_K=[x_\chi]_{\chi\in\Phi(\cL)}\in \bP(H^0(X,\cL))(K)$ for the projective coordinates of $x_K$ where $x_\chi$ is the coordinate of $x_K$ in the weight space corresponding to the character $\chi$. 

The points of the special fiber can be obtained as specializations of translates $x^\prime_{K^\prime}=t_{K^\prime}.x_K$ for some finite extension $R^\prime/R$ of DVR's with fraction field $K^\prime$  and some $t_{K^\prime}\in T(K^\prime)$. To determine the support of the specialization, let $\pi^\prime\in R^\prime$ be a uniformizing element and write $t_K^\prime$ as $\lambda(\pi^\prime)\cdot t_{R^\prime}$ for some $\lambda\in X_*(T)$ and $t_{R^\prime}\in T(R^\prime)$. Then the support of the specialization of $x^\prime_{K^\prime}=t_{K^\prime}.x_K$ in the special fiber consists of exactly those elements of $\supp(x_K)$ for which $\lr{\lambda,\chi}+|x_\chi|$ is minimal, where $|x_\chi|$ is the order of the coordinate $x_\chi$. Thus the possible supports that can be obtained in this way are defined by the linear parts of the maximal convex piecewise linear function $v_x\colon X^*(T)\to \bR$ that satisfies $v_x(\chi)\leq -|x_\chi|$. As these define a subdivision of $|\cell(x_K)|$ this proves (\ref{item:subdivision}) for projective $X$.
\end{proof}
We will reduce the general case to the projective situation. To do this, we need to provide analogues of the above polyhedral descriptions.
\begin{lem}\label{lem:orbitproj}
	Let $X$ be a proper normal variety with a torus action $T\times X \to X$ and $\cL$ a $T$-linearized line bundle on $X$ that is ample on $\bG_m$-orbit closures of closed points and $x\in X$ a closed point. Let $\{x_i\}_{i\in\cell(x)}$ be the closed orbits in $\overline{T.x}$.
	Then we have:
	\begin{enumerate}
		\item For any $i\in\cell(x)$ the set $$C_i:=\{ \lambda \in X_*(T) \mid  \lim_{t\to 0} \lambda(t).x =x_i\} \subseteq X_*(T)$$
		is the relative interior of a convex polyhedral cone $\overline{C}_i$.
		\item The union $X_*(T)= \cup_{i\in \cell(x)} \overline{C_i}$ defines a fan in $X_*(T)$ and
		the function 
		\begin{align*}
			\wt_{\cL,x}\colon X_*(T) &\to \bZ \\
			\lambda & \mapsto \lr{\wt(\cL,\lim_{t\to 0} \lambda(t).x),\lambda}
		\end{align*} defines a piecewise linear function that is strictly convex. 
		\item The weights $\chi_i:=\wt(\cL,x_i) \in X^*(T)$ for $i\in \cell(x)$ are the vertices of a convex polytope in $X^*(T)_\bR$.
	\end{enumerate}
\end{lem}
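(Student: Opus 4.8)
The plan is to reduce to the projective case established in the proof of \Cref{prop:geometric_cells} by covering the finitely many closed orbits $x_i \in \overline{T.x}$ with $T$-stable affine opens and then gluing the resulting polyhedral data. First I would invoke Sumihiro's theorem to choose, for each $i \in \cell(x)$, a $T$-invariant affine open neighborhood $U_i \ni x_i$ together with a closed $T$-equivariant embedding $U_i \hookrightarrow \bA^{N_i}$ into an affine space with a linear $T$-action. Since $x$ specializes to each $x_i$ via some cocharacter $\lambda$ (by the reduction to linear actions recalled in \Cref{sec:moment_complex}), the orbit $T.x$ meets $U_i$, so we may replace $x$ by a translate lying in $U_i$ and study the limit inside $U_i \subseteq \bA^{N_i}$. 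Writing $x = (x_\chi)_{\chi \in \supp_i(x)}$ in weight coordinates of $\bA^{N_i}$, the computation already performed in the ample case shows that $\lim_{t\to 0}\lambda(t).x$ is the coordinate projection onto those $\chi \in \supp_i(x)$ minimizing $\langle \chi, \lambda\rangle$; this limit is the fixed point $x_i$ precisely when that minimizing set is a single character $\chi_i$, which happens exactly when $\lambda$ lies in the (relative interior of the) normal cone to $\supp_i(x)$ at its vertex $\chi_i$. This is a closed convex polyhedral cone, proving (1), and identifies $C_i$ with the relative interior of that normal cone and $\chi_i = \wt(\cL,x_i)$.

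For (2), the cones $\overline{C_i}$ as $i$ ranges over $\cell(x)$ cover $X_*(T)$ because every cocharacter has a well-defined limit $\lim_{t\to 0}\lambda(t).x$ lying in some closed orbit; the intersection of two such cones is a common face, since on the overlap $U_i \cap U_j$ the two local weight-polytope descriptions must be compatible — here I would use that ampleness of $\cL$ on the orbit closure $\overline{T.x}$ forces the local piecewise-linear functions to patch into a single globally defined, strictly convex function $\wt_{\cL,x}$ on $X_*(T)$, exactly as the degree/weight relation in \Cref{ex:weight-degree} controls it on each $\bP^1$-orbit. Concretely, for any cocharacter $\lambda$ the value $\wt_{\cL,x}(\lambda) = \langle \wt(\cL, \lim_{t\to 0}\lambda(t).x), \lambda\rangle$ is linear on each $\overline{C_i}$ with linear part $\chi_i$, and strict convexity along a wall separating $\overline{C_i}$ and $\overline{C_j}$ is equivalent to ampleness of $\cL$ restricted to the one-parameter orbit closure degenerating $x_i$ into $x_j$, which is our hypothesis. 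Part (3) is then formal: the $\chi_i$ are precisely the distinct linear parts of the strictly convex piecewise-linear function $\wt_{\cL,x}$, hence are in convex position, i.e. they are the vertices of $\conv(\{\chi_i\}) = |\cell(x)|$.

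I expect the main obstacle to be (2): verifying that the locally defined fans and piecewise-linear functions coming from the different charts $U_i$ actually agree on overlaps and assemble into a global fan with a strictly convex support function, without the crutch of a global projective embedding. The cleanest way I see to handle this is to argue that any two closed orbits $x_i, x_j \in \overline{T.x}$ are connected by a chain of one-parameter orbit closures inside $\overline{T.x}$ — using that $\overline{T.x}$ is a proper (normal, after normalizing) toric-like variety, so its fixed points are linked by invariant curves — and to deduce global convexity from the degree computation of \Cref{ex:weight-degree} applied to each such $\bP^1$, rather than trying to compare the charts directly. An alternative, if one wants to avoid the chain argument, is to normalize $\overline{T.x}$, observe it is a complete toric variety for the quotient torus $T/\Stab$, and transport the statement through the standard toric dictionary; but since the paper is trying to avoid inductive/normalization-heavy arguments, I would favor the direct patching-via-$\bP^1$'s approach.
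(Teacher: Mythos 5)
Your proposal follows essentially the same route as the paper: Sumihiro's theorem plus a linear affine chart centered at each $x_i$ to identify $C_i$ as the relative interior of a polyhedral cone, the observation that every cocharacter's limit lands in some chart to get the covering/fan structure, and strict convexity of $\wt_{\cL,x}$ across walls deduced from the degree--weight relation of \Cref{ex:weight-degree} applied to the $\bG_m$-orbit closures of the limit points on the walls, with (3) as a formal consequence. The ``patching via $\bP^1$'s'' option you favor is exactly what the paper does, so the argument is correct and matches.
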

\begin{proof}
	Given $x\in X$ and a closed orbit $x_i\in \overline{T.x}$, as before there exists an affine open $T$-{invariant} neighborhood $U_i\subseteq X$ of $x_i$ by Sumihiro's theorem \cite[Corollary 2]{SumihiroEquivariantCompletion} and this contains $T.x$, as it intersects $\overline{T.x}$ non-trivially. Choosing a $T$-equivariant embedding $U_i\hookrightarrow \bA^N$ into an affine space equipped with a linear action of $T$ and noting that $T$-fixed points in $\bA^N$ are the weight zero subspace, we see that we may assume that under the embedding $U_i\to \bA^N$ the fixed point $x_i$ is mapped to $0\in \bA^N$. 
	
	Let us write $\Phi(\bA^N)\subseteq X^*(T)$ for the weights appearing in the $T$-representation $\bA^N$ and $(x_\chi)_{\chi \in \Phi(\bA^N)}$ for the weight decomposition of the image of $x$ in $\bA^N$. As before let $\supp(x)\subseteq \Phi(\bA^N)$ denote the set of weights for which  $x_\chi\neq 0$. By construction $0$ is in the closure of the orbit of $(x_\chi)_{\chi\in \Phi(\bA^N)}$, so there exists a cocharacter $\lambda\in X_*(T)$ such that $\lim_{t\to 0}\lambda(t).(x_\chi)=0$. The condition $\lim_{t\to 0}\lambda(t).(x_\chi)=0$ is equivalent to the condition that $\lr{\lambda,\chi}>0$ for all $\chi\in \supp(x)$. Thus 
	$$C_i:=\{ \lambda \in X_*(T) \mid  \lim_{t\to 0} \lambda(t).x =x_i\} \subseteq X_*(T)$$
	is the non-empty relative interior of a polyhedral cone, which proves (1).
	
	To prove (2) note that faces of the cone $\overline{C_i}$ are given by cocharacters $\lambda$ satisfying $\lr{\lambda,\chi}\geq 0$ for all $\chi\in \supp(x)$ and with equality for some $\chi$. For these $\lambda$ the limit $\lim_{t\to 0}\lambda(t).(x_\chi)=:x_\lambda$ exists in $\bA^N$ and is a point $x_\lambda \in \overline{T.x}\cap U_i$, as $x_\lambda$ specializes to $x_i \in U_i$. The point $x_\lambda$ is stabilized by a positive dimensional subgroup $\cap_{\chi\in \supp(x),\atop \lr{\lambda,\chi}=0} \ker(\chi)\subsetneq T$. Moreover all $T$-orbits in $\overline{T.x}\cap U_i\subseteq \bA^N$ arise in this way. 
	
	As $X$ is proper, we also know that for all cocharacters $\lambda \in X_*(T)$ the limit $\lim_{t\to 0} \lambda(t).x = x_\lambda$ exists in $X$ and the orbit closure $\overline{T.x_\lambda}$ contains some $T$-fixed point $x_i$. Thus $\overline{T.x_\lambda}$ defines a face of $\overline{C_i}$, so $X_*(T)= \cup_{i\in \cell(x)} \overline{C_i}$ defines a fan in $X_*(T)$.
	
	Finally the function $$\lambda \mapsto \lr{\wt(\cL,\lim_{t\to 0}\lambda(t).x),\lambda}$$ is linear on the cones $\overline{C_i}$, because on each cone it is given by the pairing with the weight $\chi_i$ of $\cL$ at the fixed point $x_i$. To show convexity, we use that $\cL$ is ample on $\bG_m$-orbit closures and therefore we know from \Cref{ex:weight-degree} that for all $\lambda$ and all $x^\prime \in X$ the difference
	$$\lr{\wt(\cL,\lim_{t\to 0}\lambda(t).x^\prime),\lambda} - \lr{\wt(\cL,\lim_{t\to \infty}\lambda(t).x^\prime),\lambda}>0$$
	is positive. Thus if $\lambda_0\in \overline{C}_i\cap\overline{C}_j$ lies in a face, $x^\prime=\lim_{t\to 0}\lambda_0(t).x$ is the corresponding limit point and $\lambda\in X_*(T)$ is a cocharacter that defines a line through $\lambda_0$ connecting $\overline{C}_i\cap\overline{C}_j$, the above inequality implies the convexity of our function. This shows (2) and Part (3) follows from the convexity of the function defined by the $\chi_i$.
\end{proof}
\begin{proof}[Proof of \Cref{prop:geometric_cells} (general case)]
	We have shown (1) in \Cref{lem:orbitproj} (3).  Moreover, for the proper toric variety $\overline{T.x}$ given by any orbit closure, the convexity assertion of \Cref{lem:orbitproj} shows that $\cL$ is ample on $\overline{T.x}$  (\cite[Section 3.4, Proposition]{FultonToric}). Therefore, the pull back of $\cL$ to the relative orbit closure $\overline{T.x_R}$ is ample and thus statement (2) also follows from the case of ample line bundles. 
\end{proof}
\begin{rem}\label{rem:projective_orbits}
	In the above argument we have seen that the existence of a line bundle $\cL$ on $X$ that is ample on $\bG_m$-orbit closures of closed points implies that the restriction of $\cL$ to any orbit closure of a closed point $\overline{T.x}\subseteq X$ is ample. In particular, in this case orbit closures are projective.
\end{rem}
By \Cref{prop:geometric_cells} the geometric realization $|\cell(x)|$ of a cell is a convex polytope in $X^*(T)_\bR$ and its boundary consists of the geometric realizations of the boundary of $\cell(x)$. Thus $\cC(X)$ defines a cell complex. A {\em subdivision of a cell} $c\in \cC(X)$ is a finite collection of cells $\{c_i\}_{i\in I}$ such that their geometric realizations define a subdivision of the convex polytope $|c|\subseteq X^*(T)_\bR$, i.e. such that 
$$\mathring{|c|}= \cupdot_{i\in I} \mathring{|c_i|}.$$

With these notions in place, we can now recall the definition of moment measures.
\begin{defn}[Moment measure \cite{BBQuotientsSurvey},\cite{BBS-conjecture}]\label{def:MomentMeasure}
A {\em moment measure} on $\cC(X)$ is a map 
$$m \colon \cC(X) \to \{0,1\}$$
that is not identically $0$, such that if $\{c_i\}_{i\in I}\in \cC(X)$ are cells whose geometric realizations define a subdivision of some cell $c$ and $m(c)=1$, then
$$ m (c)=  \sum_{i\in I} m(c_i).$$
A {\em geometric moment measure} is a moment measure that only takes the value $1$ on cells of maximal dimension.
\end{defn}
To a moment measure $m \colon \cC(X) \to \{0,1\}$ Bia\l ynicki-Birula and Sommese attach the open $T$-{invariant} subscheme 
$$U_{(m)}:= \{ x \in X \mid m( \cell(x^\prime)) \neq 0 \text{ for some }x^\prime \in \overline{T.x}\}\subseteq X.$$
As a moment measure is by definition not identically zero, the subset $U_{(m)}$ is non-empty. As the set of points whose cell is a fixed subset of $\pi_0(X^T)$ is constructible and $U_{(m)}$ is stable under generalizations by \Cref{prop:geometric_cells} (2), $U_{(m)}$ is a non-empty open subset of $X$. Also, by definition different moment measures define different open subsets of $X$.
\section{Proper quotients defined by moment measures}
As the definition of moment measures fits nicely with the existence criteria for good moduli spaces, we include an argument for the existence of proper moduli spaces for open substacks defined by moment measures. 

\begin{thm}[Bia\l ynicki-Birula--Sommese]
Let $X$ be a proper normal variety equipped with a torus action $T\times X \to X$ and $\cL$ a line bundle on $X$ that is ample on $\bG_m$-orbit closures of closed points. Let $m$ be a moment measure on $\cC(X)$. Then the open substack $[U_{(m)}/T] \subseteq [X/T]$ admits a proper good moduli space $[U_{(m)}/T]  \to U_{(m)}\mmod T$ that is a scheme.

The good moduli space $[U_{(m)}/T]  \to U_{(m)}\mmod T$ is a coarse moduli space if and only if $m$ is a geometric moment measure.
\end{thm}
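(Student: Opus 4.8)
The plan is to verify the valuative criteria of \cite{ahlh} characterising when an algebraic stack admits a proper good moduli space. Since $[U_{(m)}/T]$ is an open substack of $[X/T]$ it is of finite type over $k$ with affine diagonal, and by Sumihiro's theorem $X$, hence $U_{(m)}$, is covered by $T$-invariant affine opens $\Spec A$; on each of these $[(\Spec A\cap U_{(m)})/T]$ carries the affine good moduli space cut out by the invariant functions, and these are affine schemes. So local existence of good moduli spaces and the fact that the result is a scheme are automatic, and it remains to check $\Theta$-reductivity and S-completeness---which by \cite{ahlh} give a separated good moduli space---together with the existence part of the valuative criterion for properness. I would deduce all three from the combinatorics of the moment complex via \Cref{prop:geometric_cells}.

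The first step is a reformulation of membership in $U_{(m)}$. Orbit closures of closed points are projective toric varieties (\Cref{rem:projective_orbits}), so the orbits inside $\overline{T.x}$ are indexed by the faces of the polytope $|\cell(x)|$, and by \Cref{prop:geometric_cells} (1) the cell of the orbit attached to a face $F\preceq|\cell(x)|$ is the subset of $\cell(x)$ that $\wt_{\cL}$ maps to the vertices of $F$. Hence $x\in U_{(m)}$ if and only if $|\cell(x)|$ has a (not necessarily proper) face on which $m$ takes the value $1$. With this dictionary, $\Theta$-reductivity unwinds to the assertion: if $x\in X(R)$ and $\lambda\in X_*(T)$ are such that $x_K$, $\lim_{t\to 0}\lambda(t).x_K$ and the closed fibre $x_\kappa$ (with $\kappa=R/\cm$) all lie in $U_{(m)}$, then $\lim_{t\to 0}\lambda(t).x_\kappa$ lies in $U_{(m)}$. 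I would prove this by noting that the hypothesis on $\lim_{t\to 0}\lambda(t).x_K$ places an $m=1$ face on the face of $|\cell(x_K)|$ selected by $\lambda$, that passing to the special fibre subdivides the relevant polytopes (\Cref{prop:geometric_cells} (2)), and that additivity of $m$ then transports the value $1$ onto a face of $|\cell(\lim_{t\to 0}\lambda(t).x_\kappa)|$. S-completeness is the corresponding uniqueness statement, tested on $\oST_R$: the two limits in $U_{(m)}$ produced by a map $\oST_R\setminus 0\to[U_{(m)}/T]$ define two faces of a single polytope carrying the value $1$, and additivity of $m$ on their common refinement forces the closed orbits in their closures to agree. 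The same bookkeeping shows that $\overline{T.x}\cap U_{(m)}$ contains a unique closed orbit for every $x\in U_{(m)}$, which is what makes the good moduli space separated.

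For properness I would verify the existence part of the valuative criterion: given $x_K\in U_{(m)}(K)$ I must produce, after a finite extension of $R$, an element $\tau\in T(K)$ and a cocharacter $\lambda$ with $\lim_{t\to 0}\lambda(t).(\tau.x_K)$ defined and lying in $U_{(m)}$ over the closed point. Properness of $X$ provides such limits in $X$; the task is to steer one into $U_{(m)}$, and this is exactly what the subdivision axiom delivers. By \Cref{prop:geometric_cells} (2) the degenerations of $\overline{T.x_K}$ realise the subdivisions of $|\cell(x_K)|$ coming from one-parameter families, $m$ takes value $1$ on some face of $|\cell(x_K)|$, and additivity produces a piece of such a subdivision, of the same dimension as that face, on which $m=1$; choosing $\tau$ and $\lambda$ adapted to that piece lands the limit on that cell, hence in $U_{(m)}$. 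I expect this step---matching the abstract combinatorial subdivisions against honest torus degenerations, tracking the $T(K)$-twists, together with the uniqueness bookkeeping underlying S-completeness---to be the main obstacle; $\Theta$-reductivity and local existence are comparatively formal.

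It remains to separate the coarse case: a good moduli space $\pi\colon[U_{(m)}/T]\to U_{(m)}\mmod T$ is a coarse moduli space exactly when it is bijective on geometric points, equivalently when every $T$-orbit in $U_{(m)}$ is closed in $U_{(m)}$. If $m$ is a geometric moment measure, then for $x\in U_{(m)}$ the $m=1$ face of $|\cell(x)|$ has maximal dimension, hence equals $|\cell(x)|$; so $\cell(x)$ is a cell of maximal dimension with $m(\cell(x))=1$, no proper face of $|\cell(x)|$ meets $U_{(m)}$, and $T.x$ is closed in $U_{(m)}$. Conversely, if $m$ is not geometric, choose a cell $c_0$ with $m(c_0)=1$ whose dimension is not maximal and a minimal cell $c\subseteq c_0$ with $m(c)=1$; the orbit $T.x$ with $\cell(x)=c$ is closed in $U_{(m)}$ (minimality rules out its proper faces) and has dimension less than that of the generic orbit, so by the local structure of torus actions it is a specialisation of an orbit of strictly larger dimension, which---having a strictly larger moment polytope---again meets the $m=1$ locus and so lies in $U_{(m)}$. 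Thus two distinct orbits of $U_{(m)}$ have the same image under $\pi$, so $\pi$ is not injective and the good moduli space is not coarse. Hence $\pi$ is a coarse moduli space if and only if $m$ is a geometric moment measure.
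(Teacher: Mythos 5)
Your overall strategy coincides with the paper's: verify the valuative criteria of \cite{ahlh}, use \Cref{prop:geometric_cells} (2) together with the subdivision axiom for the existence part, and read off the coarse/good dichotomy from which cells carry the value $1$. The existence argument and the final dichotomy are essentially the paper's (modulo the small point that $\Spec A\cap U_{(m)}$ need not be affine, so local existence of the good moduli space is not quite ``automatic'' from Sumihiro alone; one needs saturated affine neighborhoods of closed orbits).

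The genuine gap is in the $S$-completeness and $\Theta$-reductivity steps. Both are extension properties over the codimension-two point $0$ of a two-dimensional base ($\oST_R$, resp.\ $\Theta_R$), and what must be produced is an actual extension of the morphism, not merely the combinatorial conclusion that ``the closed orbits in the two closures agree.'' Concretely, after trivializing torsors one gets a $\lambda$-equivariant map $\Spec R[x,y]/(xy-\pi)\smallsetminus\{0\}\to X$, which in general extends to $X$ only after an equivariant blow-up whose exceptional locus is a chain of projective lines mapping into the orbit closure; one must then show this chain contracts to a single point of $U_{(m)}$. The paper does this by showing, via the moment-measure axiom and convexity of the induced subdivision of $\cell(\widetilde{x}_{1,K})$, that every orbit met by the chain lies in $U_{(m)}$ and that all of them contain a common closed orbit in their closures, and then invoking Sumihiro: that closed orbit has a $T$-invariant affine neighborhood containing the whole chain, and an affine scheme contains no projective lines, so the chain is contracted and the map descends. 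This contraction mechanism -- the step converting your combinatorial statement into the extension demanded by $S$-completeness -- is absent from your sketch, and without it (or an equivalent, e.g.\ exhibiting saturated affine neighborhoods of closed orbits directly) the argument is incomplete. Note also that the special fibres $x_{1,0},x_{2,0}$ need not have cells with $m=1$; membership in $U_{(m)}$ only guarantees an $m=1$ face, so the bookkeeping must be carried out on the subdivision induced by the degeneration rather than on ``two faces of a single polytope carrying the value $1$.''
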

\begin{proof}
By the main theorem of \cite{ahlh}, the open substack $[U_{(m)}/T]$ admits a proper good moduli space if and only if it is $\Theta$-reductive, $S$-complete and satisfies the existence part of the valuative criterion for properness. 

We will recall the definitions of these valuative criteria below. For each of these we let $R$ be a DVR with fraction field $K$ and $\pi\in R$ a uniformizing element.

The existence criterion for properness follows from the assumption that moment measures are non-zero on some cell of a subdivison: Let $x_K\in [U_{(m)}/T](K)$ be a $K$-point. As $T$-torsors on $\Spec(K)$ are trivial, we can lift the point to a point $\widetilde{x}_K\in U_{(m)}(K)$. As $X$ is proper this lift extends to an $R$-valued point $\widetilde{x}_R\in X(R)$ and by \Cref{prop:geometric_cells} (2) the orbits in the special fiber of the orbit closure $Z_R:=\overline{T.\widetilde{x}_R}\subseteq X \times_k \Spec R$ define a subdivision of $\cell(\widetilde{x}_K)$. As $\widetilde{x}_K\in U_{(m)}(K)$ there is a cell $c\subseteq \cell(\widetilde{x}_K)$ with $m(c)=1$. The subdivision of $\cell(\widetilde{x}_K)$ also defines a subdivision of $c$ and thus the special fiber of $Z_R$ intersects $U_{(m)}$. Therefore, a translate of $\widetilde{x}_K$ extends to an $R$-point $\tilde{x}^\prime_R \in U_{(m)}(R)$ and this defines a point of $[U_{(m)}/T](R)$ that extends $x_K$.

To show $S$-completeness let us recall that $\oST_R=[\Spec R[x,y]/(xy-\pi)/\bG_m]$ is the quotient by the $\bG_m$-action on $\Spec R[x,y]/(xy-\pi)$ for which $x$ has weight $-1$ and $y$ weight $1$. The open substack $\oST_R\smallsetminus\{0\}$ is isomorphic to a union of two copies of $\Spec(R)$ glued along their generic points. The condition that $[U_{(m)}/T]$ is $S$-complete means by definition \cite[Definition 3.38]{ahlh} that any morphism $\oST_R\smallsetminus\{0\} \to [U_{(m)}/T]$ extends uniquely to $\oST_R$.

To show this let $f\colon \oST_R\smallsetminus\{0\} \to [U_{(m)}/T]$ be a morphism, i.e., $f$ is given by two $R$-points  $x_1,x_2\in [U_{(m)}/T](R)$ together with an isomorphism of their generic points $x_{1,K}\cong x_{2,K}$.

As $T$-torsors are trivial on $\Spec(R)$ the points $x_1,x_2\in [U_{(m)}/T](R)$ admit preimages $\widetilde{x}_{1},\widetilde{x}_2\in U_{(m)}(R)$. Their generic fibers $\widetilde{x}_{1,K},\widetilde{x}_{2,K}$ differ by an element $t_K\in T(K)$, because their images in $[U_{(m)}/T](K)$ are isomorphic. Thus we can lift the morphism $f$ to a morphism
$$\tilde{f}\colon \Spec R[x,y]/(xy-\pi) \smallsetminus \{0\} \to X$$
that is equivariant with respect to a cocharacter $\lambda\colon \bG_m \to T$ satisfying $\lambda(\pi)=t_K$. As $X$ is proper, the morphism $\tilde{f}$ extends to an equivariant blow-up $$\tilde{f}\colon\Bl_{I}(\Spec R[x,y]/(xy-\pi))\to X,$$
where we may assume (\cite[Lemma 2.1]{heinloth-stability}) that the exceptional fibers of the blow-up is a chain of projective lines which is mapped to the closure of $T.\widetilde{x}_{1,K}=T.\widetilde{x}_{2,K}$ in $X_R$. Note that $\Bl_{I}(\Spec R[x,y]/(xy-\pi)) \to \Spec R$ is a partial compactification of $\bG_m \times \Spec(R) \subseteq\Spec R[x,y]/(xy-\pi)\smallsetminus\{0\}$.

As the morphism $\tilde{f}$ factors through the orbit closure $\overline{T.\tilde{x}_1}$ in $X_R$ (which coincides with the orbit closure $\overline{T.\tilde{x}_2}$), we can as in \Cref{prop:geometric_cells} replace $X$ by this closure and thus assume that $X \subseteq \bP(H^0(X,\cL))$ is projective. We write $\tilde{x}_1=[x_\chi]$ for projective coordinates of the point $\tilde{x}_1$. In this case the $T$-orbits in the image of the special fiber of $\Bl_{I}(\Spec R[x,y]/(xy-\pi)) \to \Spec R$ arise as closures of the points $\lambda(a_K).[x_\chi]$ for some $a_K\in K^*$, because the generic fiber of the blow up is $\bG_m \times \Spec K$. As in \Cref{prop:geometric_cells} this means that the support of the specialization of $\lambda(a_K).[x_\chi]$ to the special fiber of $R$ consists of those characters $\chi$ for which $\lr{\chi,\lambda}\cdot |a_K| + |x_\chi|$ is minimal. The subdivision of $\cell(\widetilde{x}_{1,K})=\cell(\widetilde{x}_{2,K})$ defined by the specialization $\overline{T.\widetilde{x}_i}$ is obtained from the convex piecewise linear function defined by the the valuations $-|x_\chi|$ and by our assumption the cells of the orbits corresponding to the extremal parts of the chain of projective lines are given by $\widetilde{x}_{i}\in U_{(m)}(R)$. As $m$ is a moment measure it takes the value $1$ only on a single cell of the subdivision of $\cell(\widetilde{x}_{1,K})$ and as $\widetilde{x}_{i}\in U_{(m)}(R)$ this implies that the cells of the extremal parts share a common cell in their closure. By convexity this implies that the same holds for all orbits of the chain, so these have to lie in $U_{(m)}$ as well. As $U_{(m)}$ was defined by a moment measure, only one orbit of the intersection of $U_{(m)}$ and the special fiber of the orbit closure $\overline{T.\tilde{x}_1}=\overline{T.\tilde{x}_2}$ is closed in $U_{(m)}$ and all other orbits in the intersection contain this orbit in their closure. As closed orbits admit affine open $T$-{invariant} neighborhoods, all orbits in the intersection have to be contained in this affine neighborhood in particular this neighborhood cannot contain projective lines. Thus all exceptional fibers of the blow-up are mapped to the same point in $X$, i.e. $\tilde{f}$ restricts to a constant morphism on the exceptional fibers of the blow{-}up. This implies that the unique extension of $\tilde{f}$ descends to a unique extension of $f$ to $\Spec R[x,y]/(xy-\pi)$, because the local description of a blow{-}up of a regular surface in a point implies that the blow{-}up defines a push-out diagram for the contraction of the exceptional fiber. This proves $S$-completeness.

The condition that $[U_{(m)}/T]$ is $\Theta$-reductive (\cite[Definition 3.10]{ahlh})  is that if $\Theta_R= [\Spec R[x]/\bG_m]$ is the quotient by the standard $\bG_m$-action on $\bA^1_R=\Spec R[x]$ for which $x$ has weight $-1$ and $0 = [\Spec{{(}R/\pi{)}}/\bG_m] \subset \Theta_R$ is the unique closed point of $\Theta_R$, then any morphism $f\colon \Theta_R\smallsetminus\{0\} \to [U_{(m)}/T]$ extends uniquely to $\Theta_R$. 
This follows by the same argument used to prove $S$-completeness, because it is again an extension property in codimension $2$.

Thus $[U_{(m)}/T]$ admits a {proper} good moduli space $U_{(m)}\mmod T$. This algebraic space is a scheme, because every closed orbit in $U_{(m)}$ admits an open $T$-{invariant} affine neighborhood which has an affine good quotient and these define an open cover of $U_{(m)}\mmod T$.

Finally the good moduli space is a geometric quotient, if and only if the preimage of every point in $U_{(m)}\mmod T$ is a single $T$-orbit, which happens if and only if the moment measure takes the value $1$ only on cells of maximal dimension, as otherwise the orbits corresponding to cells that contain a cell with value $1$ in their closure are identified. 
\end{proof}

\section{Equivariant cycle classes of orbit closures}\label{sec:equivariantclasses}

This section contains our key technical result that cycle classes of orbit closures in the equivariant cohomology of our variety $X$ are determined by the cells of the orbits (\Cref{prop:CycleClassByCell}). To prove this result, the main examples we need to compute explicitly are the equivariant cycle classes for orbit closures of linear torus actions on affine spaces (\Cref{lem:OrbitClassInV}). 

In the following we will use cohomological cycle classes for closed substacks $[Z/T]\subseteq [X/T]$. For quite general quotient stacks Edidin and Graham constructed equivariant cycle classes \cite[page 606]{EdidinGraham} and explained why they satisfy the same properties as cycle classes for schemes. 
In our special situation of a smooth global quotient stack $[X/T]$ obtained from a torus action, their general construction is explicit and allows to deduce the properties we need directly from the case of schemes: Both Chow groups and cohomology groups do not change under pull-back to a vector bundle and in any fixed degree the groups do not change when one removes a substack of high enough codimension. For the classifying stack $B\bG_m$ this shows that $\bP^n = [\bA^{n+1}\smallsetminus \{0\}/\bG_m] \subseteq [\bA^{n+1}/\bG_m]$ computes the cohomology of $B\bG_m$ up to degree $2n$ and thus $H^*(B\bG_m,\bK)$ is a polynomial ring generated by the first Chern class of the universal line bundle. Similarly fixing an isomorphism $T\cong \bG_m^r$, for all $n\geq 0$ the quotient stack $[X \times (\bA^{n+1}\smallsetminus \{0\})^r/T]$ is a smooth proper scheme and for large enough $n$ the smooth morphism $$[X \times (\bA^{n+1}\smallsetminus \{0\})^r/T] \to [X/T]$$ induces an isomorphism on cohomology groups up to any given degree. 

For a closed substack $[Z/T] \subseteq [X/T]$ of codimension $c$ one can thus define the cycle class $$\class_{[X/T]}([Z/T]) \in H^{2c}([X/T],\bK)$$ as the cycle class of the preimage of $[Z/T]$ in $H^{2c}(X \times (\bA^{n+1}\smallsetminus \{0\})^r/T,\bK)$. This allows to deduce the properties of cycle classes from the corresponding results on schemes. 

Our first ingredient is a local computation for the cycle classes of orbit closures in affine spaces. Let us introduce some notation that is convenient to formulate the result. 

As $T=\bG_m^r$, the cohomology of the classifying stack $H^*(BT,\bK)$ is a polynomial ring with generators in degree $2$. More precisely,  any character $\chi \in X^*(T)$ defines a line bundle $\cL_\chi$ on $BT$ and the morphism $$X^*(T) \to H^2(BT,\bK)$$ given by the first Chern class of line bundles $\chi \mapsto c_1(\cL_\chi)$ induces an isomorphism
$$ \Sym^\bullet X^*(T)_\bK \cong H^{2\bullet}(BT,\bK)$$
and the cohomology groups in odd degrees vanish.

If $V$ is a finite dimensional vector space with a $T$-action we will write $\Phi(V)\subseteq X^*(T)$ for the weights that occur in $V$ with positive multiplicities and for $\chi\in \Phi(V)$ we denote by $m_\chi\in \bN$ the multiplicity of the corresponding weight space. For any point $x\in V$ we will as before write $$\supp(x)\subseteq \Phi(V)$$ for the set of weights such that $x$ has a non-zero coordinate in the corresponding weight spaces. 

If $\supp(x)$ contains a basis of $X^*(T)_\bQ$ the stabilizer $\Stab_{T}(x)=\cap_{\chi\in \supp(x)} \ker(\chi)$ is finite and we write $$s_x:= \ord(\Stab_{T}(x))$$
for the order of this group scheme, i.e., the $k$-dimension of its structure sheaf. If $\supp(x)$ does not span $X^*(T)_\bQ$, let $T_x:=\Stab_{T}(x)^\circ_{\red}$ denote the reduced  connected identity component of the stabilizer, i.e. $T_x$ is the subtorus of $T$ whose character lattice is the quotient of $X^*(T)/\Span(\supp(x))$ by its torsion subgroup and define
$$s_x:= \ord(\Stab_{T}(x)/T_x).$$
In both cases $s_x$ is the order of the torsion subgroup of $X^*(T)/\Span(\supp(x))$, as this coincides with the order of $\Stab_{T}(x)/\Stab_{T}(x)^\circ_{\red}$.
	
The following lemma computes the equivariant cycle classes of orbit closures in $H^*([V/T],\bK)$.
\begin{lem}[Orbits in representations]\label{lem:OrbitClassInV}
	Let $V$ be a $d$-dimensional representation of $T$ considered as affine space over $k$, $\Phi(V)\subseteq X^*(T)$ the set of weights of $T$ occurring in $V$ and suppose that $0\not\in \Phi(V)$. Let $x\in V$ be a closed point, $r_x:=\dim T.x$ the dimension of the orbit of $x$ and $s_x$ as before the order of the torsion subgroup of $X^*(T)/\Span(\supp(x))$.
	Then we have:
	\begin{enumerate}
		\item The cycle class $ \class_{[V/T]}([\overline{T.x}/T]) \in H^{2(d-r_x)}([V/T],\bK)$ is non-zero if and only if $0\in \overline{T.x}$.
		\item The class 		
		$$s_x\cdot \class_{[V/T]}([\overline{T.x}/T])\in H^{2(d-r_x)}([V/T],\bK)=H^{2(d-r_x)}(BT,\bK)$$ 
		depends only on the extremal rays of the cone spanned by $\supp(x)$.
		\item For any subset $I\subseteq \supp(x)$ let us denote by $x_I\in V$ the projection of $x$ on the sum of the weight spaces given by the weights in $I$. \begin{enumerate}
			\item If $I\subseteq \supp(x)$ is a set of $r_x$ weights spanning a simplicial cone, then
			$$ \class_{[V/T]}([\overline{T.x_I}/T]) = \prod_{\chi \in \Phi(V)\smallsetminus I} \chi^{m_\chi} \cdot \prod_{\chi \in I} \chi^{m_\chi-1}.$$
			\item If the weights $\supp(x)$ span a strictly convex cone $C$ in $X^*(T)_{\bQ}$ and $I,J\subseteq \supp(x)$ are subsets such that the cones spanned by them define a subdivision of the cone $C$, then
			$$ s_x\cdot \class_{[V/T]}([\overline{T.x}/T]) = s_{x_{I}}\cdot \class_{[V/T]}([\overline{T.x_{I}}/T]) + s_{x_{J}}\cdot \class_{[V/T]}([\overline{T.x_{J}}/T]).$$
		\end{enumerate} 
	\end{enumerate}
\end{lem}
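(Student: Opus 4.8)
The plan is to reduce everything to an explicit computation on the stack $[V/T]$, using that $V = \bigoplus_{\chi \in \Phi(V)} V_\chi$ decomposes into weight spaces and that $[V/T]$ is a vector bundle over $BT$, so $H^*([V/T],\bK) \cong H^*(BT,\bK) = \Sym^\bullet X^*(T)_\bK$. The model case is a single coordinate hyperplane: if $W \subseteq V$ is the $T$-invariant subspace obtained by setting one coordinate in a weight space $V_\chi$ to zero, then $[W/T] \hookrightarrow [V/T]$ is the zero locus of a section of the line bundle $\cL_\chi$ pulled back to $[V/T]$, hence has cycle class $\chi = c_1(\cL_\chi)$. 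More generally the coordinate subspace where all coordinates outside a subset $I \subseteq \Phi(V)$ vanish has cycle class $\prod_{\chi \notin I} \chi^{m_\chi}$ by taking intersections; this is where the genericity hypothesis $0 \notin \Phi(V)$ enters, ensuring all these subspaces are proper.

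For part (3)(a): if $I \subseteq \supp(x)$ consists of $r_x$ weights spanning a simplicial cone, then $\overline{T.x_I}$ sits inside the coordinate subspace $V_I := \bigoplus_{\chi \in I} V_\chi$, and first I would compute $\class_{[V/T]}([\overline{T.x_I}/T])$ as the pushforward along $[V_I/T] \hookrightarrow [V/T]$ — which contributes the factor $\prod_{\chi \in \Phi(V) \setminus I}\chi^{m_\chi}$ — of the class of $[\overline{T.x_I}/T]$ inside $[V_I/T]$. Inside $[V_I/T]$ the orbit closure is the image of the monomial-type map $T \to V_I$; since $I$ spans a simplicial cone of dimension $r_x$, the orbit $T.x_I$ is dense in $V_I'$, the product of the punctured lines through the chosen coordinates, times the remaining $(m_\chi-1)$-dimensional linear directions in each $V_\chi$. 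Quotienting $V_I$ by $T$, the orbit closure class becomes the class of the locus where, in each weight space $V_\chi$ with $\chi \in I$, the point lies on a fixed line, i.e. a codimension-$(m_\chi-1)$ linear condition; this yields $\prod_{\chi \in I}\chi^{m_\chi-1}$. Assembling, $\class_{[V/T]}([\overline{T.x_I}/T]) = \prod_{\chi \in \Phi(V)\setminus I}\chi^{m_\chi}\cdot\prod_{\chi\in I}\chi^{m_\chi-1}$, which is (3)(a). Part (1) then follows: the class is a product of the characters $\chi$, which is a nonzero element of $\Sym^\bullet X^*(T)_\bK$ precisely when $0$ is not forced out, i.e. exactly when $0 \in \overline{T.x}$; conversely if $0 \notin \overline{T.x}$ then $\Span(\supp(x))$ is a proper face and the class vanishes after restricting to the complement of a suitable subspace. (Here one uses the fan/cone description from \Cref{lem:orbitproj} to see that $0 \in \overline{T.x}$ iff $\supp(x)$ spans $X^*(T)_\bQ$, i.e. iff $\codim(\overline{T.x}) = d - r_x$ is matched by a genuinely nonzero product.)

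For part (2) and (3)(b), the idea is: pick a simplicial subdivision of the cone $C$ spanned by $\supp(x)$ into cones $C_1,\dots,C_m$ each spanned by a subset $I_k\subseteq\supp(x)$ of $r_x$ extremal-ray generators. Realize this geometrically: by \Cref{prop:geometric_cells}(2), there is a DVR $R$ and an $R$-point (a suitable multi-parameter degeneration of $x$) whose special fiber's top-dimensional orbits have supports exactly the $I_k$ (one can arrange this by choosing the valuations $|x_\chi|$ to realize a prescribed convex piecewise-linear function inducing the subdivision). The flat specialization gives $\class_{[V/T]}([\overline{T.x_R}/T])$ restricting to the same class in generic and special fiber, and the class of the special fiber is $\sum_k (\text{multiplicity}_k)\cdot \class_{[V/T]}([\overline{T.x_{I_k}}/T])$; the multiplicity of each component is exactly $s_{x_{I_k}}/s_x$ — or rather, keeping track of the torsion indices $[X^*(T):\Span(I_k)]$ versus $[X^*(T):\Span(\supp(x))]$, the correct normalization is that $s_x\cdot\class(\overline{T.x})$ equals $\sum_k s_{x_{I_k}}\cdot\class(\overline{T.x_{I_k}})$. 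This is precisely the additivity in (3)(b) (the two-piece case), and iterating a simplicial subdivision down to cones spanned only by the extremal rays of $C$ shows $s_x\cdot\class(\overline{T.x})$ depends only on those extremal rays, giving (2). I expect the main obstacle to be bookkeeping the torsion indices/multiplicities correctly — matching the scheme-theoretic multiplicity of each component of the special fiber (which is an index of sublattices) with the group-order factors $s_{x_{I_k}}$, and checking this is consistent with the normalization in (3)(a) where $s_{x_I}=1$ automatically since $I$ spans a simplicial cone of full rank in its span but may still have nontrivial torsion cokernel — so one must be careful that (3)(a) is stated without an $s$-factor and verify the simplicial generators can be chosen to be a lattice basis, or else absorb the discrepancy. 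A clean way around this is to prove (3)(b) first by the degeneration argument with all multiplicities made explicit via the toric dictionary (volumes of cones / lattice indices), then deduce (3)(a) and (2) as the simplicial, respectively extremal-ray, special cases.
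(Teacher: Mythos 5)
Your strategy is essentially the paper's: the Gysin/excess-intersection computation for coordinate subspaces gives (3)(a) (the paper phrases this as a reduction to the dense-orbit case via the smooth pair $V'\subseteq V$, but it is the same calculation), and (2) and (3)(b) are proved by a one-parameter degeneration whose special fibre realizes the subdivision, with component multiplicities identified with the lattice indices $s_{x_{I_k}}$. The only cosmetic difference is that the paper performs the subdivision one wall at a time over $\bA^1$ (scaling the coordinates on one side of a separating cocharacter) rather than realizing a full simplicial subdivision over a DVR at once; your worry about the missing $s$-factor in (3)(a) is in fact a non-issue, since the map $T\to \bG_m^I$ dual to the inclusion of the $r_x$ linearly independent characters is surjective, so $\overline{T.x_I}$ is the entire product of lines and its class in $H^0$ of itself is $1$ regardless of the torsion index; the index only enters as a multiplicity when comparing the two ends of a degeneration, exactly as in (3)(b).

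There is, however, a genuine gap in your treatment of part (1). The parenthetical claim that $0\in\overline{T.x}$ if and only if $\supp(x)$ spans $X^*(T)_\bQ$ is false: for $T=\bG_m$ acting on $V=k^2$ with weights $\{1,-1\}$ and $x=(1,1)$, the support spans $X^*(T)_\bQ$ but $\overline{T.x}$ is the hyperbola $x_1x_2=1$, which misses the origin and whose class is $c_1$ of the trivial bundle, i.e.\ $0$. The correct criterion is the existence of a cocharacter $\lambda$ with $\lr{\chi,\lambda}>0$ for all $\chi\in\supp(x)$, equivalently that $\supp(x)$ lies in an open half-space. This is not a pedantic point, because it is exactly what is needed to close the real gap: after the simplicial decomposition the class is a nonnegative integer combination of monomials $\prod\chi^{a_\chi}$, and such a sum can perfectly well vanish in $\Sym^\bullet X^*(T)_\bK$ even though each monomial is nonzero --- indeed the additivity statement for a subdivision of all of $X^*(T)_\bQ$ (the case where $0\notin\overline{T.x}$) produces precisely such a cancellation. "The class is a product of characters, hence nonzero" does not survive the passage to sums. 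The missing argument is: when $0\in\overline{T.x}$, all characters appearing in all monomials lie in the open half-space $\lr{\,\cdot\,,\lambda}>0$, so evaluating the resulting polynomial function on $X_*(T)_\bQ$ at $\lambda$ yields a strictly positive number, whence the sum is nonzero. With that positivity argument supplied, your proposal matches the paper's proof.
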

\begin{proof}		
Let us denote the orbit closure of the point $x \in V$ by $Z:=\overline{T.x} \subseteq V$. 

We will compute the cycle class of $[Z/T] \subseteq [V/T]$ in $H^*([V/T],\bK)\cong H^*(BT,\bK)$ by a reduction to the following simple cases:
\begin{enumerate}
	\item[(A)] If $0$ is not in the orbit closure $Z=\overline{T.x} \subseteq V$, i.e. $0\in V\smallsetminus Z$, then the cycle class vanishes, because the composition $$H^*([V/T],\bK) \to H^*([(V\smallsetminus Z)/T],\bK) \to H^*([0/T],\bK)  = H^*(BT,\bK)$$is an isomorphism and the cycle class of $[Z/T] \subseteq [V/T]$ is contained in the kernel of the first map.
	\item[(B)] If the orbit closure $Z=\overline{T.x} = V$ is the entire affine space, then the cycle class $\class_{[V/T]}([\overline{T.x}/T])$ in $H^0([V/T],\bK) \cong H^0([\overline{T.x}/T],\bK) \cong \bK$ is $1$.
\end{enumerate}
Case (A) shows that the condition in part (1) of the lemma is necessary to obtain a non-zero cycle class.
  
\vspace{0.5em}
  
\noindent{\sc Step 1: Reduction to one dimensional weight spaces.} To reduce to these cases, let $V^\prime \subseteq V$ be the $T$-{invariant} subspace of $V$ spanned by the non-zero weight components $x_\chi$ of $x$. By the Gysin sequence for the smooth pair $V^\prime \subseteq V$ we have 
$$ \class_{[V/T]}([Z/T]) = \prod_{\chi \in \Phi(V)\smallsetminus\supp(x)} \chi^{m_\chi} \cdot \prod_{\chi \in \supp(x)} \chi^{m_\chi-1} \cdot \class_{[V^\prime/T]}([Z/T]) \in H^*(BT,\bK).$$
Thus we are reduced to computing the cycle class in the case that $\Phi(V^\prime)=\supp(x)$ and that all weight spaces are $1$-dimensional. Combining the result with (B) we obtain the formula for the case of a simplicial cone claimed in (3)(a).

\vspace{0.5em}

\noindent{\sc Step 2: Reduction to finite stabilizers.} We can now reduce to the case that the stabilizer $\Stab_{T}(x)=\cap_{\chi\in \Phi(V^\prime)} \ker(\chi)$ is finite. To do this let $T_x:=\Stab_{T}(x)^\circ_{\red}$ again denote the reduced connected identity component of the stabilizer of $x$. Then $T_x$ is again a torus, so that we have a splitting $T\cong T_x\times T/T_x$ and $$[V^\prime/T]\cong BT_x \times [V^\prime/(T/T_x)].$$ Thus the class $\class_{[V^\prime/T]}([Z/T])\in H^*([V^\prime/T],\bK)=H^*(BT,\bK)$ is the image of $  \class_{[V^\prime/(T/T_x)]}([Z/(T/T_x)])\in H^*([V^\prime/(T/T_x)],\bK)= H^*(B(T/T_x),\bK)$ under the injective restriction map $H^*(B(T/T_x),\bK)\to H^*(BT,\bK)$ induced from $X^*(T/T_x)\hookrightarrow X^*(T)$. And the stabilizer of $x$ in $T/T_x$ is finite by construction. We can thus replace $T$ by $T/T_x$ to assume that the stabilizer of $x$ is finite.

\vspace{0.5em}

\noindent{\sc Step 3: Class depends on extremal rays only.} Let us show next that we can assume that all elements of $\supp(x)$ are extremal rays. Let $E\subseteq \supp(x)$ be the subset of characters that lie in extremal rays of the cone given by the support of $x$ and denote by $x_E$ the projection of $x$ on the sum of the weight spaces given by $E$. Let $s_E$ be the order of the stabilizer $\Stab_{T}(x_E)$. We claim that
$$s_E \cdot \class_{[V^\prime/T]}([\overline{T.x_E}/T]) = s_x\cdot \class_{[V^\prime/T]}([\overline{T.x}/T]).$$
To show this, consider the family of points $x_{\bA^1}\subseteq V^\prime \times \bA^1$ defined as 
$$x_{\bA^1} := \big((x_\chi, a\cdot x_{\tau})_{\chi \in E, \tau \in \Phi(V^\prime)\smallsetminus E}, a\big) \subseteq V^\prime \times \bA^1$$
and the closure $Z_{\bA^1}:= \overline{T.x_{\bA^1}} \subseteq V^\prime \times \bA^1$. The fiber over $1$ of the closure $Z_{\bA^1}$ is $\overline{T.x}=Z$.
The fiber over $0$ of the closure $Z_{\bA^1}$ contains only points of $\overline{T.x_E}$, because for any point $y_0 \in Z_{\bA^1}\cap (V^\prime \times \{0\})$ there exist a DVR $R$ over $\bA^1$ with fraction field $K$ and $t_K\in T(K)$ such that 
$$t_K.x_{K}= \big((\chi(t_K)x_\chi, a\cdot \tau(t_K)x_{\tau})_{\chi \in E, \tau \in \Phi(V^\prime)\smallsetminus E}, a\big)$$  extends to an $R$-valued point with closed point mapping to $y_0$.  
As $x_{\bA^1}$ has integral coordinates, this implies that $\chi(t_K)\in R$ for all $\chi\in E$. As the characters not contained in $E$ are convex combinations of the characters contained in $E$, $\tau(t_K)\in R$ for all $\tau\in \Phi(V^\prime) \setminus E$ as well and as the coordinate function $a$ of $\bA^1$  maps to the maximal ideal in $R$, this shows that the coordinates of any limit point have to vanish outside $E$ and so $y_0\in \overline{T.x_E}$. 

To compare the multiplicities of the fibers over $1$ and $0$, we note that $s_x$ and $s_E$ are the degrees of the restriction of the map $x_{\bA_1}$
$$\xymatrix{\bA^1 \ar[r]^-{x_{\bA^1}}\ar[dr]_-{\id} & [Z_{\bA^1}/T] \subseteq [V^\prime/T]\times \bA^1\ar[d]\\ & \bA^1}$$
to the points $1$ and $0$ over their images in $[V'/T] \times \bA^1$. Thus we find 
$$s_E \cdot \class_{[V^\prime/T]}([\overline{T.x_E}/T]) = s_x\cdot \class_{[V^\prime/T]}([\overline{T.x}/T]).$$
The same argument applies if we find colinear elements in the extremal rays $E$ of the support of $x$, as these can similarly be specialized to $x_E$ with a single non-trivial weight in the corresponding weights, by multiplying all other coordinates in the ray by $a$ as above.

This shows that the class $ s_x \cdot \class_{[V^\prime/T]}([Z/T])$ only depends on the extremal rays, which is the claim of (2). It also shows that to prove (3)(b) we may assume that $x=x_E\in V^\prime$ has only non-trivial components in extremal rays and only one non-zero component on each of these rays. 

\vspace{0.5em}

\noindent{\sc Step 4: The inductive formula.}
The strategy used above also allows us to compute the cycle class of $x_E$ by induction, using the result for simplicial cones as a starting point. 

If the cone spanned by $\supp(x_E)$ is not simplicial, let $\chi_1\in \supp(x_E)$ be some extremal ray and $\chi_2,\dots,\chi_s$ be linearly independent rays such that there exists a cocharacter $\lambda\in X_*(T)$ with 
\begin{align*}
\lr{\chi_1,\lambda}&<0 \\
\lr{\chi_j,\lambda}&=0 \text{ for }j=2,\dots,s\\
\lr{\tau,\lambda}&>0 \text{ for }\tau \in \supp(x_E)\smallsetminus\{\chi_1,\dots,\chi_s\}.
\end{align*}
Consider the family of points $x_{\bA^1}\subseteq V^\prime \times \bA^1$ defined as 
$$x_{\bA^1} := \big((x_{\chi_i}, \lambda(a).x_{\tau})_{i=1,\dots,s,\tau \in \supp(x_E)\smallsetminus\{\chi_1,\dots,\chi_s\}}, a\big) \subseteq V^\prime \times \bA^1,$$
specializing to $$x_{\lambda\leq 0}:=\big((x_{\chi_i})_{i=1,\dots,s},0\big) \in V^\prime.$$
The closure $Z_{\bA^1}:= \overline{T.x_{\bA^1}} \subseteq V^\prime \times \bA^1$ contains the second section
$$x^\prime_{\bA^1}:= \lambda^{-1}(a).x_{\bA^1}=   \big((a^{-\lr{\lambda,\chi_1}}x_{\chi_1},x_{\chi_i},x_{\tau})_{i=2,\dots,s,\tau \in \supp(x_E)\smallsetminus\{\chi_1,\dots,\chi_s\}}, a\big)$$
that specializes to $$x_{\lambda\geq 0}:=\big((0, (x_{\chi_i})_{i=2,\dots,s},x_\tau\big)_{\tau \in \supp(x_E)\smallsetminus\{\chi_1,\dots,\chi_s\}} \in V^\prime.$$
As before, the fiber over $0$ of $Z_{\bA^1}$ is the union of the orbit closures $\overline{T.x_{\lambda\leq 0}}\cup \overline{T.x_{\lambda\geq 0}}$, because the condition that for the fraction field $K$ of a DVR $R$ over $\bA^1$ and a point $t_K\in T(K)$ the element $t_K.x_{K}$ extends to an $R$-valued point and has non-zero reduction over $0$ for a basis of characters implies that the specialization is one of the two given points. 

The multiplicities are determined as before, so we find
$$ s_{x_E} \cdot \class_{[V^\prime/T]}([\overline{T.x_E}/T]))=s_{x_{\lambda \leq 0}} \cdot \class_{[V^\prime/T]}([\overline{T.x_{\lambda \leq 0}}/T]))+ s_{x_{\lambda \geq 0}} \cdot \class_{[V^\prime/T]}([\overline{T.x_{\lambda \geq 0}}/T])). $$
Thus the class is the sum of the classes obtained from any simplicial decomposition of the cone. This shows (3)(b). 

Finally, we claim that the sum of the cycles classes obtained from a decomposition of the cone defined by $\supp(x_E)=\Phi(V^\prime)\subseteq X^*(T)$ into simplicial cones is non-zero. This holds because the characters in $\supp(x_E)$ lie in an open half-space in $X^*(T)_\bQ$, as we assumed that $0\in \overline{T.x}$. Thus the monomials in (3)(a) are products of Chern classes of characters lying in an open half-space and as $\class_{[V^\prime/T]}([\overline{T.x_E}/T])\in H^*([V^\prime/T],\bK)=\Sym^\bullet X^*(T)_\bK$ is a positive linear combination of these classes $ s_E\cdot \class_{[V^\prime/T]}([\overline{T.x_E}/T]) = s_x \cdot \class_{[V^\prime/T]}([\overline{T.x}/T])\neq 0$. This finishes the proof of (1). 
\end{proof}
By \Cref{lem:OrbitClassInV} (2) we may define the cycle class of a cone of weights as the class of any point $x\in V$ such that the weights in $\supp(x)$ span the corresponding cone:
\begin{defn}[Cycle classes of cones]
		Let $V$ be a finite dimensional representation of $T=\bG_m^r$,  let $\Phi(V)\subseteq X^*(T)$ be the set of weights of $T$ occurring in $V$.
		If $\overline{C}\subseteq X^*(T)_\bR$ is a strictly convex cone spanned by extremal weights $I\subseteq \Phi(V)$  and $x_{\overline{C}}\in V$ is any point such that $\supp(x_{\overline{C}})=I$ we define the {\em cycle class of the cone} $\overline{C}$ to be:
		$$\class_{[V/T]}(\overline{C}) := s_{x_{\overline{C}}}\cdot \class_{[V/T]}([\overline{T.x_{\overline{C}}}/T]) \in H^*([V/T],\bK),$$
		where as before $s_{x_{\overline{C}}}$ is the {order of the torsion subgroup of $X^*(T)/\Span(I)$}.
\end{defn}

From \Cref{lem:OrbitClassInV} we can deduce the following result on cycle classes of cones.
\begin{lem}\label{lem:AdditiveCones}
			Let $V$ be a finite dimensional representation of $T=\bG_m^r$,  let $\Phi(V)\subseteq X^*(T)$ be the set of weights of $T$ occurring in $V$. Then the cycle classes of cones satisfy the following properties.
			\begin{enumerate}
				\item If $\overline{C}=\cup_{i=1}^n \overline{C}_i$ is a subdivision of $\overline{C}$ into strictly convex cones of the same dimension, then we have $$\class_{[V/T]}(\overline{C})=\sum_{i=1}^n \class_{[V/T]}(\overline{C}_i).$$
				\item If $X^*(T)_{\bQ}=\cup_{i=1}^n \overline{C}_i$ is a subdivision of $X^*(T)_{\bQ}$ into strictly convex cones of maximal dimension, then $$\sum_{i=1}^n \class_{[V/T]}(\overline{C}_i) =0.$$
			\end{enumerate}
\end{lem}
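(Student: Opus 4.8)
The plan is to deduce both parts from \Cref{lem:OrbitClassInV}, whose part~(3)(b) already provides the additivity of the cycle class under a two-piece subdivision of a cone spanned by the support of a point.

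For part~(1) I would proceed as follows. Pick a point $x\in V$ whose support $\supp(x)$ is the set of \emph{all} rays occurring among the $\overline C_i$, so that the cone spanned by $\supp(x)$ is $\overline C$ itself. Then \Cref{lem:OrbitClassInV}(2) shows that $s_x\cdot\class_{[V/T]}([\overline{T.x}/T])$ depends only on the extremal rays of $\overline C$, hence equals $\class_{[V/T]}(\overline C)$; the same holds for each $\overline C_i$, using a point supported on the rays of $\overline C_i$. Next fix a simplicial refinement $\{D_j\}$ of the subdivision $\{\overline C_i\}$ that introduces no new rays --- e.g.\ a pulling subdivision for a fixed ordering of $\supp(x)$ --- so that every $D_j$ has all its rays in $\supp(x)$ and lies in exactly one $\overline C_i$. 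The inductive argument in the proof of \Cref{lem:OrbitClassInV}(3)(b) shows that for a point whose support spans a strictly convex cone, $s_x\cdot\class_{[V/T]}([\overline{T.x}/T])$ equals the sum of the cone classes of the pieces of \emph{any} simplicial subdivision using only rays from the support. Applying this to $\overline C$ and to each $\overline C_i$, and regrouping the $D_j$ by the cone $\overline C_i$ that contains them, gives
\[
\sum_i\class_{[V/T]}(\overline C_i)=\sum_j\class_{[V/T]}(D_j)=\class_{[V/T]}(\overline C).
\]

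Part~(2) cannot be attacked directly in this way, since $X^*(T)_\bQ$ is not a strictly convex cone. I would first use part~(1) to replace the fan $\Sigma:=\{\overline C_i\}$ by a simplicial refinement without new rays, and so may assume each $\overline C_i$ is simplicial with primitive ray generators $I_i$ and multiplicity $s_i:=\ord(X^*(T)/\Span(I_i))$. Assuming (as we may, otherwise all classes vanish) that $0\notin\Phi(V)$, \Cref{lem:OrbitClassInV}(3)(a) gives, in the polynomial ring $H^*(BT,\bK)=\Sym^\bullet X^*(T)_\bK$,
\[
\class_{[V/T]}(\overline C_i)=s_i\cdot\frac{\Delta_V}{\prod_{\chi\in I_i}\chi},\qquad \Delta_V:=\prod_{\chi\in\Phi(V)}\chi^{m_\chi}\neq 0 ,
\]
so that $\sum_i\class_{[V/T]}(\overline C_i)=\Delta_V\cdot\beta(\Sigma)$, where $\beta(\Sigma):=\sum_i s_i\big/\prod_{\chi\in I_i}\chi$ is an element of the fraction field which does \emph{not} depend on $V$. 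It therefore suffices to show $\beta(\Sigma)=0$ for every complete simplicial fan. By part~(1), $\beta$ is unchanged under simplicial refinement; since any two complete fans have a common refinement, which may be simplicially refined without introducing new rays, $\beta(\Sigma)$ is independent of $\Sigma$ and equals $\beta(\Sigma_0)$ for the fan $\Sigma_0$ of a product of $r$ projective lines --- the orthant fan with maximal cones $\mathrm{cone}(\epsilon_1 f_1,\dots,\epsilon_r f_r)$, $\epsilon\in\{\pm1\}^r$, associated with a basis $f_1,\dots,f_r$ of $X^*(T)$. All these cones have multiplicity $1$, and
\[
\beta(\Sigma_0)=\sum_{\epsilon\in\{\pm1\}^r}\ \prod_{k=1}^r\frac{1}{\epsilon_k f_k}=\prod_{k=1}^r\!\left(\frac{1}{f_k}+\frac{1}{-f_k}\right)=0 .
\]

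I expect part~(2) to be the main obstacle. The two-piece subdivision formula of \Cref{lem:OrbitClassInV} only sees pointed cones, so one cannot telescope over a complete fan directly; the key realization is that $\sum_i\class_{[V/T]}(\overline C_i)$ is controlled by an invariant $\beta$ of the fan which is additive under refinement and independent of the ambient representation, hence needs to be computed on a single model only --- the fan of a product of $\bP^1$'s --- where it visibly factors through the vanishing of $f_k^{-1}+(-f_k)^{-1}$. Two small technical points require care: that a refining simplicial subdivision can always be chosen without adding new rays (so \Cref{lem:OrbitClassInV} still applies over the given $V$), and that passing to common refinements causes no trouble precisely because $\beta$ is $V$-independent.
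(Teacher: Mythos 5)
Your proposal is correct. Part (1) is essentially the paper's own argument: refine the subdivision simplicially without introducing new rays and iterate \Cref{lem:OrbitClassInV} (3)(b); your extra care about pulling refinements only makes explicit what the paper leaves implicit. Part (2), however, takes a genuinely different route. The paper stays inside the geometry of \Cref{lem:OrbitClassInV}: it picks a point $x$ supported on one weight per ray of the complete fan, notes that $\overline{T.x}$ cannot contain $0$ (no cocharacter is positive on a spanning set of rays), so $\class_{[V/T]}([\overline{T.x}/T])=0$ by part (1) of that lemma, and then degenerates the family $(ax,a)\subseteq V\times \bA^1$ to identify this vanishing class with the sum $\sum_i\class_{[V/T]}(\overline{C}_i)$. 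You instead extract from \Cref{lem:OrbitClassInV} (3)(a) the closed formula $\class_{[V/T]}(\overline{C}_i)=\Delta_V\cdot s_i/\prod_{\chi\in I_i}\chi$, cancel $\Delta_V$, and reduce to the classical identity $\sum_\sigma \mathrm{mult}(\sigma)/\prod_\rho u_\rho=0$ for complete simplicial fans, proved by refinement-invariance and evaluation on the orthant fan of $(\bP^1)^r$. Both are valid. The paper's degeneration argument is shorter and never leaves the fixed representation $V$, needs no reduction to simplicial cones, and reuses the vanishing criterion already established; your argument isolates a purely combinatorial, $V$-independent invariant $\beta(\Sigma)$ (essentially the Atiyah--Bott/Brion localization identity), at the cost of the two technical points you correctly flag — common refinements can introduce rays not lying in $\Phi(V)$, which you must absorb by enlarging the representation, and one must check that $s_I/\prod_{\chi\in I}\chi$ is insensitive to the choice of (possibly non-primitive) weight generators on each ray, which it is.
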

\begin{proof} Part (1) of the lemma follows by induction from \Cref{lem:OrbitClassInV} (3)(b), because any subdivision can be refined into a simplicial subdivision.

For (2) we show that the sum of the cycle classes is equal to the cycle class of an orbit that does not contain $0$ in its closure, by \Cref{lem:OrbitClassInV} (1) this will imply the claim. To show this choose a point $x\in V$ such that $\supp(x)\subseteq \Phi(V)$ has a single non-trivial coordinate in each extremal ray occurring in the decomposition. Then the orbit closure $\overline{T.x}$ does not contain $0$, because there does not exist a cocharacter $\lambda\in X_*(T)$ such that $\lr{\chi,\lambda}>0$ for all $\chi \in\supp(x)$. In particular the cycle class $\class_{[V/T]}([\overline{T.x}/T])=0$. 

Consider the orbit closure of the family of points given by $(ax,a)\in V\times \bA^1$. Then the fiber over $0$ is the union of the orbit closures $\overline{T.x_I}$ where $I\subseteq \supp(x)$ are those subsets, such that there exists a rational cocharacter $\lambda\in X_*(T)_{\bQ}$ such that $\lr{\chi,\lambda}=-1$ for $\chi \in I$ and $\lr{\chi,\lambda}>-1$ for $\chi\in \supp(x)\smallsetminus I$, i.e. $I$ has to be a face of the convex polytope defined by $\supp(x_I)$. As by construction the cones $\overline{C}_i$ are subdivisions of the cones defined by these faces $\sum_{i=1}^n \class_{[V/T]}(\overline{C}_i) = s_x \cdot \class_{[V/T]}([\overline{T.x}/T])=0$. 
\end{proof}
%
To translate the local computation to general $X$ we show that the cell of a point in $X$ determines the cones of weights in the Zariski tangent space to fixed points. 
\begin{prop}\label{lem:CellDeterminesRays}
	Let $X$ be a {normal} proper variety equipped with a torus action $T\times X \to X$ that admits a line bundle $\cL$ that is ample on $\bG_m$-orbit closures of closed points. Let $x\in X$ be a point, $x_i \in \overline{T.x} \cap F_i$ a fixed point of the orbit closure and $\supp_{F_i}(x)\subseteq X^*(T)$ the set of weights that appear in the tangent space $T_{\overline{T.x},x_i}\subseteq T_{X,x_i}$  to the orbit closure at $x_i$. 
	
	Then the extremal rays of the cone generated by $\supp_{F_i}(x)$ in $X^*(T)_\bR$ are those differences $\wt_{\cL}(j)-\wt_{\cL}(i)$ for $j\in \cell(x)$ that define the one dimensional faces of the polytope $|\cell(x)|$ at the vertex $\wt_{\cL}(i)$. In particular the extremal rays are determined by $\cell(x)$. 
\end{prop}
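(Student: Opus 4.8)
The plan is to reduce to a local computation near the fixed point $x_i$ via Sumihiro's theorem and then transport the polyhedral description of \Cref{prop:geometric_cells} and \Cref{lem:orbitproj} from the affine model to the statement about tangent weights. First I would choose, using Sumihiro \cite{SumihiroEquivariantCompletion}, an affine $T$-invariant open neighborhood $U_i$ of $x_i$ together with a $T$-equivariant closed embedding $U_i\hookrightarrow \bA^N=\Spec k[y_1,\dots,y_N]$ into a representation $V=\bA^N$ with linear $T$-action, arranged so that $x_i\mapsto 0$ (as in the proof of \Cref{lem:orbitproj}). Since $x_i\in\overline{T.x}$, the neighborhood $U_i$ meets $\overline{T.x}$ and hence contains the orbit $T.x$, so $\overline{T.x}\cap U_i\hookrightarrow V$ is the closure in $V$ of the orbit of (the image of) $x$, and $\supp_{F_i}(x)$ is exactly the set of weights occurring in the Zariski tangent space $T_{\overline{T.x}\cap U_i,0}\subseteq V$.

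Next I would identify $\supp_{F_i}(x)$ with $\supp(x)$ in the sense used in \Cref{lem:orbitproj}. The orbit closure $\overline{T.x}\cap U_i$ is an affine toric variety with torus $T/T_x$, whose fan I would describe: by \Cref{lem:orbitproj}(1)-(2) the cocharacters $\lambda$ with $\lim_{t\to 0}\lambda(t).x=x_i$ form the relative interior of a full-dimensional strictly convex cone $\overline{C}_i\subseteq X_*(T)_\bR$, cut out by $\langle\lambda,\chi\rangle\ge 0$ for $\chi\in\supp(x)$. The dual cone $\overline{C}_i^\vee\subseteq X^*(T)_\bR$ is precisely the cone generated by $\supp(x)$, and the affine chart of the toric variety corresponding to the vertex $x_i$ has coordinate ring generated by the weights in $\overline{C}_i^\vee$. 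The Zariski tangent space at the torus-fixed point $0$ of an affine toric variety is spanned by the weights lying on the extremal rays of $\overline{C}_i^\vee$ (equivalently, the minimal generators of the monoid $\overline{C}_i^\vee\cap X^*(T)$ that are indecomposable): a local coordinate $y_\chi$ vanishes to first order at $0$ iff $\chi$ cannot be written as a sum of two nonzero lattice points of the cone, and the weights on extremal rays are exactly those. Hence the extremal rays of the cone generated by $\supp_{F_i}(x)$ coincide with the extremal rays of the cone generated by $\supp(x)$.

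It then remains to translate extremal rays of $\overline{C}_i^\vee$ into the face structure of $|\cell(x)|$ at $\wt_{\cL}(i)$. By \Cref{prop:geometric_cells}(1) the vertices of the polytope $|\cell(x)|$ are the characters $\{\wt_{\cL}(j)\mid j\in\cell(x)\}$, and by the argument in the ample case recalled there, $\wt_{\cL}(j)$ is the vertex of $|\cell(x)|$ minimizing $\langle\,\cdot\,,\lambda\rangle$ exactly for $\lambda$ in the relative interior of the cone $\overline{C}_j$. Thus $\overline{C}_i^\vee$ is the normal cone (in $X^*(T)_\bR$, up to translating $\wt_{\cL}(i)$ to the origin) of the polytope $|\cell(x)|$ at the vertex $\wt_{\cL}(i)$: concretely $\overline{C}_i^\vee = \Span_{\bR_{\ge0}}\{\wt_{\cL}(j)-\wt_{\cL}(i)\mid j\in\cell(x)\}$, using strict convexity of $\wt_{\cL,x}$ from \Cref{lem:orbitproj}(2) to see that $\langle\wt_{\cL}(j)-\wt_{\cL}(i),\lambda\rangle\ge 0$ on $\overline{C}_i$ for every $j$. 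Under the bijection between faces of a polytope at a vertex and faces of its normal cone, the one-dimensional faces (edges) of $|\cell(x)|$ through $\wt_{\cL}(i)$ correspond to the facets of $\overline{C}_i^\vee$ — no, dually, to the extremal rays of $\overline{C}_i^\vee$ only when $|\cell(x)|$ is simple at that vertex, so more carefully: an edge of $|\cell(x)|$ from $\wt_{\cL}(i)$ in direction $\wt_{\cL}(j)-\wt_{\cL}(i)$ is the minimal-value locus of a codimension-one family of $\lambda$'s, i.e. the extremal ray of $\overline{C}_i^\vee$ spanned by $\wt_{\cL}(j)-\wt_{\cL}(i)$ is dual to a facet of $\overline{C}_i$, and conversely. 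Combining this with the previous paragraph, the extremal rays of the cone generated by $\supp_{F_i}(x)$ are exactly the rays spanned by those $\wt_{\cL}(j)-\wt_{\cL}(i)$ with $j\in\cell(x)$ that lie on an edge of $|\cell(x)|$ at $\wt_{\cL}(i)$, which is the assertion; since $\cell(x)$ and the map $\wt_{\cL}$ determine $|\cell(x)|$ and hence its edge directions at each vertex, the extremal rays are determined by $\cell(x)$.

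The step I expect to be the main obstacle is the dictionary between tangent-space weights at the toric fixed point and extremal rays of the dual cone — i.e. showing that precisely the weights on extremal rays of $\overline{C}_i^\vee$ survive in $T_{\overline{T.x},x_i}$ (no extra indecomposable monoid generators off the extremal rays contribute, and none on the extremal rays are killed). This is where one must be careful about the possible non-smoothness of the orbit closure at $x_i$ and about torsion in $X^*(T)/\Span(\supp(x))$; I would handle it by reducing to $T/T_x$ (so the stabilizer of $x$ becomes finite, as in Step 2 of the proof of \Cref{lem:OrbitClassInV}) and then invoking the standard description of the cotangent space $\cm/\cm^2$ of an affine toric variety at its distinguished point in terms of the Hilbert basis of the dual cone, noting that an indecomposable element of $\overline{C}_i^\vee\cap X^*(T)$ that is not on an extremal ray would have to be an interior lattice point of a two-dimensional subcone and hence decomposable after passing to a sufficiently divisible sublattice — which suffices since we only care about the $\bR$-spans of the resulting rays.
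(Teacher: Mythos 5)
Your proposal is correct in substance, but it takes a different local model than the paper's proof, so let me compare the two. The paper replaces $X$ by the orbit closure $\overline{T.x}$, uses \Cref{rem:projective_orbits} to embed it equivariantly into $\bP(H^0(\overline{T.x},\cL))$, reads off the weights of $T_{\bP(V),x_i}$ as the differences $\chi-\chi_i$ with $\chi\in\Phi(x)$, and then realizes each extremal ray \emph{geometrically}: for an extremal direction $\tilde\chi$ one chooses a cocharacter $\tilde\lambda$ vanishing on $\tilde\chi$ and positive on the remaining rays, and the limit curve $\overline{\tilde\lambda.x}$ through $x_i$ supplies a tangent vector of weight proportional to $\tilde\chi$. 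You instead work in a Sumihiro affine chart and compute $\cm/\cm^2$ of $\overline{T.x}\cap U_i=\Spec k[S]$, where $S$ is the monoid generated by $\supp(x)$, via indecomposable monoid elements. Two imprecisions in your write-up are worth flagging, the first of which you correctly identify as the delicate point. First, the tangent space is \emph{not} spanned only by weights on extremal rays: the indecomposables of $S$ can include non-extremal elements (e.g.\ the cone over the rational normal curve, $\Spec k[s,st,\dots,st^n]$, arising as the orbit closure of $(1,\dots,1)$ for $\bG_m^2$ acting with weights $(1,0),\dots,(1,n)$, has $(n+1)$-dimensional tangent space at the origin). As you note, this is harmless because one only needs the generated real cone: all indecomposables lie in $\conv$-theoretic cone spanned by $\supp(x)$, while the minimal element of $\supp(x)$ on each extremal ray is indecomposable (a decomposition $\chi=\alpha+\beta$ forces $\alpha,\beta$ onto that ray and each at least as large as the minimal generator), so the cone and its extremal rays are unchanged; the detour through saturations and ``sufficiently divisible sublattices'' is unnecessary, and one should anyway work with the monoid generated by $\supp(x)$ rather than with the saturated monoid $\overline{C}_i^\vee\cap X^*(T)$, since the orbit closure need not be normal. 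Second, what you call the normal cone of $|\cell(x)|$ at $\wt_{\cL}(i)$ is its tangent (vertex) cone; the identification of this cone with $\overline{C}_i^\vee$ needs both inclusions, the nontrivial one coming from the strict convexity of $\wt_{\cL,x}$ in \Cref{lem:orbitproj}, which you do invoke. On balance, the paper's limit-curve argument avoids any discussion of the monoid algebra and of non-normality, while your route stays entirely inside the affine chart and makes the toric structure of the orbit closure explicit; both are valid.
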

\begin{proof}
	We saw in \Cref{rem:projective_orbits} that the line bundle $\cL$ is ample on the orbit closure $\overline{T.x}$.  In the sequel we will not need that $X$ is smooth, so we replace $X$ by $\overline{T.x}$ and replacing $\cL$ by some power embed $X \subseteq \bP(H^0(X,\cL))$ equivariantly. We abbreviate $V:=H^0(X,\cL)$ and denote by $\Phi(V) \subseteq X^*(T)$ the set of weights of $T$ that appear in the $T$-vector space $V$.
	
	As the $T$-fixed points in $\bP(V)$ are given by the weight spaces, for every fixed point $x_i$ there is a unique weight $\chi_i\in \Phi(V)$ such that the point is contained in the corresponding weight space. The weights of the tangent space $T_{\bP(V),x_i}$ are thus given by the differences $\chi_j-\chi_i$ for $\chi_j\in \Phi(V)$ and $\chi_j=\chi_i$ is allowed if and only if the weight space of the character $\chi_i$ has multiplicity $\geq 2$.
	
	Let us denote by $\Phi(x)\subseteq \Phi(V)$ the weights such that the projective coordinates of $x$ have a non-zero component in the corresponding weight space. As any specialization $x\leadsto x_j$ to a $T$-fixed point can be obtained by a cocharacter $\lambda \in X_*(T)$, i.e., $x_j=\lim_{t\to 0}\lambda(t).x$, and fixed points only have a non-zero coordinate in a single weight space, the fixed points in the orbit closure are given by those elements $\chi_j\in \Phi(x)$ such that there exists a cocharacter $\lambda\in X_*(T)$ such that $$\langle \chi,\lambda \rangle > \langle \chi_j,\lambda \rangle \text{ for all } \chi \in \Phi(x)\smallsetminus \{\chi_j\}.$$
	Thus the vertices of the convex hull of $\Phi(x)$ are the weights defined by $\cell(x)$.
	
	Applying this to a fixed point $x_i\in \overline{T.x}$ implies that 
	$$\supp_{F_i}(x)\subseteq \{ \tau \mid \tau = \chi-\chi_i, \chi\in \Phi(x)\}$$ 
	because these are the weights appearing in the tangent space to $x_i$ in the projective subspace defined by the non-zero coordinates of $x$.
	
	We claim the extremal rays of the right hand side appear in $\supp_{F_i}(x)$. This holds, because the extremal rays are given by characters $\tilde{\chi}=\chi-\chi_i$ such that there exists a cocharacter $\tilde{\lambda}\in X_*(T)$ for which $\langle \tilde{\chi},\tilde{\lambda}\rangle =0$ and $\lr{\chi^\prime-\chi_i,\tilde{\lambda}}>0$ on all other rays $\chi^\prime$. This implies that $\lim_{t\to 0} \tilde{\lambda}(t).x=\tilde{x}$ exists in $X$ and the limit curve $\overline{\tilde{\lambda}.x}$ then shows that the extremal ray defined by the weights of $\tilde{x}$ appears in $\supp_{F_i}(x)$.
	
	Thus the extremal rays of $\supp_{F_i}(x)$ are determined by $\cell(x)$.    
\end{proof}
Together with the computation of cycle classes in $[V/T]$ this allows us to deduce our key result.
\begin{prop}\label{prop:CycleClassByCell}
			Let $X$ be a smooth proper variety equipped with a torus action $T\times X \to X$ that admits a line bundle $\cL$ which is ample on $\bG_m$-orbit closures of closed points. Let $x\in X$ be a point and $s_x$ the order of the quotient $\Stab_{T}(x)/\Stab_T(x)^\circ_{\red}$. Then the cycle class
		$$s_x\cdot\class_{[X/T]}([\overline{T.x}/T]) \in H^*([X/T],\bK)$$
		is determined by $\cell(x)$.
\end{prop}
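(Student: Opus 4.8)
The plan is to compute $\class_{[X/T]}([\overline{T.x}/T])$ by equivariant localization and to show that, after multiplying by $s_x$, each local contribution at a fixed-point component is determined by $\cell(x)$. For a smooth proper variety with a $T$-action the Bia\l ynicki-Birula decomposition exhibits $H^*([X/T],\bK)=H^*_T(X,\bK)$ as a torsion-free module over $H^*(BT,\bK)$, so by the localization theorem the restriction map
$$H^*([X/T],\bK)\longrightarrow \bigoplus_{i\in\pi_0(X^T)} H^*([F_i/T],\bK)$$
is injective. It therefore suffices to show that $s_x\cdot\class_{[X/T]}([\overline{T.x}/T])\big|_{[F_i/T]}$ depends only on $\cell(x)$ for every $i$. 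If $i\notin\cell(x)$ the subvarieties $\overline{T.x}$ and $F_i$ are disjoint and this restriction vanishes. If $i\in\cell(x)$, then by \Cref{prop:geometric_cells} the orbit closure $\overline{T.x}$ is a projective toric variety whose $T$-fixed points are indexed by the vertices $\wt_{\cL}(j)$, $j\in\cell(x)$, of the polytope $|\cell(x)|$; in particular $\overline{T.x}\cap F_i$ is the single point $x_i$ with $\wt(\cL,x_i)=\wt_{\cL}(i)$, and the restricted class is supported there.

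The key step is to identify this restriction with the local computation of \Cref{lem:OrbitClassInV}. Since $X$ is smooth, $F_i$ is a smooth irreducible component of $X^T$, and the fibre $N_i:=(N_{F_i/X})_{x_i}$ of the normal bundle is a $T$-representation with $0\notin\Phi(N_i)$. Using Sumihiro's theorem to linearize the action near $F_i$ — equivalently, using deformation to the normal cone — one checks that the germ of $\overline{T.x}$ at $x_i$ is the germ at $0$ of the orbit closure $\overline{T.y}\subseteq N_i$ of a point $y$ with $0\in\overline{T.y}$, that $\Stab_{T}(y)=\Stab_{T}(x)$, so that the normalizing constants coincide, $s_y=s_x$, and that the extremal rays of the cone spanned by $\supp(y)$ coincide with those of the cone spanned by $\supp_{F_i}(x)$. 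Because $\overline{T.x}\cap F_i$ is zero-dimensional, the restriction equals $\class_{F_i}(x_i)\cdot\beta_i$, where $\class_{F_i}(x_i)\in H^{2\dim F_i}(F_i,\bK)$ is the point class (independent of the chosen point, as $F_i$ is smooth and connected) and $\beta_i=\class_{[N_i/T]}([\overline{T.y}/T])\in H^*(BT,\bK)$ is the class computed by \Cref{lem:OrbitClassInV}.

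Now \Cref{lem:OrbitClassInV} (2) gives that $s_y\cdot\class_{[N_i/T]}([\overline{T.y}/T])$ depends only on the extremal rays of the cone spanned by $\supp(y)$, hence only on the extremal rays of the cone spanned by $\supp_{F_i}(x)$; and \Cref{lem:CellDeterminesRays} identifies these extremal rays with the edge directions $\wt_{\cL}(j)-\wt_{\cL}(i)$ of $|\cell(x)|$ at the vertex $\wt_{\cL}(i)$, which are determined by $\cell(x)$. Therefore $s_x\cdot\beta_i$, and with it $s_x\cdot\class_{[X/T]}([\overline{T.x}/T])\big|_{[F_i/T]}$, depends only on $\cell(x)$; summing over $i$ and using injectivity of the restriction map finishes the proof.

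The main obstacle will be the second step: making rigorous the reduction of the restriction of the global cycle class to the local model on $[N_i/T]$, i.e.\ verifying through the self-intersection / deformation-to-the-normal-cone formalism that the restriction to $[F_i/T]$ is precisely the point class of $F_i$ times the equivariant orbit-closure class of \Cref{lem:OrbitClassInV}, while correctly handling the positive dimension of $F_i$, a possibly non-reduced structure on $\overline{T.x}\cap F_i$, and the comparison of the stabilizer orders $s_x$ and $s_y$. Once this local-to-global identification is in place, \Cref{lem:OrbitClassInV} and \Cref{lem:CellDeterminesRays} supply all of the combinatorics.
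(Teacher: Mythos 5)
Your overall architecture is exactly that of the paper: injectivity of the restriction to fixed-point components via localization, vanishing of the restriction for $i\notin\cell(x)$, identification of the restriction at $i\in\cell(x)$ with the local class of \Cref{lem:OrbitClassInV}, and then \Cref{lem:CellDeterminesRays} to convert the extremal rays into data of $\cell(x)$. So the combinatorial skeleton matches the paper's proof.

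However, the step you yourself flag as ``the main obstacle'' --- the rigorous identification of $\class_{[X/T]}([\overline{T.x}/T])\big|_{[F_i/T]}$ with $\class_{F_i}(x_i)\otimes\beta_i$ --- is precisely where the paper's proof does its work, and as written your proposal contains no argument for it; ``using Sumihiro's theorem \dots equivalently, using deformation to the normal cone --- one checks'' is a placeholder, not a proof. The paper avoids deformation to the normal cone by using the Bia\l ynicki-Birula stratification: it chooses a generic cocharacter $\lambda$ with $\lim_{t\to 0}\lambda(t).x=x_i$ whose fixed components agree with those of $T$ near $F_i$, so that the attracting set $F_i^+$ is an affine bundle $p_i\colon F_i^+\to F_i$ with section $s_i$ and is a \emph{closed} subscheme of an open union of strata $U\subseteq X$. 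The restriction to $[F_i/T]$ is then the composite $s_i^*\iota_F^*j_U^*$, where $j_U^*$ is restriction to an open, $\iota_F^*$ applied to the Gysin pushforward is cup product with the Euler class of the normal bundle of $F_i^+$ in $U$ (the negative tangent directions), and $\overline{T.x}\cap F_i^+$ is contained in the single fiber $F_i^+\times_{F_i}x_i\cong T^+_{F_i,x_i}$, which is where \Cref{lem:OrbitClassInV} is applied. Note also a small discrepancy with your formulation: the paper applies \Cref{lem:OrbitClassInV} in the positive part $T^+_{F_i,x_i}$ and multiplies separately by the Euler class of the negative part, rather than working in the full normal fibre $N_i$; the two bookkeepings agree because Step 1 of \Cref{lem:OrbitClassInV} factors out exactly the weights outside the support. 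To complete your write-up you must either carry out the equivariant deformation-to-the-normal-cone argument in full (including the comparison of $s_x$ with $s_y$ and the scheme structure of the intersection, which you correctly list as issues) or substitute the paper's stratification computation.
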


\begin{proof}
	By the localization theorem for equivariant cohomology (\cite{GKM-equivariant}, \cite[Theorem 1]{BrionVergne}), the restriction map $$H^*([X/T],\bK) \to \bigoplus_{i\in \pi_0(X^T)} H^*([F_i/T],\bK)$$
	is injective.
	
	If $i\not \in \cell(x)$, the cycle class $\class_{[X/T]}([\overline{T.x}/T])$  restricts to $0$ in $H^*([F_i/T],\bK)$, because in this case the restriction map factors through $H^*([X\smallsetminus \overline{T.x}/T],\bK)$. 
	
	If $i\in \cell(x)$, we can reduce the computation of the restriction of the cycle class to $H^*([F_i/T],\bK)$ to the case of vector spaces as follows. Let $\lambda\in X_*(T)$ be a cocharacter such that $\lim_{t\to 0}\lambda(t).x =x_i\in F_i$ is the unique fixed point of the orbit closure that lies in $F_i$. Moreover, for a generic choice of such $\lambda$, the fixed point components $F_i$ coincide with components of the $\lambda$-fixed points of $X$, because a generic $\lambda$ will pair non-trivially with the finitely many characters of $T$ appearing in the normal bundles of the fixed point {components}. The Bia\l ynicki-Birula stratification of $X$ with respect to the $\bG_m$-action defined by the cocharacter $\lambda$ defines an attracting subset $$F_i^+ =\{y\in X \mid \lim_{t\to 0}\lambda(t).y \in F_i\}$$ containing $x$, that is locally over $F_i$ isomorphic to the positive part of the tangent bundle $T_{F_i}^+X\subseteq T_{F_i}X$ to $F_i$ in $X$ (\cite[Theorem 4.1 (b) and (c)]{BBactions}). The stratum $F_i^+$ comes equipped with a projection map $p_i\colon F_i^+ \to F_i$ which is an affine bundle and a section $s_i\colon F_i \to F_i^+$.
	
	Moreover, there exists an open union of strata $U=\cup_{j\in J} F_j^+ \subseteq X$, such that the inclusion $\iota_F: F_i^+ \to U$ is a closed embedding (\cite[Lemma 1.3.1]{BBS-QuotientsRk1} which is formulated over the complex numbers, in \cite[Lemma 4.11]{Quot-Gm} it is checked that this holds over any field). We compute the restriction of the cycle class using the composition of maps
	$$\xymatrix{
		{T_{F_i,x_i}^+ \cong}F^+_{i}\times_{F_i}x_i\ar[d]\ar@{^(->}[r]^-{{\iota_x}}& F_i^+\ar[d]_{p_i}\ar@{^(->}[r]^{{\iota_{F}}}  & U \ar@{^(->}[r]^{j_U} & X\\
		x_i\ar@{^(->}[r]  & F_i {\ar@/_1pc/[u]_{s_i}} & &
	}$$
	where the left square is Cartesian and $F_i^+ \times_{F_i}x_i$ is isomorphic to the positive part of the tangent space to $x_i$ with respect to the $\bG_m$-action given by the cocharacter $\lambda$.
	
	As before let us abbreviate the orbit closure by $Z=\overline{T.x}$. The restriction of the class $\class_{[X/T]}([Z/T])$ to $H^*([F_i/T],\bK)$ is given by the composition of restriction maps
	$$s_i^*\iota_F^*j_U^*\class_{[X/T]}([Z/T]) \in H^*([F_i/T],\bK).$$
	As $j_U$ is an open embedding, $j_U^*\class_{[X/T]}([Z/T])=\class_{[U/T]}([(Z\cap U)/T])$.
	
	As $\iota_F$ is a closed embedding of a smooth subscheme, the composition $\iota_F^*$ with the Gysin map $\iota_{F,*}$ is given by the cup product with the top Chern class $e_{F_i}$ of the normal bundle $T_{F_i^+}U/T_{F_i^+}F_i^+$, so
	$$ \iota_F^*\class_{[U/T]}([(Z\cap U)/T])= \class_{[F_i^+/T]}([(Z\cap F_i^+)/T]) \cup e_{F_i}.$$
	As $p_i$ is the projection from an affine bundle, the pull back $s_i^*$ is an isomorphism on cohomology. 	
	
	Finally the intersection  $Z\cap F_i^+$ is contained in the fiber $F^+_i\times_{F_i}x_i$ over $x_i$ as orbit closures can intersect fixed point components only in a single point. Thus we can again compute $\class_{[F_i^+/T]}([(Z\cap F_i^+)/T])$ as the image of the cycle class $$\class_{[F_i^+\times_{F_i}x_i/T]}([(Z\cap F_i^+ \times_{F_i} x_i)/T])\in H^*([F_i^+\times_{F_i}x_i/T],\bK) \cong H^*(BT,\bK)$$
	under the Gysin morphism $$\iota_{x,*}\colon H^*([F_i^+\times_{F_i}x_i/T],\bK) \to H^{*}([F_i^+/T],\bK) \cong H^*(F_i,\bK)\tensor H^*(BT,\bK)$$
	that raises the degree by $2\dim F_i$.
	
	Thus we find that \begin{align*}
s_i^*\class_{[F_i^+/T]}([(Z\cap F_i^+)/T]) &= (\class_{F_i}(x_i) \tensor \class_{[F_i^+\times_{F_i}x_i/T]}([(Z\cap F_i^+ \times_{F_i} x_i)/T]))\cup {s_i^*e_{F_i}}\\&\in H^*(F_i,\bK) \tensor H^*(BT,\bK).
	\end{align*}
	as the first term $\class_{F_i}(x_i)$ is a generator for the top cohomology $H^*(F_i,\bK)$ the result of the cup product $\cup {s_i^*e_{F_i}}$ only depends on the restriction of $e_{F_i}$ to $H^*(BT)$, which is given by the product of  the negative weights in $T_{F_i}X$. 
	We described the cycle class of the orbit closure of $x$ in $T^+_{F_i,x_i}$ above in \Cref{lem:OrbitClassInV}. Multiplying this class with the $e_{F_i}$ we thus obtain the equivariant cycle class of the orbit closure in $T_{F_i,x_i}\supset T_{F_i,x_i}^+$. We showed in \Cref{lem:CellDeterminesRays} that the resulting class only depends on $\cell(x)$. This shows that the restriction of the cycles class of $[Z/T]$ to $H^*(F_i,\bK)$ is also determined by $\cell(x)$.
\end{proof}
\begin{notn}[Cycle class of a cell]
	If $c\in \cC(X)$ is a cell we define $$\class_{[X/T]}(c) := s_x \cdot \class_{[X/T]}([\overline{T.x}/T]) \in H^*([X/T],\bK)$$
	for any $x\in X$ with $\cell(x)=c$. By the above proposition, this is independent of the choice of $x$.
\end{notn}
As for cones, the classes of cells are additive with respect to subdivisions.
\begin{lem}\label{lem:AddCycleClass}
	Let $X$ be a smooth proper variety equipped with a torus action $T\times X \to X$ that admits a line bundle $\cL$ which is ample on $\bG_m$-orbit closures of closed points. Let $c\in  \cC(X)$ be a cell, $\mathring{|c|}= \cupdot_{j\in I} \mathring{|c_j|}$ a subdivision of $c$ and let $c_1,\dots,c_s$ be the cells of maximal dimension in this decomposition. Then
	$$\class_{[X/T]}(c)= \sum_{j=1}^s \class_{[X/T]}(c_j).$$
\end{lem}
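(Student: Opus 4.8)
The plan is to restrict to the fixed-point components via the localization theorem and reduce everything to additivity of cone classes, i.e. to \Cref{lem:AdditiveCones}. By the localization theorem used in the proof of \Cref{prop:CycleClassByCell}, the restriction map $H^*([X/T],\bK)\to\bigoplus_{i\in\pi_0(X^T)}H^*([F_i/T],\bK)$ is injective, so it suffices to verify the identity after restriction to each $F_i$. Fix $x\in X$ with $\cell(x)=c$, and for each maximal $c_j$ a point $y_j\in X$ with $\cell(y_j)=c_j$. From the proof of \Cref{prop:CycleClassByCell} we recall the shape of the restriction: $\class_{[X/T]}(c)$ restricts to $0$ on $F_i$ unless $i\in c$, and for $i\in c$ its restriction is a factor depending only on $i$ — a point class of the (connected) $F_i$ tensored with the Euler class of the negative normal bundle of $F_i$ — cupped with the cone class $\class_{[T_{F_i}X/T]}(\overline C(c,i))\in H^*(BT,\bK)$, where $\overline C(c,i)$ is the cone spanned by $\supp_{F_i}(x)$. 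By \Cref{lem:CellDeterminesRays} this cone is the vertex cone of the polytope $|c|$ at $\wt_\cL(i)$, so it depends only on $c$ and $i$; and since the weights of the tangent representation are constant along the connected $F_i$, these cone classes are well defined independently of which fixed point of $F_i$ occurs, so the ``common factor'' really is the same for $c$ and for all the $c_j$ with $i\in c_j$. It therefore suffices, for each $i$, to prove the identity of cone classes obtained after stripping off this factor.

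Consider first $i\in c$, so $\wt_\cL(i)$ is a vertex of $|c|$. An extreme point of a convex set stays extreme in any smaller convex set containing it, so $\wt_\cL(i)$ is a vertex of $|c_j|$ for every $c_j$ whose polytope contains it; hence the maximal cells of the subdivision meeting $\wt_\cL(i)$ are exactly those with $i\in c_j$, and near $\wt_\cL(i)$ the subdivision $\{|c_j|\}$ exhibits $\overline C(c,i)$ as the union of the vertex cones $\overline C(c_j,i)$ of these maximal $c_j$ — a subdivision into strictly convex cones of the same dimension. By \Cref{lem:AdditiveCones}(1), $\class_{[T_{F_i}X/T]}(\overline C(c,i))=\sum_{j:\,i\in c_j}\class_{[T_{F_i}X/T]}(\overline C(c_j,i))$, the non-maximal $c_j$ dropping out as they contribute classes of strictly lower codimension. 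Cupping with the common factor yields the restriction to $F_i$ of the asserted identity.

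Now let $i\notin c$. If $\wt_\cL(i)\notin|c|$, then $\wt_\cL(i)\notin|c_j|$ for all $j$, so $i\notin c_j$ and both sides restrict to $0$ on $F_i$. If $\wt_\cL(i)\in|c|$ but is not a vertex of $|c|$, then $\class_{[X/T]}(c)$ still restricts to $0$ on $F_i$, while $\class_{[X/T]}(c_j)$ restricts on $F_i$ to (common factor)$\,\cup\,\class_{[T_{F_i}X/T]}(\overline C(c_j,i))$ for those $j$ with $i\in c_j$; so we must show $\sum_{j:\,i\in c_j}\class_{[T_{F_i}X/T]}(\overline C(c_j,i))=0$. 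The vertex cones of the maximal cells meeting $\wt_\cL(i)$ subdivide the vertex cone of $|c|$ at $\wt_\cL(i)$, which is now non-strictly-convex; the strictly convex ones among them are indexed precisely by $\{j:i\in c_j\}$, and the rest have vanishing cone class since an orbit closure whose support spans a non-strictly-convex cone does not contain $0$ (\Cref{lem:OrbitClassInV}(1)). Running the degeneration argument of the proof of \Cref{lem:AdditiveCones}(2) for a point of the representation $T_{F_i}X$ whose support spans the rays occurring in this subdivision — these again lie in no open half-space, so the corresponding orbit closure has vanishing class — gives $\sum_{j:\,i\in c_j}\class_{[T_{F_i}X/T]}(\overline C(c_j,i))=0$, as required. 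This verifies the identity on every $F_i$ and hence proves the lemma.

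The one step needing care beyond a routine combination of earlier results is this last case: when the subdivision creates a new vertex $\wt_\cL(i)$ interior to $|c|$ (or interior to a proper face), cone-class additivity must be invoked in the non-strictly-convex situation, which is a mild extension of \Cref{lem:AdditiveCones}(2) rather than a direct application of it. One could instead sidestep this by first passing to a common refinement of the given subdivision with a triangulation and introducing new vertices one at a time, but the bookkeeping is essentially the same. All the remaining ingredients are formal: the localization description of cycle classes and the identification of $\overline C(c,i)$ with the vertex cone of $|c|$ are furnished by \Cref{prop:CycleClassByCell} and \Cref{lem:CellDeterminesRays}, and the combinatorial input is \Cref{lem:AdditiveCones}(1).
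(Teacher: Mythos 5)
Your proposal is correct and follows essentially the same route as the paper: restrict to fixed-point components via localization, identify the restrictions with vertex-cone classes through \Cref{prop:CycleClassByCell} and \Cref{lem:CellDeterminesRays}, and conclude by additivity of cone classes (\Cref{lem:AdditiveCones}). Your explicit treatment of new vertices created by the subdivision, where the relevant cone is non-strictly-convex but not all of $X^*(T)_\bQ$ and one must rerun the degeneration argument of \Cref{lem:AdditiveCones}(2), is a legitimate point of care that the paper's own proof passes over in a single sentence.
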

\begin{proof}
	As the restriction map $$H^*([X/T],\bK) \to \bigoplus_{i\in \pi_0(X^T)} H^*([F_i/T],\bK)$$
	is injective (\cite{GKM-equivariant}, \cite[Theorem 1]{BrionVergne}), it suffices to prove the equality for the restrictions of the classes to the fixed point components $F_i$.  
	
	We saw in the proof of \Cref{prop:CycleClassByCell}, that the restriction of $\class_{[X/T]}(c_j)$ to $H^*([F_i/T],\bK)=H^*(F_i,\bK)\otimes H^*(BT,\bK)$ for $i\in c_j$ can be computed as the product of the cycle class of a point in $F_i$ and the class of the cone $\overline{C}_i$ spanned by the differences $\wt_{\cL}(\ell)-\wt_{\cL}(i)$ for $\ell\in c_j$, considered as weights of the representation of $T$ given by the fiber $N_{i,x}$ of the normal bundle $N_i$ to $F_i$ at a point $x\in F_i$.
	
	As $\mathring{|c|}= \cupdot_{j\in I} \mathring{|c_j|}$ is a subdivision of the cell $c$, the restriction of $\sum_{j=1}^s \class_{[X/T]}(c_j)$ therefore vanishes in $H^*([F_i/T],\bK)$ for all $i$ that are not vertices of $c$ and coincides with the restriction of $\class_{[X/T]}(c)$ at the vertices by \Cref{lem:AdditiveCones}.
\end{proof}

\section{Proof of the main result for geometric quotients}\label{sec:geometric_quotients}

We can now prove our main result (\Cref{thm:GeometricQuotients}) that any non{-}empty open substack of $[X/T]$ that admits a proper geometric quotient is defined by a geometric moment measure.

Suppose that $j_U\colon [U/T]\hookrightarrow [X/T]$ is an open substack that admits a proper geometric quotient $p\colon [U/T] \to U/\!/T$ of dimension $\dim U/\!/T = d= \dim X-\dim T$. 

We claim that the map 
\begin{align*}
	m \colon \cC(X) &\to \{0,1\}\\
	c &\mapsto \left\{ \begin{array}{cl}
		1 & \text{if $c$ is of maximal dimension and } j_U^*\class(c) \neq 0 \in H^*([U/T],\bK)\\
		0 & \text{otherwise} 
	\end{array}\right.
\end{align*} 
is a geometric moment measure for which $U=U_{(m)}$.

To show this note that as $U/\!/T$ is irreducible and proper of dimension $d$ the top cohomology group $$H^{2d}( U/\!/T,\bK)=\bK$$ is one-dimensional and it is generated by the cycle class of any smooth closed point. Moreover, as $U\mmod T$ is a geometric quotient the map $p$ induces an isomorphism on cohomology groups
$$p^*\colon H^*( U/\!/T,\bK) \map{\cong} H^*([U/T],\bK).$$

For a point $x\in X$ the condition $x\in U$ is equivalent to the condition $T.x \subseteq U$. As $U\subseteq X$ is open and admits a geometric quotient this is equivalent to the condition $\overline{T.x} \cap U\neq 0$ and it implies that $\Stab_T(x)$ is finite of order $s_x$. Thus the cycle class $s_x \cdot \class_{[X/T]}([\overline{T.x}/T]) \in H^{2d}([X/T],\bK)$ restricts to the cycle class  $s_x \cdot \class_{[U/T]}([\overline{T.x}\cap U/T],\bK) \in H^{2d}([U/T],\bK) \cong H^{2d}(U/\!/T,\bK)$ and this class is the cycle class of a smooth closed point if and only if $\overline{T.x}\cap U$ is non-empty.

Thus we find that $x\in U$ if and only if $j_U^*\class_{[X/T]}([\overline{T.x}/T])\neq 0$. 

In particular $m$ maps the generic cell to $1$ and for any subdivision of a top-dimensional cell $c$ into top-dimensional cells $\{c_i\}_{i=1,\dots,s}$, we have $$\sum_{i=1}^s j_U^*\class_{[X/T]}(c_i)=j_U^* \sum_{i=1}^s \class_{[X/T]}(c_i) =j_U^*\class_{[X/T]}(c),$$ 
because we can check this for the restrictions of the classes to the cohomology of the fixed point components $H^*([F_i/T],\bK)$, where the result follows from the local computation for cones (\Cref{lem:AdditiveCones}). 

As for a cell of maximal dimension the restriction of $\class_{[X/T]}(c_i)$ to $H^*([U/T],\bK)$ is either $0$ or the cycle class of a smooth closed point, the function $m$ will be non-zero on exactly one of the $c_i's$. Thus $m$ is a geometric moment measure and $x\in U$ if and only if $m(\cell(x))=1$, i.e., $U=U_{(m)}$.

\section{Proof of the main result for good quotients}

Let us now show the main result \Cref{thm:GeometricQuotients} for good, but not necessarily geometric quotients.
Suppose that $j_U\colon [U/T]\hookrightarrow [X/T]$ is an open substack that admits a proper good quotient $p\colon [U/T] \to U/\!/T$. To show that the subscheme $U\subseteq X$ is defined by a moment measure, let us first show, that the points in $U$ can again be detected by a cohomological criterion. 
\begin{prop}\label{prop:nontrivclass}
	A closed point $x\in X$ is contained in $U$ if and only if the equivariant cycle class $\class_{[X/T]}([\overline{T.x}/T])\in H^*([X/T],\bK)$ of its orbit closure restricts to a non-trivial class $j_U^*\class_{[X/T]}([\overline{T.x}/T]) \neq 0 \in H^*([U/T],\bK)$.
\end{prop}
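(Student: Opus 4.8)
The plan is to prove the two implications separately; the ``only if'' part is immediate and the ``if'' part is the substantial one. If $x\notin U$, then $x$ lies in the closed $T$-invariant subset $X\smallsetminus U$, so $\overline{T.x}\subseteq X\smallsetminus U$ and the closed substack $[\overline{T.x}/T]$ is disjoint from $[U/T]$. Hence $j_U^*$ factors through $H^*([X\smallsetminus\overline{T.x}/T],\bK)$, which annihilates $\class_{[X/T]}([\overline{T.x}/T])$, and $j_U^*\class_{[X/T]}([\overline{T.x}/T])=0$.

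For the converse, assume $x\in U$ and set $Z:=\overline{T.x}\cap U$, a nonempty irreducible closed $T$-invariant subscheme of $U$; since $j_U$ is an open immersion, $j_U^*\class_{[X/T]}([\overline{T.x}/T])=\class_{[U/T]}([Z/T])$, so it suffices to show $\class_{[U/T]}([Z/T])\neq 0$. The first step is to reduce to a finite stabilizer. Let $T_x:=\Stab_T(x)^\circ_{\red}$; since $T$ is abelian, $T_x$ fixes $\overline{T.x}$ pointwise, so $\overline{T.x}$, and therefore $Z$, is contained in the smooth closed $T$-invariant subvariety $Y:=X^{T_x}$, on which $T$ acts through $\bar T:=T/T_x$, now with $x$ having finite $\bar T$-stabilizer. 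Choosing an isogeny splitting $T\sim T_x\times\bar T$ one has $[U\cap Y/T]\cong[U^{T_x}/\bar T]\times BT_x$, a smooth closed substack of the smooth stack $[U/T]$; write $\iota$ for this closed immersion and $N:=N_{U^{T_x}/U}$ for its normal bundle. As $[Z/T]\hookrightarrow[U/T]$ factors through $\iota$, functoriality of cycle classes together with the self‑intersection formula gives
\[
\iota^*\class_{[U/T]}([Z/T])=\class_{[U\cap Y/T]}([Z/T])\cup e(N)\in H^*([U\cap Y/T],\bK).
\]
Here $T_x$ acts with only nonzero weights on the fibres of $N$ (it is the moving part of $T_U$ along the fixed locus $U^{T_x}$), so under the Künneth decomposition $H^*([U\cap Y/T],\bK)=H^*([U^{T_x}/\bar T],\bK)\otimes H^*(BT_x,\bK)$ the Euler class $e(N)$ has leading $H^*(BT_x)$‑term the nonzero product of those weights; a short filtration argument then shows $e(N)$ is a non‑zero‑divisor. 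Consequently $\class_{[U/T]}([Z/T])\neq 0$ as soon as $\class_{[U\cap Y/T]}([Z/T])\neq 0$, i.e. as soon as $\class_{[U^{T_x}/\bar T]}([Z/\bar T])\neq 0$.

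It remains to treat the finite‑stabilizer case. Since $U^{T_x}=X^{T_x}\cap U$ is closed and $T$-invariant in $U$, the substack $[U^{T_x}/\bar T]$ admits a proper good moduli space $\bar p\colon[U^{T_x}/\bar T]\to(U^{T_x})\mmod\bar T$: its good moduli space is the image of $[U^{T_x}/T]$ in $U\mmod T$, a closed subspace of the proper space $U\mmod T$, hence proper. Because $x$ has finite $\bar T$-stabilizer its orbit is open dense in $Z$ of dimension $\dim\bar T$, so $\dim[Z/\bar T]=0$ and $Z\mmod\bar T=\Spec k$; pushing the fundamental class of $[Z/\bar T]$ forward along $\bar p|_{[Z/\bar T]}\colon[Z/\bar T]\to\Spec k$ yields a positive multiple of the point class, hence is nonzero. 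Therefore the image of $[[Z/\bar T]]$ in the Borel–Moore homology of $[U^{T_x}/\bar T]$ is nonzero, and Poincaré duality on the smooth stack $[U^{T_x}/\bar T]$ turns this into $\class_{[U^{T_x}/\bar T]}([Z/\bar T])\neq 0$, as required.

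The heart of the argument — and the main obstacle beyond the geometric‑quotient case of \Cref{sec:geometric_quotients} — is the last step: $[U/T]$ is not proper, so a nonempty closed substack can perfectly well have vanishing cycle class (in positive degree the class vanishes on every affine chart), and non‑vanishing is detected only by pushing down to the proper good moduli space. The technical subtlety is that the naive pushforward $p_*\class_{[U/T]}([Z/T])$ vanishes once $\Stab_T(x)$ is positive‑dimensional, which is precisely why one must first pass to $Y=X^{T_x}$ and compensate with $e(N)$; verifying that this Euler class is a non‑zero‑divisor, and that the Gysin‑theoretic manipulations are legitimate for the quotient stacks at hand (which is ensured by \cite{EdidinGraham} and the setup of \Cref{sec:equivariantclasses}), is then routine.
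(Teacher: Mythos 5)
Your ``only if'' direction and your reduction to finite stabilizers via the Euler class of the normal bundle of $X^{T_x}$ (which matches the paper's Step~2) are fine. The proof breaks down in the ``finite-stabilizer case'' at the end, in two distinct places.

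First, even when $T.x$ is closed in $U$ with finite stabilizer, the argument ``push the fundamental class of $[Z/\bar T]$ forward to $\Spec k$ and apply Poincar\'e duality'' is not available: $[U^{T_x}/\bar T]$ is not proper (only its good moduli space is), so there is no pushforward to a point, and the class of a point in a non-proper space can perfectly well vanish (e.g.\ $H^{BM}_0(\bA^1)=0$). Properness of the good moduli space must enter through the map $p^*\colon H^{2d}(U\mmod T,\bK)\to H^{2d}([U/T],\bK)$, and the nontrivial input is that this map is \emph{injective}, i.e.\ that the (one-dimensional) top cohomology of the proper space $U\mmod T$ is a direct summand of $H^*([U/T],\bK)$. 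This is the decomposition-theorem ingredient (\cite{Woolf}, \cite{Kinjo-decomposition_good_moduli}, \Cref{Prop:semisimplepure}) that the paper invokes in its Step~1 and that your proof replaces by an unjustified duality claim.

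Second, the case of non-closed orbits is not actually handled. You quotient by $T_x=\Stab_T(x)^\circ_{\red}$, the stabilizer of $x$ itself, and then treat $[Z/\bar T]$ with $Z=\overline{T.x}\cap U$ as if it were a finite gerbe over a point. But if $T.x$ is not closed in $U$, then $Z$ contains the unique closed orbit $T.x_1$, whose stabilizer is strictly larger than that of $x$; hence $[Z/\bar T]$ has points with positive-dimensional automorphism groups, its map to $Z\mmod\bar T=\Spec k$ is a good moduli space morphism rather than a proper representable one, and ``a positive multiple of the point class'' is not defined. This is exactly the case the paper's Step~3 is devoted to: one localizes instead to the fixed locus of $\Stab_T(x_1)^\circ_{\red}$ for the \emph{closed} orbit, uses the Bia\l ynicki-Birula stratum of a cocharacter degenerating $x$ to $x_1$ to identify $\overline{T.x}\cap U$ inside $T.x_1\times V_1$, and applies \Cref{lem:OrbitClassInV} to write the class as $\class([T.x_1/T])$ cupped with an explicit non-zero-divisor. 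Without an argument of this kind your proof establishes the proposition only for points whose orbit is already closed in $U$.
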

\begin{proof}
As $j_U^*\class_{[X/T]}([\overline{T.x}/T])=\class_{[U/T]}([(\overline{T.x} \cap U)/T])=0$ if $x\not\in U$ we only need to show that the cycle class does not restrict to $0$ if $x\in U$.

\vspace{0.5em}

\noindent{\sc Step 1: Points with closed orbits of maximal dimension.}
Let us denote by $$\overline{j}_{U^{st}}\colon (U\mmod T)^{st} \hookrightarrow U\mmod T$$ the (possibly empty) open subset over which the fibers of $q\colon U \to U\mmod T$ consist of a single orbit of maximal dimension and let $$j_{U^{st}}\colon U^{st}:=q^{-1}((U\mmod T)^{st})\hookrightarrow U$$ be the inclusion of the preimage in $U$:
\[
\begin{tikzcd}
	U^{st} \ar[r,hook] \ar[d,"q^{st}:=q|_{U^{st}}"'] \arrow[dr, phantom, "\ulcorner"] & U \ar[d,"q"] \\
	(U\mmod T)^{st} \ar[r,hook] & U\mmod T.
\end{tikzcd}
\]
As $q: U \to  U\mmod T$ is a good quotient, the restriction $q^{st}\colon U^{st} \to (U\mmod T)^{st}$ is a geometric quotient, i.e., $p^{st}\colon [U^{st}/T] \to (U\mmod T)^{st}$ is a coarse moduli space. 

Let us first assume that $x\in U$ is a point, such that its $T$-orbit $T.x\subseteq U$ is closed in $U$ 
and its stabilizer $\Stab_{T}(x)$ is finite. Then $x\in U^{st}$, because all fibers of $q$ contain a unique closed orbit and points with finite stabilizers are the points whose orbits have maximal dimension. In this case we can argue as in the proof for geometric quotients (\Cref{sec:geometric_quotients}) that the cycle class is a multiple of the cycle class of a smooth closed point that generates the top cohomology of $U\mmod T$: The map $p^{st}$ induces an isomorphism on cohomology sheaves $\bR p^{st}_*\un{\bK} =\un{\bK}$ and thus the induced map $p^{st,*}\colon H^*_c((U\mmod T)^{st},\bK) \to H^*_c([U^{st}/T],\bK)$ is an isomorphism. As $U\mmod T$ is irreducible and proper, the morphism
$$H^*_c([U^{st}/T],\bK)\cong H^*_c((U\mmod T)^{st},\bK) \to H^*(U\mmod T,\bK)$$ 
is an isomorphism in the top cohomological degree $2\dim U\mmod T$ and this group is generated by the cycle class of any smooth closed point. As the class $s_x\cdot \class_{[U^{st}/T]}([T.x/T])$ is independent of the point $x\in U^{st}$ by \Cref{prop:CycleClassByCell}, it is mapped to a non-zero class in $H^*(U\mmod T,\bK)$ under the above morphism. The image of this non-zero class under the pull back map $p^*\colon H^*(U\mmod T,\bK)\to H^*([U/T],\bK)$ coincides with the cycle class $s_x\cdot \class_{[U/T]}([T.x/T])$, because for a point $x\in U^{st}$ mapping to a smooth point $q(x)$ in $(U\mmod T)^{st}$ we get a Cartesian diagram:
$$\xymatrix{
	[T.x/T] \ar[d]\ar@{^(->}[r]^{\iota_x} & [U/T] \ar[d]^p \\
	 q(x) \ar@{^(->}[r]^{\iota_{q(x)}} & U\mmod T
}$$
and the corresponding Gysin morphisms $\iota_{x,!}\iota_{x}^! \un{\bK} \to \un{\bK}$ and $\iota_{q(x),!}\iota^{!}_{q(x)} \un \bK \to \un{\bK}$ from which the cycle classes are defined, can be computed locally. As this construction commutes with smooth base change, and $p$ is smooth around $q(x)$ we conclude that $s_x\cdot \class_{[U/T]}([T.x/T])$ is the image of a generator of the top cohomology of $U\mmod T$ under the pullback map.

Finally the top cohomology of $H^*(U\mmod T,\bK)$ is a direct summand of $H^*([U/T],\bK)$, because the one dimensional top cohomology of $H^*(U\mmod T,\bK)$ coincides with intersection cohomology and this is a direct summand of $H^*([U/T],\bK)$ by \cite[Corollary 3.5]{Woolf} in characteristic $0$. The argument in \cite{Kinjo-decomposition_good_moduli} comparing the cohomology sheaves of the quotient map $[U/T]\to U\mmod T$ with the cohomology of a projective morphism as indicated in  \Cref{sec:equivariantclasses} also applies for étale cohomology, see \Cref{Prop:semisimplepure}. This shows that the cycle class $\class_{[U/T]}([T.x/T])$ in $H^*([U/T],\bK)$ is also non-zero. 

\vspace{0.5em}

\noindent{\sc Step 2: Points with closed orbits.}
If the $T$-orbit of $x$ in $U$ is closed, but $\Stab_{T}(x)$ is not finite, let us again denote by $T_x:=\Stab_{T}(x)^\circ_\text{red} \subseteq T$ the reduced identity component of the stabilizer, which is a subtorus of $T$. As before the fixed point set $X^{T_x}\subseteq X$ is a union of smooth connected subschemes of $X$ and as $T$ is abelian and connected, both $X^{T_x}$ and its components are $T$-{invariant}. Moreover $T_x$ acts with non-trivial characters on the normal bundle of $X^{T_x}$ in $X$. Let $F \subseteq X^{T_x}$ be the irreducible component containing $x$. As $F \subseteq X$ is closed, so is $[(F \cap U)/T] \subseteq [U/T]$ and therefore the (scheme theoretic) image of {$[(F\cap U)/T]$} in $U\mmod T$ is a good moduli space (\cite[Lemma 4.14]{alper-good}) and it is closed in $U\mmod T$ (\cite[Theorem 4.16]{alper-good}), so it is again proper. We denote this good moduli space by $(F \cap U)\mmod T$.

As $T_x$ acts trivially on $F$ the action of $T$ on $F$ factors through the quotient $T/T_x$ and $(F \cap U)\mmod T \cong (F \cap U)\mmod(T/T_x)$ is a good quotient for the $T/T_x$-action. The action also defines a Cartesian diagram
$$\xymatrix{
[F/T]\ar[r]\ar[d] & BT \ar[d]\\ [F/(T/T_x)]\ar[r] & B(T/T_x)}$$
and as the cohomology of $BT$ is the symmetric algebra on $X^*(T)_\bK$, base change for the proper horizontal maps shows that $$H^*([F/T],\bK) \cong H^*( [F/(T/T_x)],\bK) \tensor H^*(BT_x,\bK).$$

As the stabilizer of $x$ in $T/T_x$ is finite, the orbit $(T/T_x).x=T.x \subseteq F \cap U$ is closed and $(F \cap U)\mmod (T/T_x)$ is proper, we can apply the first step of the proof to obtain that {$\class_{[(F\cap U)/(T/T_x)]}([T.x/(T/T_x)])\in H^*([(F\cap U)/(T/T_x)],\bK)$} is non-trivial.

As $[(F\cap U)/T] \cong [(F\cap U)/(T/T_x)] \times BT_x$, the pull back map $$H^*([(F\cap U)/(T/T_x)],\bK) \to H^*([(F\cap U)/T],\bK)$$ is injective and commutes with the formation of cycle classes. Thus the result for $\class_{[(F\cap U)/(T/T_x)]}([T.x/(T/T_x)])$ implies the same result for $\class_{[(F\cap U)/T]}([T.x/T])\in H^*([(F\cap U)/T],\bK)$.

The cycle class $\class_{[U/T]}([T.x/T])\in H^{*}([U/T],\bK)$ is the image of this class under the Gysin map $H^{*-2c}([(F\cap U)/T],\bK) \to H^{*}([U/T],\bK),$ where $c=\dim U-\dim F \cap U$. This Gysin map is injective by the argument for Kirwan surjectivity (\cite{kirwan-surj}), because the composition 
\begin{align*}
H^{*-2c}([(F\cap U)/T],\bK) \to H^{*}([U/T],\bK) &\to H^{*}([(F\cap U)/T],\bK)\\&\cong H^{*}([(F\cap U)/(T/T_x)],\bK)\tensor H^*(BT_x,\bK)
\end{align*} of the Gysin map with the restriction map is the cup product with the top Chern class of the normal bundle of $F \cap U \subseteq U$. As $T_x$ acts with non-trivial characters on the normal bundle and the Künneth-component of the top Chern class in $H^0([(F\cap U)/(T/T_x)],\bK)\tensor H^*(BT_x,\bK)$ is given by the product of these characters, the cup product with this class is injective. 

This shows that the equivariant cycle classes of closed orbits in $U$ are non-trivial in $H^*([U/T],\bK)$ if $U\mmod T$ is proper.

\vspace{0.5em}

\noindent{\sc Step 3: Points with non-closed orbits.} If the orbit of $x$ is not closed in $U$, the orbit closure $\overline{T.x}\cap U$ in $U$ contains a unique closed orbit $T.x_1$, because $[U/T]\tto U\mmod T$ is a good quotient (\cite[Theorem 4.16]{alper-good}). As the dimension of the closed orbit $T.x_1$ is smaller than the dimension of $T.x$, the stabilizer $\Stab_{T}(x)\subsetneq \Stab_{T}(x_1)$ of $x_1$ is of larger dimension than the stabilizer of $x$. In particular the stabilizer of $x_1$ has positive dimension.

To conclude that the cycle class $\class_{[U/T]}([(\overline{T.x}\cap U)/T])\in H^*([U/T],\bK)$ is non-zero we argue by localization to the fixed point stratum of the stabilizer of the closed orbit $T.x_1$.
	
Again let $T_1:=\Stab_{T}(x_1)^\circ_{\red}$ be the reduced identity component of the stabilizer, let $F \subseteq X^{T_1}$ be the irreducible component of the $T_1$ fixed point locus in $X$ that contains $x_1$ and let $F_U:=F\cap U$ denote the intersection of $F$ with $U$, so that $F_U\mmod T \subseteq U\mmod T$ is a proper good moduli space. 

We claim that the restriction of the cycle class $\class_{[X/T]}(\overline{T.x}/T)$ to $$H^*([F_U/T],\bK)\cong H^*([F_U/(T/T_1)],\bK)\tensor H^*(BT_1,\bK)$$ is of the form $\class_{[F_U/T]}([T.x_1/T]) \cup e$, where $e$ is not a zero-divisor. As we already proved that the cycle classes of closed orbits are non-zero, this will prove our claim that the cycle class of $\overline{T.x}$ is non-zero as it restricts to a non-zero class in $H^*([F_U/T],\bK)$.

As $T.x$ specializes to the closed orbit $T.x_1 \subseteq U$ we may choose the point $x_1$ defining the closed orbit so that, there exists a cocharacter $\lambda \colon \bG_m \to T$ satisfying $x_1 = \lim_{t\to 0}\lambda(t).x$. As $x_1$ is fixed by $\lambda$, we have that $\lambda\colon \bG_m \to T_1 \subseteq T$ is a cocharacter for $T_1$.

Let us denote by $F^+_U \subseteq U$ the locus of points $y\in U$ such that $  \lim_{t\to 0}\lambda(t).y \in F_U$, i.e., the Bia\l ynicki-Birula stratum corresponding to $F_U$, which comes equipped with a retraction $\pi\colon F_U^+\to F_U$. In particular, locally on $F_U$ we can describe  $F_U^+$ in terms of the normal bundle to $F_U$, namely let $$T_{F_U}= \bigoplus_{\chi \in X^*(T_1)} T_{F_U,\chi}$$
be the decomposition of the tangent bundle into weight spaces for the $T_1$-action. Then 
$$T_{F_U}^+ := \bigoplus_{\chi \in X^*(T_1) \atop \langle \chi,\lambda\rangle>0} T_{F_U,\chi}$$
is locally over $F_U$ equivariantly isomorphic to $F_U^+$ \cite[Theorem 4.1 (b)]{BBactions}.
 
As $T$ is abelian and $F_U$ is $T$-invariant, the stratum $F_U^+$ is $T$-invariant as well. Also $F_U^+\subseteq U$ is closed in $U$, because $U$ admits a good quotient and is therefore $\Theta$-reductive (\cite[Proposition 3.13 and Remark 3.14]{ahlh}), in terms of orbit closures we can also see this directly, because any orbit closure $\overline{T.y}$ contains a unique closed orbit in $U$, but as $F_U\subseteq F$ is closed orbit closures of points in $F_U^+$ have to lie in $F_U$. 	

If we denote $F_{T.x_1}^+:=\pi^{-1}(T.x_1)$ we thus obtain a commutative diagram:
$$\xymatrix{
 \overline{T.x}\cap U \ar@{^(->}[r]\ar[dr] & F^+_{T.x_1}\ar[d]^{\pi}\ar@{^(->}[r] & F_U^+\ar[d]^{\pi}\ar@{^(->}[r] & U\ar@{=}[d]\\
 & T.x_1\ar@{^(->}[r]  & F_U\ar@{^(->}[r]  & U	
}$$
in which the horizontal arrows are closed embeddings.

The cycle class of $[(\overline{T.x}\cap U)/T]$ in $H^*([U/T],\bK)$ can thus be computed from $\class_{[F_{T.x_1}^+/T]}([(\overline{T.x}\cap U)/T])\in H^*([F_{T.x_1}^+/T],\bK)$ as the image under the composition of the  Gysin homomorphisms 
$$ H^*([F^+_{T.x_1}/T],\bK) \to H^{*+2f}([F^+_{U}/T],\bK) \to 
H^{*+2f+2c}([U/T],\bK).$$
where $f=\dim F_U^+-\dim F_{T.x_1}^+=\dim F_U-\dim T.x_1$. As in Step 2, the second morphism is injective, because the composition of the Gysin homomorphism with the restriction to $H^*([F^+_U/T],\bK)$ is the cup product with the top Chern class of the normal bundle to $F_U^+$. Now $\pi$ is an affine fibration so $$H^*([F^+_U/T],\bK)\cong H^*([F_U/T],\bK)$$ and the restriction of the normal bundle of $F_U^+$ to $F_U$ is 
$$T_{F_U}^- := \bigoplus_{\chi \in X^*(T_1) \atop \langle \chi,\lambda\rangle<0} T_{F_U,\chi},$$
so that the stabilizer $T_1$ again acts with non-trivial characters on the normal bundle and therefore the top Chern class is not a zero divisor in $H^*([F_U/T],\bK)=H^*([F_U/(T/T_1)],\bK) \tensor H^*(BT_1,\bK)$. 

The cycle class of $[F^+_{T.x_1}/T]$ in $H^*([F_U^+/T],\bK)\cong H^*([F_U/T],\bK)$ agrees with the cycle class of $[T.x_1/T]$ in $H^*([F_U/T],\bK)$, as $F^+_{T.x_1}$ is the pull-back of $T.x_1$. 

To compute the cycle class $\class_{[F_{T.x_1}^+/T]}([(\overline{T.x}\cap U)/T])$ we note that as $[T.x_1/T]= B\Stab_{T}(x_1)$ is the classifying stack of a multiplicative group, the restriction of the fibration $\pi$ to $T.x_1$ is equivariantly isomorphic to the positive part of the tangent space $T_{F_U,x_1}^+:=V_1$, i.e., $$F_{T.x_1}^+\cong T.x_1 \times V_1$$
where $V_1$ is a representation of $T$ on which $T_1\subseteq T$ acts with non-trivial characters only. Splitting $T\cong T_1 \times T/T_1$ we see that the closure $\overline{T.x}$ in $F_{T.x_1}^+\cong T.x_1 \times V_1$ is the product of $T.x_1$ with the closure of the $T_1$-orbit in $V_1$, for which we computed the cycle class in \Cref{lem:OrbitClassInV} as non-trivial expression in $H^*(BT_1,\bK)$ in terms of the characters of $T_1$ that appear in $V_1$. As these classes are the restriction of the first Chern classes of the bundles $T_{F_U,\chi}$ which are non-zero divisors in $H^*([F_U/T],\bK)$ this proves our claim that we can express $\class_{[F_U^+/T]}([\overline{T.x}\cap U/T])$ as product of the cycle class of $[T.x_1/T]$ with a non-zero divisor.
\end{proof}

As by \Cref{prop:CycleClassByCell} cycle classes of orbit closures only depend on the cell of the corresponding point, the above proposition again shows that the condition for a point $x\in X$ to lie in $U$, only depends on $\cell(x)$. This allows us to construct a moment measure from $U$.
\begin{prop}
	Let $X$ be a smooth proper variety equipped with a torus action $T\times X \to X$ that admits a line bundle $\cL$ which is ample on $\bG_m$-orbit closures of closed points and $U\subseteq X$ an open $T$-invariant subscheme that admits a proper good quotient $[U/T]\to U\mmod T$. Then the map 
	\begin{align*}
		m \colon \cC(X) &\to \{0,1\}\\
		c &\mapsto \left\{ \begin{array}{cl}
			1 & \text{if }c=\cell(x) \text{ for some }x \in U \text{ such that }T.x \subseteq U\text{ is closed}\\
			0 & \text{otherwise} 
		\end{array}\right.
	\end{align*} 
	is a moment measure and $U=U_{(m)}$.
\end{prop}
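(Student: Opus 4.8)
The plan is to detect the points of $U$ cohomologically via \Cref{prop:nontrivclass} and to turn this into a statement about cells using \Cref{prop:CycleClassByCell}. Since $\class_{[X/T]}(\cell(x))$ is a nonzero integer multiple of $\class_{[X/T]}([\overline{T.x}/T])$, the two vanish simultaneously in $H^*([X/T],\bK)$, so by \Cref{prop:nontrivclass} the condition $x\in U$ depends only on $\cell(x)$; call a cell $c$ \emph{$U$-active} if $j_U^*\class_{[X/T]}(c)\neq 0$, equivalently if some (hence every) $x$ with $\cell(x)=c$ lies in $U$. Then $m$ is well defined, and $m\not\equiv 0$: picking $x\in U$, the good quotient $[U/T]\to U\mmod T$ produces a unique closed orbit $T.x_1\subseteq\overline{T.x}\cap U$ with $x_1\in U$, whence $m(\cell(x_1))=1$. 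The same observation gives $U\subseteq U_{(m)}$, since $x_1\in\overline{T.x}$. Conversely, if $x\in U_{(m)}$ there is $x'\in\overline{T.x}$ with $m(\cell(x'))=1$, witnessed by some $y\in U$ with $\cell(y)=\cell(x')$; then $\cell(x')$ is $U$-active, so $x'\in U$. Choosing $\lambda\in X_*(T)$ with $\lim_{t\to 0}\lambda(t).x\in T.x'\subseteq U$ and using that $U$ is open and $T$-invariant, the orbit curve $\overline{\lambda.x}\colon\bP^1\to X$ maps a neighbourhood of $0$ into $U$, so $\lambda(t_0).x\in U$ for some $t_0\in\bG_m$ and hence $x\in U$. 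This proves $U=U_{(m)}$.

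For the subdivision axiom I would first record a rigidity property: if $m(c)=1$ with witness $y$, then $\overline{T.y}\cap U=T.y$. Indeed $\overline{T.y}$ is a projective toric variety (\Cref{rem:projective_orbits}) with dense orbit $T.y$, and $\overline{T.y}\cap U$ is an irreducible closed $T$-invariant subscheme of $U$ containing the dense orbit $T.y$; as $T.y$ is closed in $U$ it is closed in $\overline{T.y}\cap U$, and a dense closed subscheme of an irreducible scheme is the whole scheme. Consequently
\[
	j_U^*\class_{[X/T]}(c)=s_y\cdot\class_{[U/T]}([(\overline{T.y}\cap U)/T])=s_y\cdot\class_{[U/T]}([T.y/T])\neq 0 .
\]

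Now let $\{c_i\}_{i\in I}$ subdivide a cell $c$ with $m(c)=1$ and let $I_{\max}\subseteq I$ be the maximal-dimensional cells. By \Cref{lem:AddCycleClass} and the previous paragraph,
\[
	s_y\cdot\class_{[U/T]}([T.y/T])=\sum_{j\in I_{\max}}j_U^*\class_{[X/T]}(c_j),
\]
where a summand vanishes unless $c_j$ is $U$-active, in which case it equals $s_{x^{(j)}}\cdot\class_{[U/T]}([(\overline{T.x^{(j)}}\cap U)/T])\neq 0$ for any $x^{(j)}$ with $\cell(x^{(j)})=c_j$. It remains to show that exactly one maximal cell $c_{j_0}$ is $U$-active, that $m(c_{j_0})=1$, and that $m(c_i)=0$ for every other $i\in I$; then $m(c)=\sum_{i\in I}m(c_i)$. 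To do this I would localize the displayed identity to the fixed-point stratum of $T_y:=\Stab_T(y)^\circ_{\red}$, exactly as in the proof of \Cref{prop:nontrivclass}: restricting to $H^*([F_U/T],\bK)$, with $F$ the component of $X^{T_y}$ through $y$ and $F_U=F\cap U$ (so $F_U\mmod T\subseteq U\mmod T$ is again a proper good quotient, in which $T.y$ is a maximal-dimensional closed orbit), the left-hand side becomes, up to a non-zero-divisor, the class of $T.y$ in $F_U\mmod T$, a generator of its top cohomology, while a $U$-active maximal cell $c_j$ contributes, through the explicit formulas of \Cref{lem:OrbitClassInV}, the class of the closed orbit of $\overline{T.x^{(j)}}$ in $U$ cupped with a non-zero-divisor. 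Comparing in the one-dimensional top cohomology of $F_U\mmod T$, and using that distinct closed orbits of $U$ have distinct images in $U\mmod T$, forces a unique active maximal cell $c_{j_0}$ whose closed orbit in $U$ has the same dimension as $T.y$, i.e.\ the same dimension as $c=c_{j_0}$ (recall that by \Cref{prop:geometric_cells} the dimension of a cell equals that of the corresponding orbit); since a face of $|c_{j_0}|$ of full dimension is $|c_{j_0}|$ itself, the closed orbit inside $\overline{T.x^{(j_0)}}\cap U$ is all of $T.x^{(j_0)}$, so $T.x^{(j_0)}$ is closed in $U$ and $m(c_{j_0})=1$. The same computation shows that no other cell of the subdivision — maximal but not $U$-active, or of smaller dimension — is the cell of a closed orbit of $U$, so $m(c_i)=0$ there, and $m$ is a moment measure.

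The main obstacle is precisely this last localization step: one must exclude cancellation among the restricted cycle classes of the cells of the subdivision and identify the single surviving class with that of a closed orbit in $U$. This is exactly the subtlety highlighted in the introduction — that the stability condition does not see the (possibly disconnected) geometry of the locus defining a cell, and that a cell decomposition which need not arise from any degeneration still behaves additively on cycle classes. The technical input is the equivariant localization theorem together with the Bia\l ynicki--Birula stratification computations already carried out for \Cref{prop:nontrivclass}, combined with the rigidity $\overline{T.y}\cap U=T.y$ above.
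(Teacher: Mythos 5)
The first half of your argument — well-definedness of $m$ via \Cref{prop:CycleClassByCell} and \Cref{prop:nontrivclass}, the identity $U=U_{(m)}$, and the rigidity $\overline{T.y}\cap U=T.y$ for a witness $y$ of $m(c)=1$ — is correct and agrees with the paper. The gap is in the subdivision axiom, and it is not merely presentational. Your intermediate goal, that ``exactly one maximal cell $c_{j_0}$ of the subdivision is $U$-active,'' is false in precisely the hard case of the proposition: if the unique cell of the subdivision carrying a closed orbit of $U$ is a \emph{lower-dimensional} cell $c_0$ (which can happen since $m$ need not be geometric), then \emph{every} maximal cell containing $c_0$ in its closure is $U$-active, because the orbit closure of a point realizing such a cell contains an orbit with cell $c_0$, hence meets the open set $U$. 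So several nonzero summands appear on the right of your displayed identity, and the whole difficulty is to show that they add up to the left-hand side without cancellation while none of them individually is the class of a closed orbit of $U$.

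Your proposed mechanism for this — restricting to $H^*([F_U/T],\bK)$ and ``comparing in the one-dimensional top cohomology of $F_U\mmod T$'' — does not work as stated. After restriction to $[F_U/T]\cong [F_U/(T/T_y)]\times BT_y$, the classes $j_U^*\class_{[X/T]}(c_j)$ have the form (class of a closed orbit) cupped with a class having a nontrivial $H^{>0}(BT_y,\bK)$-component; they are not pulled back from $F_U\mmod T$, so there is no one-dimensional group in which to compare them. Worse, the closed orbits attached to different active maximal cells can have stabilizers of different dimensions and therefore live on fixed-point strata of larger subtori $T_1^{(j)}\supsetneq T_y$, not on the component $F$ of $X^{T_y}$ through $y$, so the localization you invoke is not even uniform in $j$; and ``distinct closed orbits have distinct images in $U\mmod T$'' yields no linear-algebra statement about their cycle classes. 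This is exactly where the paper does something substantially different: it groups the active maximal cells into stars $\Star(c_j)$ of the cells with $m(c_j)=1$, shows distinct stars are disjoint using uniqueness of closed orbits in fibers of a good quotient, and proves $j_U^*\class_{[X/T]}(\Star(c_j))=j_U^*\class_{[X/T]}(c_{gen})$ by induction on the stabilizer dimension, carried out via the saturated (Kirwan) blow-up of Edidin--Rydh along the fixed locus of $\Stab_T(x)^\circ_{\red}$; this reduces to the finite-stabilizer case, where the top-cohomology comparison you have in mind is actually valid. Without an ingredient of this kind your argument does not close.
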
  
\begin{proof}
	We have already seen that $U_{(m)}=U$: For any $x \in U$, there exists a unique point $x' \in \overline{T.x} \cap U$ such that $T.x' \subseteq U$ is closed,  since $[U/T] \to U\mmod T$ is a good quotient. Then $m(\cell(x'))=1$ and by definition $x \in U_{(m)}$. Conversely, for any $x \in U_{(m)}$ by definition $m(\cell(x'))=1$ for some $x' \in \overline{T.x}$. As the equivariant cycle class of $x^\prime$ is determined by $\cell(x^\prime)$, \Cref{prop:nontrivclass} implies that $x' \in U$ and hence $x \in U$ since $U$ is open and $T.x^\prime$ is in the closure of $T.x$.
	
	It remains to prove that $m$ is a moment measure, i.e. that if $m(c)=1$ for some cell $c$ and $\mathring{|c|}=\cupdot \mathring{|c_i|}$ is a subdivision of $c$ then $m(c_i)=1$ for exactly one $i$.
	
	We will first prove this in the case that $U$ contains points with closed orbits that define maximal dimensional cells. 

\vspace{0.5em}
	
	\noindent{\sc Step 1: Reduction of the claim for cells of maximal dimension to the case of the generic cell.} First assume that $c=\cell(x)$ is a cell of maximal dimension satisfying $m(c)=1$, i.e., $x$ is a point with closed orbit of maximal dimension in $U$.  In this case, we can reduce to the case that $c$ is the generic cell: As any point $x$ can be obtained as specialization of the generic point and specializations define subdivisions (\Cref{prop:geometric_cells}), there is a subdivision of the generic cell $\mathring{|c_{gen}|}=\cupdot \mathring{|\cell(x_i)|}$ such that $x=x_i$ for some $i$. Thus any subdivision of $c$ induces a refinement of this subdivision of $c_{gen}$.

We claim that if $U$ contains a point $x$ with closed orbit of maximal dimension, then the generic orbit is closed in $U$ and thus $m(c_{gen})=1$, which finishes the reduction step. To see this, write $x$ as specialization of the generic point $x_{gen} \in U$, i.e. choose a DVR $R$ with fraction field $K$ and a point $x_R\in U(R)$ such that $x_K$ lies over the generic point of $U$ and the special fiber of $x_R$ maps to $x$. As $p\colon [U/T] \to U\mmod T$ is a good quotient, there is a unique orbit $T.x^\prime_K$ in the closure of the generic orbit, that is closed in $U_K$. As $p$ is universally closed, this orbit will specialize to some orbit $T.x_0$ in the special fiber of $\overline{T.x_R}\cap U_R$. If $x^\prime_K$ is not the generic orbit, then its stabilizer is of positive dimension so the same would hold for its specialization $x_0$. However, as $U\mmod T$ is separated, only one closed orbit in the special fiber of $\overline{T.x_R}\cap U_R$ can lie in $U$ and therefore the specialization $T.x_0$ is necessarily $T.x$. As the stabilizer of $x$ is finite, this implies that $T.x_K^\prime$ is the generic orbit.

\vspace{0.5em}

\noindent{\sc Step 2: Reduction to a computation of classes of stars of a cell.} If $m(c_{gen})=1$, a general orbit in $U$ is closed. Thus as in \Cref{prop:nontrivclass} the open subset $$\overline{j}_{U^{st}}\colon (U\mmod T)^{st} \hookrightarrow U\mmod T$$ over which the fibers of $p\colon U \to U\mmod T$ consist of a single orbit of maximal dimension is non-empty and $j_U^*\class_{[X/T]}(c_{gen})\in H^{2\dim([U/T])}([U/T],\bK)$ is the image of the cycle class of a smooth closed point of $U\mmod T$ under the morphism $$p^*\colon H^{2\dim(U\mmod T)}(U\mmod T,\bK) \to H^{2\dim(U\mmod T)}([U/T],\bK).$$ So we need to prove that for any subdivision $\mathring{|c_{gen}|}=\cupdot_{i\in I} \mathring{|c_i|}$ of the generic cell, there is a unique $j\in I$ such that $m(c_j)=1$, i.e. such that $c_j$ is the cell of a point with closed orbit in $U$. 

To prove this, we mimic the argument for geometric moment measures, where we used that for geometric quotients cells corresponding to closed orbits of maximal dimension in $U$ all give rise to the same cycle class. For cells $c_j$ that are not of maximal dimension we will show that the same result holds for the sum of the maximal dimensional cells appearing in the star neighborhood of $c_j$.

Suppose $m(c_j)=1$ for some $j$ then no cell in the boundary of $c_j$ corresponds to points in $U$, because $c_j$ corresponds to a closed orbit in $U$ by definition and the boundary of a cell corresponds to cells of points in the orbit closure. 

Let us denote by $\Star(c_j)$ the union of the cells appearing in the subdivision $\mathring{|c_{gen}|}=\cupdot_{i\in I} \mathring{|c_i|}$, that contain $c_j$ in their closure. Then by \Cref{prop:nontrivclass} all cells $c_{i}\in \Star(c_j)$ also satisfy $j_U^*\class_{[X/T]}(c_i)\neq 0 \in H^*([U/T],\bK)$, because $U$ is open and thus a point lies in $U$ if and only if some orbit in its orbit closure lies in $U$. 

Moreover, as $p\colon [U/T] \to U\mmod T$ is a good quotient, all orbit closures in $U$ contain a unique closed orbit. This implies that if $m(c_{j^\prime})=1$ for some other cell, then $\Star(c_j)\cap \Star(c_{j^\prime})=\varnothing$, as any point corresponding to a cell in this intersection would be in $U$ and contain at least 2 closed orbits in its closure. 

As $\class_{[X/T]}(c_{gen})$ is the sum of the classes of the maximal dimensional cells in the decomposition $\mathring{|c_{gen}|}=\cupdot_{i\in I} \mathring{|c_i|}$ by \Cref{lem:AddCycleClass}, it thus suffices to show that $$j_U^*\class_{[X/T]}(\Star(c_{j}))=j_U^*\class_{[X/T]}(c_{gen})\in H^*([U/T],\bK)$$ whenever $m(c_j)=1$, because we just saw that any two stars of classes satisfying $m(c_j)=1$ are disjoint and cells that do not contain a cell $c^\prime$ with $m(c^\prime)=1$ map to $0$ in $H^*([U/T],\bK)$.

We have already seen this if $c_j$ is a cell of maximal dimension satisfying $m(c_j)=1$, because then $\Star(c_j)=c_j$ and $c_j$ corresponds to a closed orbit that appears as specialization of the generic orbit. Thus in the cell decomposition of $c_{gen}$ defined by a specialization to $c_j$ the only maximal dimensional cell corresponding to closed orbits in $U$ is $c_j$.

\vspace{0.5em}

\noindent{\sc Step 3: Identifying the class of a star of cells.} To compare the class of a star $j_U^*\class_{[X/T]}(\Star(c_{j}))$ for orbits that are not of maximal dimension with the class of maximal dimensional orbits, we use a Kirwan blow-up to increase the dimension of the closed orbit. Suppose that $c_j=\cell(x)$ is a cell satisfying $m(c_j)=1$, which is not of maximal dimension. Then the subtorus $$T_j := \big(\left\{ t\in T \mid \chi_i(t)=\chi_{i^\prime}(t) \text{ for all }i,i^\prime \in c_j\right\}\big)^\circ_{\text{red}} \subseteq T$$
is the reduced connected identity component of $\Stab_{T}(x)$, because the dimensions of the tori agree and cocharacters in $T_j$ act trivially on $\overline{T.x}$ by our positivity assumption on the line bundle $\cL$ and the basic example \Cref{ex:weight-degree}.

Now let $X^{T_j}=\coprod F^\prime_\ell$ be the connected components of the fixed point components of $T_j$ and let $F_x^\prime$ denote the component containing $x$. As $F_x^\prime$ contains exactly those $T$-fixed point components $F_j$ for $j\in \cell(x)$, the {component} only depends on the cell $c_j$, not on the choice of $x$.

Let $b\colon \widetilde{X}:= \Bl_{F_x^\prime}(X)\to X$ be the blow up of $X$ in $F_x^\prime$. Then the preimage $b^{-1}(F_j)$ of a $T$-fixed point stratum $F_j\subseteq F_x$ is the projectivization of the normal bundle to $F_x^\prime$. Let $\chi_\ell^\prime \in X^*(T)$ for $\ell=1,\dots, f$ be the characters appearing in the fibers of the normal bundle, so that the {$T$-}fixed point components
$$ 	 b^{-1}(F_j)^T = \coprod_{\ell=1}^f F_{j,\ell}$$
of $\widetilde{X}$ above $F_j$ are the projectivizations of the subbundles of the normal bundle corresponding to the characters $\chi'_\ell$. Because in projective spaces all subsets of characters that define the vertices of a convex polytope define cells, the same holds for the cells of orbits contained in $b^{-1}(F_j)$. More precisely, the map $b$ induces a map on the cell complexes $\cC(\widetilde{X})\to \cC(X)$ and the preimage of a cell can be computed as in the description of blow ups of toric varieties.

We choose for all maximal dimensional cells $c_i$ of our subdivision of $c_{gen}$ a point $x_i\in X$ satisfying $c_i=\cell(x_i)$. Then $x_i\not\in F_{x}^\prime$ as the stabilizer of $x_i$ is finite, so $x_i$ has a unique preimage in $\widetilde{X}$ and we denote by $\tilde{c}_i\in \cC(\widetilde{X})$ the cell of this preimage. Then the union $\cup_{j\in J} \tilde{c}_i$ can be completed to a subdivision $(\cup_{j\in J} \tilde{c}_i)\cup(\cup_{\ell \in L} \tilde{d}_\ell)$ of the generic cell $\widetilde{c}_{gen}$ in $\widetilde{X}$ by adding in cells $\tilde{d}_\ell$ of $\cC(b^{-1}(F_j))$ from all $j$ such that $F_j \subseteq F_x$, because we saw that all convex subsets of fixed point components in $b^{-1}(F_j)$ appear as cells.

Let $b^{-1}(\Star(c_i))$ be the subset of cells in this decomposition of $\widetilde{c}_{gen}$ that map to $\Star(c_i)$. We claim that $\class_{[X/T]}(b^{-1}(\Star(c_i)))= b^*(\class_{[X/T]}(\Star(c_i)))\in H^*([\widetilde{X}/T],\bK)$. This holds, because the class $\class(\Star(c_i))$ only restricts to a non-trivial class in $H^*([F_j/T],\bK)$ for the vertices in the star and these are not contained in $F_x^\prime$. Similarly, $\class_{[X/T]}(b^{-1}(\Star(c_i)))$ only restricts to a non-trivial class in fixed point components outside of the exceptional divisor and there the classes agree by construction. 

Now by \cite[Proposition 3.4]{EdidinRydh} there is a canonical open substack $\Bl_{F_{x}^\prime\cap U}^p([U/T]) \subseteq [\widetilde{X}/T]$ called the saturated blow{-}up, that has a good moduli space $\tilde{p}\colon \Bl_{F_{x}^\prime\cap U}^p([U/T]) \to \Bl_{p(F_{x}^\prime\cap U)}(U\mmod T)$ which is a blow{-}up centered in $p(F_{x}^\prime)$ and in this saturated blow{-}up the dimension of the stabilizer dropped over $X^{T_j}$ (\cite[Theorem 2.11 (3)]{EdidinRydh}). As $b^*$ maps the cycle class of a general orbit in $X$ to the cycle class of a general orbit in $\widetilde{X}$ by induction this reduces the claim to the case that the $c_i$ is a cell of maximal dimension. 

This concludes the argument for subdivisions of cells $c$ that have maximal dimension and satisfy $m(c)=1$. 

\vspace{0.5em}

\noindent{\sc Step 4: Subdivisions of cells that are not of maximal dimension.}
In general, if we start with a subdivision $\mathring{|c|}=\cupdot \mathring{|c_i|}$ of a cell $c$ that is not of maximal dimension, let $T_c\subseteq T$ be the torus defined by $c$ and $F_c \subset X^{T_c}$ the component of the fixed point set of $T_c$ that contains the points with cell $c$, i.e. the component containing the $T$-fixed point components  $F_i$ for $i\in c$.

Then the cycle class $\class_{[X/T]}(c)\in H^*([X/T],\bK)$ is the image of the cycle class $\class_{[F_c/T]}(c)\in H^*([F_c/T],\bK)$ under the Gysin map. By definition, the cell $c$ corresponds to an orbit of maximal dimension in $F_c$ and any cell decomposition of $c$ appears as a cell decomposition in $\cC(F_c)$. As before $U\cap F_c \subseteq U$ is closed and $T$-invariant and thus admits a good moduli space $(U\cap F_c) \mmod T$. Thus we can apply the result for cells of maximal dimension to $U\cap F_c$ and obtain that $m(c_j)=1$ for a single cell of the subdivision of $c$.
\end{proof}
\appendix
\section{Comparison of cohomology of quotient stacks and good moduli spaces}
In this {appendix} we provide a proof of the result that the intersection cohomology of a good quotient is a direct summand of the cohomology of the stack for the case of torus quotient over fields of any characteristic. The argument has appeared recently in an article of Kinjo \cite{Kinjo-decomposition_good_moduli} for general good quotients  and independently by Hennecart \cite{Hennecart-CohomologicalIntegrality} for global quotients, both working over the complex numbers. Our argument, included upon request, is a variation of the same argument,  we only have to replace the use of mixed Hodge modules by $\ell$-adic sheaves to apply the decomposition theorem of \cite{BBDG}, Hennecart already indicates this in \cite{Hennecart-CohomologicalIntegrality}. To keep the argument short and  self-contained, we only consider the case of a global quotient by a torus action, where we can use explicit proper approximations. Throughout we will denote the constant sheaf on a scheme or  stack $\cX$ by $\un{\bK}_{\cX}$.
\begin{prop}\label{Prop:semisimplepure}
Let $U$ be a smooth variety equipped with a torus action $T\times U\to U$ that admits a good moduli space $p\colon [U/T]\tto U\mmod T$ such that for a $T$-invariant dense open subset $U^{st}\subseteq U$ the map $[U^{st}/T] \to U^{st}\mmod T$ is a coarse moduli space.

Then the direct image $\bR p_*\ubK_{[U/T]}[\dim [U/T]]$ is a semisimple pure complex that contains the intersection complex $\IC_{U\mmod T}$ of $U\mmod T$ as a direct summand.
\end{prop}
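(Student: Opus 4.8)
The strategy is the one indicated in the introduction and in Section~\ref{sec:equivariantclasses}: realize $[U/T]$ by an increasing sequence of smooth proper approximations and apply the decomposition theorem of \cite{BBDG} to the proper map induced by $p$ on each approximation. Concretely, fixing $T\cong \bG_m^r$ and setting $\cX_n := [U\times (\bA^{n+1}\smallsetminus\{0\})^r/T]$, each $\cX_n$ is a smooth scheme and the projection $\cX_n\to [U/T]$ is an affine-bundle-type map inducing isomorphisms on cohomology (sheaves) up to a degree tending to $\infty$ with $n$. The good moduli space $p$ induces a proper morphism $p_n\colon \cX_n \to Y_n$, where $Y_n:= (U\times(\bA^{n+1}\smallsetminus\{0\})^r)\mmod T$ is a quasi-projective scheme: properness of $p_n$ follows from properness of $p$ (which holds by hypothesis after base change), and the target is a scheme since good quotients of quasi-affine schemes by torus actions are quasi-projective, or more simply since $Y_n$ is a $(\bP^n)^r$-bundle-like space over $U\mmod T$ glued from affine GIT quotients.

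\emph{Key steps, in order.} First I would make the approximation precise: check that $\bR p_*\ubK_{[U/T]}[\dim[U/T]]$ in each fixed degree agrees with $\bR p_{n,*}\ubK_{\cX_n}[\dim \cX_n]$ (up to the shift by the rank contributions of the added factors) for $n\gg 0$, using that removing a closed substack of high codimension does not change cohomology in bounded degree and that cohomology is insensitive to vector-bundle pullback, exactly as recalled in Section~\ref{sec:equivariantclasses}. Second, since $\cX_n$ is smooth and $p_n$ is proper, the decomposition theorem gives that $\bR p_{n,*}\ubK_{\cX_n}[\dim \cX_n]$ is a pure semisimple perverse complex, i.e. a direct sum of shifted intersection complexes $\IC_{\overline{W}}(\cL)[d]$ for local systems $\cL$ on strata $W\subseteq Y_n$. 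Third, I would pin down that $\IC_{Y_n}$ (with trivial coefficients) occurs as a summand with no shift: this is where the hypothesis on $U^{st}$ enters — over the dense open $(U\mmod T)^{st}$ the map $p$ is a coarse space, so $\bR p^{st}_* \ubK = \ubK$ (the stabilizers are finite and cohomology with $\bQ$/$\bQ_\ell$ coefficients is insensitive to finite gerbes), hence the generic stratum contributes exactly $\IC_{Y_n}$; the pullback of this along $Y_n \to U\mmod T$ is $\IC_{U\mmod T}$ tensored with the cohomology of $(\bP^n)^r$, whose degree-zero part recovers $\IC_{U\mmod T}$ as a summand. Finally I would assemble these into the statement for $[U/T] \to U\mmod T$ by taking $n\to\infty$ and observing the summand $\IC_{U\mmod T}$ survives stably, while purity and semisimplicity are degreewise statements already verified at finite level.

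\emph{Main obstacle.} The delicate point is the last comparison step: one must be careful that the decomposition theorem is applied at each finite level and that the ``$n\to\infty$'' limit is taken only after extracting bounded-degree information, since $[U/T]$ itself is not proper and the decomposition theorem does not apply to it directly. Concretely, one should phrase the conclusion as: for every $k$ there is $n$ such that $H^k$ of $\bR p_*\ubK_{[U/T]}[\dim[U/T]]$ and its would-be decomposition agree with the genuine (finite-level) decomposition of $\bR p_{n,*}\ubK_{\cX_n}$, and check that the $\IC_{U\mmod T}$-summand appearing at level $n$ is compatible with the one at level $n+1$ under the transition maps. The other mild technical point is identifying $Y_n$ as a scheme to which \cite{BBDG} applies in the $\ell$-adic setting (finite type over the base field), together with checking purity makes sense — here one works over a finite field and spreads out, or invokes the standard formalism; this is routine but must be stated. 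Everything else (affine-bundle invariance, $\Theta$-reductivity and $S$-completeness of $[U/T]$ passing to $\cX_n$, properness of $p_n$) is formal from results already in the excerpt.
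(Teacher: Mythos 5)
Your overall strategy --- approximate $[U/T]$ by smooth spaces, push forward along a proper map to something projective over $U\mmod T$, invoke the decomposition theorem of \cite{BBDG}, and identify the $\IC_{U\mmod T}$ summand via the coarse-space locus $U^{st}$ --- is the right one and is the one the paper follows. But there is a genuine gap at the central step: your approximation $\cX_n=[U\times(\bA^{n+1}\smallsetminus\{0\})^r/T]$ does \emph{not} admit a proper morphism to $U\mmod T$, nor to anything projective over it, so the decomposition theorem cannot be applied at finite level as you propose. Since the $T$-action on $U\times(\bA^{n+1}\smallsetminus\{0\})^r$ is free, $\cX_n$ is simply the geometric quotient, and it fails to be proper over $U\mmod T$: already for $U=\bA^2$ with $\bG_m$ acting with weights $(1,-1)$ one finds $\cX_n\cong\mathrm{Tot}(\cO_{\bP^n}(1)\oplus\cO_{\bP^n}(-1))\to\bA^1=U\mmod T$, whose fiber over $0$ contains the non-proper total spaces of $\cO(\pm1)$. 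In particular $Y_n$ is not a ``$(\bP^n)^r$-bundle-like space over $U\mmod T$''. Your justification that ``properness of $p_n$ follows from properness of $p$'' also fails at the outset: a good moduli space morphism is universally closed but not proper (it is not even separated when there are non-closed orbits, as for $[\bA^1/\bG_m]\to\Spec k$).

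The missing ingredient is precisely what \Cref{lem:ProperCover} supplies: one must compactify the auxiliary directions, replacing $(\bA^{n+1}\smallsetminus\{0\})^r$ by $\prod_{i=1}^r\bP(k^N\oplus k^N)$ with $\bG_m$-weights $+1$ and $-1$, and then delete the GIT-unstable locus of $U\times\bP_N$ for a linearization $\cL^d_\chi$. The presence of both signs is essential: it makes the invariant homogeneous coordinate ring generated by products of coordinates of opposite weights, so that the unstable locus has codimension $\geq N$ (giving the cohomological approximation in bounded degrees), while the quotient of the stable locus is locally projective over $U\mmod T$ (giving a genuinely proper map to which \cite{BBDG} applies); a small character twist $\chi$ forces semistable $=$ stable. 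A construction with only one sign of weights, as in your $(\bA^{n+1}\smallsetminus\{0\})^r$, can never achieve both properties simultaneously. Once this lemma is in place, your remaining steps --- self-duality of the pushforward from the geometric quotient, identification of the $\IC_{U\mmod T}$ summand using $p_*\ubK_{[U/T]}=\ubK_{U\mmod T}$, and passing to the limit in bounded cohomological degree --- do go through essentially as you describe.
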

As in \cite{Kinjo-decomposition_good_moduli} and \cite{Hennecart-CohomologicalIntegrality}, the key ingredient of the argument is a geometric approximation argument similar to the one used in \Cref{sec:equivariantclasses} for Chow groups. More precisely for any integer $c>0$ we construct an action of $T$ on a projective scheme $\bP_N$ such that for the diagonal action on $U\times \bP_N$ there exists a closed subset $Z\subseteq U\times \bP_N$ of codimension at least $c$ such that the complement $U\times \bP_N\smallsetminus Z$ admits a geometric quotient $(U\times \bP_N\smallsetminus Z)\mmod T$ that is locally projective over $U\mmod T$. This allows us to compare the direct image $\bR p_* \ubK_{[U/T]}[\dim [U/T]]$ with the direct image of a proper morphism to which the decomposition theorem applies.

For any $N\in \bN$ let us denote by $V_{N,\pm}=k^N\oplus k^N$ the $2N$-dimensional representation of $\bG_m$ that is of weight $1$ on the first $N$ coordinates and of weight $-1$ on the last $N$ coordinates. Then the line bundle $\cO(1)$ on $\bP(V_{N,\pm})$ carries a standard linarization of $\bG_m$ for which the weight at the fixed points $\bP(k^N \oplus 0)\subseteq \bP(V_{N,\pm})$ is $1$ and the weight at the fixed points $\bP(0\oplus k^N)\subseteq \bP(V_{N,\pm})$ is $-1$. {As the invariant elements of the homogeneous coordinate ring of $\bP(V_{N,\pm})$ are generated by the products of pairs of coordinates with opposite weights,} the unstable points for $\bG_m$-action with respect to the standard linearization of $\cO(1)$ are exactly the fixed points and the same applies for the standard linearization of $\cO(d)$ for $d>0$.

Similarly, choosing a splitting of our torus $T\cong \bG_m^r$ we denote by $$\bP_{N}:=\prod_{i=1}^r \bP(V_{N,\pm})$$ the product of the projective spaces $\bP(V_{N,\pm})$ equipped with the $T$-action such that $T$ acts on the $i$-th $\bP(V_{N,\pm})$-factor of $\bP_{N}$ through the projection to the $i$-th $\bG_m$-factor of $T$. Then the line bundle $\cL:=\cO(1,\dots,1)$ and its powers $\cL^d$ on $\bP_N$ again come equipped with a standard linearization of $T$.

Any character $\chi\in X^*(T)$ defines a linearization $\cO_\chi$ of the trivial line bundle and we write $\cL^d_\chi$ for the corresponding change of linearization on $\cL^d$. The following is a variant of \cite[Lemma 3.1]{Kinjo-decomposition_good_moduli} and \cite[Section 5]{Hennecart-CohomologicalIntegrality}.
\begin{lem}\label{lem:ProperCover}
	Let $U$ be a normal variety equipped with a torus action $T\times U\to U$ that admits a good moduli space $p\colon [U/T]\tto U\mmod T$ such that for a $T$-invariant dense open subset $U^{st}\subseteq U$ the map $[U^{st}/T] \to U^{st}\mmod T$ is a coarse moduli space. 
	
    For any integer $c>0$ there exist $N>0$ and a linearization $\cL^d_{U,\chi}$ of some power of $\cL_{U}:=p_{\bP_N}^*\cO(1,\dots,1)$ on $U\times \bP_{N}$ such that
    \begin{enumerate}
    	\item the $\cL_{U,\chi}^d$-unstable locus $Z$ of $U\times \bP_{N}$ has codimension $\geq c$,
    	\item all $\cL_{U,\chi}^d$-semistable points of $U\times \bP_{N}$ are stable, so that $U\times \bP_{N}\smallsetminus Z$ admits a geometric quotient $(U\times \bP_{N}\smallsetminus Z)\mmod T$ and
    	\item the geometric quotient $(U\times \bP_{N}\smallsetminus Z)\mmod T$ is locally projective over $U\mmod T$.
   \end{enumerate}  
\end{lem}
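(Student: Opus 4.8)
The plan is to realise $U\times\bP_N\smallsetminus Z$ and its quotient as a relative GIT quotient of $U\times\bP_N$ over $U\mmod T$, for a suitable power of $\cL_U$ twisted by a character. The geometric heart of the matter is that $\bP(V_{N,\pm})$ has exactly the two $\bG_m$-fixed subspaces $\bP(k^N\oplus 0)$ and $\bP(0\oplus k^N)$, so that $\bP_N\smallsetminus\bP_N^{ss}=\bigcup_i(\{a_i=0\}\cup\{b_i=0\})$ is a union of linear subspaces of codimension $N$. I will also use the numerical input that for $z=(z_i)_i$ with $z_i=[a_i:b_i]\in\bP(V_{N,\pm})^{ss}$ one has $\mu^{\cO(1)}(z_i,\lambda_i)=|\lambda_i|$, while for $z_i$ the $\bG_{m,i}$-fixed point in the weight-$\epsilon_i$ subspace the weight of $\lambda$ on $\cO(1,\dots,1)|_z$ is $\sum_i\epsilon_i\lambda_i$.

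First I would fix the linearization. Since $U$ is of finite type, only finitely many subtori $L\subseteq T$ occur as an intersection $\Stab_T(u)^\circ_{\red}\cap\prod_{i\in I}\bG_{m,i}$ with $u\in U$ and $I\subseteq\{1,\dots,r\}$; I choose $\chi_0\in X^*(T)_\bQ$ with $\lVert\chi_0\rVert_\infty<1$ such that for every such $L\neq 0$ and every character $\eta=\sum_{i\in I}\pm\bar e_i$ the character $\chi_0+\eta$ does not annihilate the cocharacter lattice of $L$ (possible since $X^*(T)_\bQ$ is not a finite union of proper affine subspaces), and then set $d$ to be a denominator of $\chi_0$, $\chi:=d\chi_0$, $N\geq c$, with $\cL^d_{U,\chi}$ the resulting linearization. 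Property (3) then follows by reduction to the affine case: good moduli space morphisms are affine, so over an étale (or Zariski) chart $\Spec A$ of $U\mmod T$ the corresponding open $U'=\Spec B$ of $U$ is affine with $B^T=A$, and classical GIT for the quasi-projective $U'\times\bP_N$ with the relatively ample linearization $\cL^d_{U',\chi}$ produces a quotient equal to $\Proj_A$ of the algebra of invariant sections, projective over $\Spec A$; these glue because the semistable locus is intrinsic. For (1): if $z\in\bP_N^{ss}$ then for every $\lambda$ for which $\lim_{t\to0}\lambda(t).(u,z)$ exists, $\mu^{\cL^d_{U,\chi}}((u,z),\lambda)=d(\sum_i|\lambda_i|-\langle\chi_0,\lambda\rangle)\geq d(1-\lVert\chi_0\rVert_\infty)\sum_i|\lambda_i|\geq 0$, so $(u,z)$ is semistable; hence the unstable locus $Z$ lies in $U\times(\bP_N\smallsetminus\bP_N^{ss})$, of codimension $N\geq c$.

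For (2) I would show every semistable point is stable. Let $(u,z)$ be semistable, $I=\{i:z_i\in\bP(V_{N,\pm})^{us}\}$ and $\eta=\sum_{i\in I}\epsilon_i\bar e_i$. A non-zero $\lambda\in\Stab_T(u)^\circ_{\red}\cap\prod_{i\in I}\bG_{m,i}$ would fix $(u,z)$, and by the choice of $\chi_0$ some $\lambda'$ in this subtorus has $\langle\chi_0+\eta,\lambda'\rangle>0$, whence $\mu^{\cL^d_{U,\chi}}((u,z),\lambda')=-d\langle\chi_0+\eta,\lambda'\rangle<0$, contradicting semistability; so $\Stab_T(u,z)=\Stab_T(u)\cap\Stab_T(z)$ is finite. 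Likewise, if $\lim_{t\to0}\lambda(t).(u,z)=(u_1,z_1)$ were semistable for some $\lambda\neq0$, then $\lambda$ fixes $u_1$ and lies in $\prod_{i\in I_1}\bG_{m,i}$ with $I_1=\{i:(z_1)_i\in\bP(V_{N,\pm})^{us}\}$, so $0\neq\lambda\in\Stab_T(u_1)^\circ_{\red}\cap\prod_{i\in I_1}\bG_{m,i}\subseteq\Stab_T(u_1,z_1)$, contradicting the finiteness just established; thus semistable orbits are closed in the semistable locus with finite stabilizers, i.e.\ stable, and $U\times\bP_N\smallsetminus Z$ admits a geometric quotient.

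I expect the main obstacle to be the choice of the twisting character $\chi$: it must be generic enough that at a semistable pair $(u,z)$ the stabilizer of $u$ never overlaps, in positive dimension, the torus fixing the degenerate $\bP(V_{N,\pm})$-coordinates — which is exactly what forces semistable $=$ stable — while at the same time small enough that the unstable locus cannot escape $U\times(\bP_N\smallsetminus\bP_N^{ss})$, so that its codimension is governed by $N$. The remaining ingredients (finiteness of the relevant family of subtori, the Hilbert–Mumford computations above, and the gluing of the local GIT quotients) are routine, and the argument is a relative version of \cite[Lemma 3.1]{Kinjo-decomposition_good_moduli} and \cite[Section 5]{Hennecart-CohomologicalIntegrality}.
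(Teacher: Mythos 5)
Your proposal is correct and follows essentially the same route as the paper's proof: bound the unstable locus by the (codimension-$N$) unstable locus of $\bP_N$ via the Hilbert--Mumford weight of the pulled-back bundle, use finiteness of the stabilizer subgroups occurring in $U$ to choose a small twisting character $\chi$ forcing semistable $=$ stable, and reduce local projectivity to classical relative GIT over affine charts of $U\mmod T$. Your write-up is somewhat more explicit than the paper's (the precise genericity condition on $\chi_0$ and the norm estimate replacing the paper's appeal to stability being unchanged under small perturbations of the linearization), but the argument is the same.
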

\begin{proof}
We already noted above that 
the $\cO(d)$-unstable points for the $\bG_m$-action on $\bP(V_{N,\pm})$ are exactly the fixed points which have codimension $N$ and all non-fixed points are stable.

Therefore the $\cO(d,\dots,d)$-unstable points on $\bP_N=\prod_{i=1}^r \bP(V_{N,\pm})$ are also exactly the points for which at least one coordinate is a fixed point. These are again of codimension $N$.

As by the Hilbert-Mumford criterion stability is unchanged under small changes of the linearization, the same holds for $\cO(d,\dots,d)_\chi$ whenever $\chi$ is a character of $T$ that is sufficiently small with respect to $d$.

A point in $U\times \bP_{N}$ is $\cL_{U,\chi}^d$-unstable if and only if the Hilbert-Mumford weight is $\geq 0$ for some cocharacter $\lambda\colon \bG_m\to T=\bG_m^r$. As the line bundle $\cL_{U}^d$ is obtained as pull-back from $\bP_{N}$, the weight is given by the weight of the image of the point under the projection to $\bP_{N}$. In particular all unstable points have to map to unstable points in $\bP_N$, so the codimension of the unstable points is at least $N$ and the projection from the semistable locus $(U\times \bP_{N})^{sst} \to U$ is surjective. 

Next we claim that for a suitable choice of $d,\chi$ semistability and stability agree on $U\times \bP_N$. If not, there exists a point $(u,p)\in U\times \bP_{N}$ that is fixed by some cocharacter $\lambda\colon \bG_m \to \Stab_{T}(u,p)$, in particular $\Stab_{T}(u)$ is non-trivial, but contained in the kernel of the morphism $T \to \bG_m$ defined by the linearization of $\cL^d_\chi$ at $p$. As $U$ is of finite type, only finitely many subgroups of $T$ appear as stabilizers of points of $U\times \bP_N$. Therefore there exists a character $\chi\in X^*(T)$ such that $\Stab_{T}(u,p)$ is not contained in the kernel.  	

The last claim on local projectivity of the quotient is local on $U\mmod T$. Since $U$ is normal, it has a $T$-invariant affine open cover by Sumihiro's theorem and thus the quotient $U\mmod T$ is a scheme. So we can assume that $U\mmod T$ is affine and as $[U/T]\to U\mmod T$ is cohomologically affine this implies that $U$ is affine. In this case the Hilbert-Mumford criterion determines GIT-stability on $U\times \bP_{N}$ and therefore $(U\times \bP_{N})^{\sst}\mmod T \to U\mmod T$ is projective.
\end{proof}

\begin{proof}[Proof of \Cref{Prop:semisimplepure}]
We will show that for any integer $c>0$ the truncated complex $\tau^{\leq c}\bR p_*\ubK_{[U/T]}[\dim [U/T]]$ coincides with the truncation $\tau^{\leq c} K_c$ of a semisimple pure complex $K_c$ on $U\mmod T$ that contains $\IC_{U\mmod T}$ as a direct summand. As the cohomological dimension of the finite type scheme $U\mmod T$ is finite, this implies that $\bR p_*\ubK_{[U/T]}[\dim [U/T]]$ is itself pure and semisimple.

To construct such a complex $K_c$ for any $c>0$, we apply \Cref{lem:ProperCover} to obtain a cover $p_U\colon [U \times \bP_N/T] \to [U/T]$ that is representable, smooth and projective, so that $K:=\bR p_{U,*} \ubK_{[U\times \bP_N/T]}[\dim [U/T]]$ is a semisimple complex that contains the constant sheaf $\ubK_{[U/T]}[\dim [U/T]]$ as a direct summand. 

It therefore suffices to prove the claim for this complex $K$. Now by \Cref{lem:ProperCover} we also know that the unstable part $Z\subset U \times \bP_N$ has codimension $\geq c$, so that for the inclusion $j\colon U \times \bP_N\smallsetminus Z \to U \times \bP_N$ the cone of
$$\ubK_{[U \times \bP_N/T]} \to \bR j_* \ubK_{[(U \times \bP_N\smallsetminus Z)/T]}$$ is concentrated in cohomological degree $\geq 2c-1$. Thus the induced morphism 
$$\bR p_* K \to \bR p_* \bR p_{U,*} \bR j_*\ubK_{[(U \times \bP_N\smallsetminus Z)/T]}[\dim [U/T]] =:K_c$$  is an isomorphism in cohomological degrees $\leq 2c-2-\dim[U/T]$. 

Now the composition 
$p\circ p_U\circ j$ factors as
$$[(U \times \bP_N\smallsetminus Z)/T] \map{q} (U \times \bP_N\smallsetminus Z)\mmod T \map{\overline{p}} U\mmod T.$$  
The first map is a geometric quotient and therefore $$\bR q_* \ubK_{[(U \times \bP_N\smallsetminus Z)/T]}[\dim(U\times\bP_N/T)] =\ubK_{(U \times \bP_N\smallsetminus Z)\mmod T}[\dim[U\times \bP_N/T]]$$ and this complex is self-dual. The second morphism is locally projective by construction, so the complex $$K_c[\dim \bP_N]=\bR \overline{p}_* \ubK_{(U \times \bP_N\smallsetminus Z)\mmod T]}[\dim [U\times \bP_N/T]]$$ is again a pure self-dual complex. By the decomposition theorem \cite{BBDG} this complex is semisimple. As $p_*\ubK_{[U/T]}=\ubK_{U\mmod T}$ the complex $K_c$ does contain $\IC_{U\mmod T}$ as a direct summand.
\end{proof}

\begin{thebibliography}{10}
	
	\bibitem{alper-good}
	Jarod Alper.
	\newblock Good moduli spaces for {A}rtin stacks.
	\newblock {\em Ann. Inst. Fourier (Grenoble)}, 63(6):2349--2402, 2013.
	
	\bibitem{ahlh}
	Jarod Alper, Daniel Halpern-Leistner, and Jochen Heinloth.
	\newblock Existence of moduli spaces for algebraic stacks.
	\newblock {\em Invent. Math.}, 234(3):949--1038, 2023.
	
	\bibitem{BBDG} 
	Alexander Beilinson, Joseph Bernstein, Pierre Deligne, Ofer Gabber.
	\newblock {\em Faisceaux Pervers}, 
	\newblock Ast\'{e}risque {\bf 100}, second printing (2018).
	
	
	\bibitem{BBS-QuotientsRk1}
	Andrzej Bia{\l}ynicki-Birula and Andrew~John Sommese.
	\newblock Quotients by {${\bf C}^{\ast} $} and {${\rm SL}(2,{\bf C})$} actions.
	\newblock {\em Trans. Amer. Math. Soc.}, 279(2):773--800, 1983.
	
	\bibitem{BBS-rank2}
	Andrzej Bia{\l}ynicki-Birula and Andrew~John Sommese.
	\newblock Quotients by {${\bf C}\sp\ast \times{\bf C}\sp\ast$} actions.
	\newblock {\em Trans. Amer. Math. Soc.}, 289(2):519--543, 1985.
	
	\bibitem{BBS-conjecture}
	Andrzej Bia{\l}ynicki-Birula and Andrew~John Sommese.
	\newblock A conjecture about compact quotients by tori.
	\newblock In {\em Complex analytic singularities}, volume~8 of {\em Adv. Stud.
		Pure Math.}, pages 59--68. North-Holland, Amsterdam, 1987.
	
	\bibitem{BBSwiecickaReciepe}
	Andrzej Bia{\l}ynicki-Birula and Joanna Świ\polhk{e}cicka.
	\newblock A recipe for finding open subsets of vector spaces with a good
	quotient.
	\newblock {\em Colloq. Math.}, 77(1):97--114, 1998.
	
	\bibitem{BBactions}
	Andrzej Białynicki-Birula.
	\newblock Some theorems on actions of algebraic groups.
	\newblock {\em Ann. of Math. (2)}, 98:480--497, 1973.
	
	\bibitem{BBMomentMeasureProjective}
	Andrzej Białynicki-Birula.
	\newblock On the moment measure conjecture.
	\newblock {\em Bull. Polish Acad. Sci. Math.}, 50(1):33--40, 2002.
	
	\bibitem{BBQuotientsSurvey}
	Andrzej Białynicki-Birula.
	\newblock Quotients by actions of groups.
	\newblock In {\em Algebraic quotients. {T}orus actions and cohomology. {T}he
		adjoint representation and the adjoint action}, volume 131 of {\em
		Encyclopaedia Math. Sci.}, pages 1--82. Springer, Berlin, 2002.
	
	\bibitem{BrionVergne}
	Michel Brion and Michèle Vergne.
	\newblock On the localization theorem in equivariant cohomology.
	\newblock {\em arxiv preprint}, 1997.
	
	\bibitem{EdidinGraham}
	Dan Edidin and William Graham.
	\newblock Equivariant intersection theory.
	\newblock {\em Invent. Math.}, 131(3):595--634, 1998.
	
	\bibitem{EdidinRydh}
	Dan Edidin and David Rydh.
	\newblock Canonical reduction of stabilizers for {A}rtin stacks with good
	moduli spaces.
	\newblock {\em Duke Math. J.}, 170(5):827--880, 2021.
	
	\bibitem{FultonToric}
	William Fulton.
	\newblock {\em Introduction to toric varieties}, volume 131 of {\em Annals of
		Mathematics Studies}.
	\newblock Princeton University Press, Princeton, NJ, 1993.
	\newblock The William H. Roever Lectures in Geometry.
	
	\bibitem{GKM-equivariant}
	Mark Goresky, Robert Kottwitz, and Robert MacPherson.
	\newblock Equivariant cohomology, {K}oszul duality, and the localization
	theorem.
	\newblock {\em Invent. Math.}, 131(1):25--83, 1998.
	
	\bibitem{HausenCompleteAffine}
	J\"{u}rgen Hausen.
	\newblock Complete orbit spaces of affine torus actions.
	\newblock {\em Internat. J. Math.}, 20(1):123--137, 2009.
	
	\bibitem{GHH-HilbertScheme}
	Martin G.~Gulbrandsen,  Lars H.~Halle and Klaus Hulek.
	\newblock A {GIT} construction of degenerations of {Hilbert} schemes of points.
	\newblock {\em Doc. Math.}, 24, 421--472, 2019.
	
	\bibitem{heinloth-stability}
	Jochen Heinloth.
	\newblock Hilbert-{M}umford stability on algebraic stacks and applications to
	{$\mathcal{G}$}-bundles on curves.
	\newblock {\em \'Epijournal Geom. Alg\'ebrique}, 1:Art. 11, 37, 2017.
	
	\bibitem{Hennecart-CohomologicalIntegrality}
	Lucien Hennecart.
	\newblock Cohomological integrality for symmetric quotient stacks.
	\newblock ArXiv e-print (2024),
	\href{http://arXiv.org/abs/2408.15786v1}{\mbox{arXiv:2408.15786v1}.}
	
	\bibitem{Kinjo-decomposition_good_moduli}
	Tasuki Kinjo.
	\newblock Decomposition theorem for good moduli spaces.
	\newblock ArXiv e-print (2024),
	\href{http://arXiv.org/abs/2407.06160v1}{\mbox{arXiv:2407.06160v1}.}
	
	\bibitem{kirwan-surj}
	Frances Kirwan.
	\newblock Rational intersection cohomology of quotient varieties.
	\newblock {\em Invent. Math.}, 86(3):471--505, 1986.
	
	\bibitem{SumihiroEquivariantCompletion}
	Hideyasu Sumihiro.
	\newblock Equivariant completion.
	\newblock {\em J. Math. Kyoto Univ.}, 14:1--28, 1974.
	
	\bibitem{Woolf}
	Jonathan Woolf.
	\newblock The decomposition theorem and the intersection cohomology of
	quotients in algebraic geometry.
	\newblock {\em J. Pure Appl. Algebra}, 182(2-3):317--328, 2003.
	
	
	\bibitem{Quot-Gm}
	Xucheng Zhang.
	\newblock Proper good quotients for $\mathbf{G}_m$-actions.
	\newblock ArXiv e-print (2024),
	\href{http://arXiv.org/abs/2406.09912}{\mbox{arXiv:2406.09912}}.
	
\end{thebibliography}

\end{document}